\definecolor{darkblue}{rgb}{0.0, 0.0, 0.55}
\newcommand\N{\mathbb N}
\newcommand\R{\mathbb R}
\newcommand\C{\mathbb C}
\renewcommand\P{\mathbb P}
\newcommand\x{\ushort X}
\newcommand\al\alpha
\newcommand\be\beta
\newcommand\la\lambda
\newcommand\La\Lambda
\newcommand\De\Delta
\newcommand\Ph\Phi
\newcommand\Ps\Psi
\newcommand\ph\varphi
\newcommand\ps\psi
\newcommand\ze\zeta
\newcommand\rh\varrho
\newcommand\Om\Omega
\newcommand\ii{\mathbbm i}
\newcommand\dotcup{\mathbin{\dot\cup}}
\DeclareMathOperator\id{id}
\DeclareMathOperator\sgn{sgn}
\DeclareMathOperator\conv{conv}
\DeclareMathOperator\ev{ev}
\theoremstyle{definition}
\newtheorem{prop}{Proposition}[section]
\newtheorem{thm}[prop]{Theorem}
\newtheorem{cor}[prop]{Corollary}
\newtheorem{lem}[prop]{Lemma}
\newtheorem{df}[prop]{Definition}
\newtheorem{thmdf}[prop]{Theorem and Definition}
\newtheorem{rem}[prop]{Remark}
\newtheorem{ex}[prop]{Example}
\title[Optimization approaches to quadrature]{Optimization approaches to quadrature:\\
new characterizations of Gaussian quadrature on the line and
quadrature with few nodes on plane algebraic curves, on the plane and in higher dimensions}
\author[C. Riener]{Cordian Riener}
\address{Department of Mathematics and Statistics, Faculty of Science and Technology, University of Tromsø, 9037 Tromsø, Norway}
\address{Fachbereich Mathematik und Statistik, Universität Konstanz, 78457 Konstanz, Germany}
\email{cordian.riener@uit.no}
\author[M. Schweighofer]{Markus Schweighofer}
\address{Fachbereich Mathematik und Statistik, Universität Konstanz, 78457 Konstanz, Germany}
\email{markus.schweighofer@uni-konstanz.de}
\subjclass[2010]{Primary 65D32; Secondary 14H50, 14P05}
\date{August 2, 2017}
\keywords{quadrature, cubature, Gauss quadrature, Szegő quadrature, plane algebraic curves, truncated moment problem}
\begin{document}
\begin{abstract}
Let $d$ and $k$ be positive integers. Let $\mu$ be a positive Borel measure on $\R^2$ possessing finite moments up to degree $2d-1$. If the support of $\mu$ is contained in an algebraic curve of degree $k$, then we show that there exists a quadrature rule for $\mu$ with at most $dk$ many nodes all placed on the curve (and positive weights) that is exact on all polynomials of degree at most $2d-1$. This generalizes both Gauss and (the odd degree case of) Szegő quadrature where the curve is a line and a circle, respectively, to arbitrary plane algebraic curves. We use this result to show that, without any hypothesis on the support of $\mu$, there is always a cubature rule for $\mu$ with at most $\frac32d(d-1)+1$ many nodes. In both results, we show that the quadrature or cubature rule can be chosen such that its value on a certain positive definite form of degree $2d$ is minimized. We characterize the unique Gaussian quadrature rule on the line as the one that minimizes this value or several other values as for example the sum of the
nodes' distances to the origin. The tools we develop should prove useful for obtaining similar results in higher-dimensional cases although at the present stage we can present only partial results in that direction.
\end{abstract}
\maketitle
\tableofcontents

%********************************************************************************
% Section: Motivation
%********************************************************************************

\section{Motivation}\label{motivation}

\noindent
Initially designating the computation of areas and volumes, the terms \emph{quadrature} and \emph{cubature} now often stand for the numerical computation of
one-dimensional and two-dimensional integrals, respectively.  As a generic term for integrals over arbitrary dimension, the term \emph{quadrature} seems to be
more often used than \emph{cubature}. We will formally use both terms synonymously but use the latter term only when
the support of $\mu$ is a subset of $\R^2$.

\bigskip\noindent
By a \emph{measure} on $\R^n$ we always understand a nonnegative (i.e., unsigned) Borel measure on $\R^n$. Its \emph{support}
is the smallest closed subset of $\R^n$ whose complement has measure zero. Suppose $\mu$ is a measure on $\R^n$ and $f$ is a measurable
real valued function whose domain contains the support of $\mu$ and whose integral with respect to $\mu$ exists and is finite.
The aim is to compute the integral of $f$ numerically, i.e., the computation should be fast and should yield a good approximation of the actual
integral. Ideally, one should be able to have an error estimate for the approximation
and black box access to $f$ should be enough (in particular no information on
potentially existing derivatives or primitives of $f$ is needed).

\bigskip\noindent
A classical way of achieving this are \emph{quadrature rules} \cite{co1}.
They consist of finitely many points in $\R^n$ called \emph{nodes} together with associated real numbers
called \emph{weights}. The hope is that the weighted sum of the function values at the nodes approximates well the integral of $f$ with respect to $\mu$
(in the actual computation one has to deal with floating approximations, of course).
It is not indispensable but highly desirable that all weights are positive since this reflects the monotonicity of the integral, increases numerical stability and usually
allows for tighter error estimates \cite[Conclusion 3.19]{hac}.

\bigskip\noindent
We will therefore always insist on the weights to be positive.

\bigskip\noindent
Fix a nonnegative integer $d$ such that $\mu$ possesses finite moments up to degree $d$, i.e., all polynomials of degree $\le d$ have a finite integral with respect to $\mu$.
Then a quadrature rule for $\mu$ is often designed to yield the \emph{exact value} for the integral of an arbitrary polynomial of degree at most $d$ with respect to $\mu$.
In this way, for any function $f$ that can be well-approximated by a polynomial of degree at most $d$, simultaneously
\begin{itemize}
\item on the support of $\mu$ or at least where ``most of the mass of $\mu$ lies''
(i.e., on a measurable subset of the support of $\mu$ whose complement has reasonably small measure) and
\item on the nodes of the quadrature rule,
\end{itemize}
one and the same quadrature rule will give a good approximation for the integral.
In practice, one often works with small degree of exactness $d$ by splitting the domain of integration into many parts and integrating over each part separately.
One thus often has a good polynomial approximation of low or moderate degree with a good error analysis, e.g., by Taylor or Bernstein approximation.
Neither the subdivision (which can be adaptive to the problem) nor the error analysis is addressed in this article.

\bigskip\noindent
Note that one usually \emph{would not want to, could not, need not and does not} compute a polynomial approximation of $f$. Thus the real aim of a quadrature rule is not to
integrate polynomials which is an easy task anyway, as soon as the relevant moments of the measure are known. However, quadrature rules should be designed to
handle this easy task in the best possible way. That is why this article like a big part of the literature about quadrature is about integrating \emph{polynomials} with
quadrature rules.

\bigskip\noindent
Of course, it is important to have a small number of nodes to speed up the computation, especially when calculating nested
multiple integrals with respect to the same measure.

\bigskip\noindent
In addition, nodes that are far away from the origin should be evitated. Note that, due to the assumption on the
finiteness of the moments up to some degree, usually most of the mass of the measure cannot lie {too} far from the origin and therefore it seems unlikely that the given
function $f$ could be approximated at the same time on ``most of the mass of $\mu$ lies'' and at ``nodes that are far out''. Indeed, at first sight,
it seems reasonable to require that all nodes lie in the support of $\mu$. We will however give up on this frequently made requirement for one bad and several good reasons:
\begin{itemize}
\item The bad reason is that our proofs will in general not be able to guarantee the nodes to be contained in the support.
\item There are examples where the minimal number of nodes can only be reached if one does not insist on all
nodes being contained in the support, e.g., when one searches a quadrature rule of degree $6$
for the Lebesgue measure on the closed unit disk \cite[Theorem 1.1]{efp}.
\item For many important measures (e.g., the Gaussian measure on $\R^n$) the requirement to lie in the support is anyway empty and
it seems more important that the nodes are not too ``far out'', at least those with significant weight.
Our proofs will tend to ensure this by minimizing certain objective functions penalizing distant points, at least those with significant weight.
\item If some of the nodes lie outside the support, they still might lie close to where the mass of the measure is concentrated and there is a priori no obstacle to
approximate $f$ very well by a low degree polynomial simultaneously on both the support of $\mu$ and on the nodes (provided $f$ is defined on the nodes). 
\end{itemize}

\bigskip\noindent
Our main aim will be to show that for a given degree $d$ and a given measure $\mu$ on $\R^n$ having finite moments up to degree $d$, that
there exist quadrature rules with
two requirements:
\begin{itemize}
\item a small number of nodes and
\item certain quantities measuring how far the nodes are from most of the mass of the measure are minimized
\end{itemize}
Perhaps surprisingly, the two requirements seem not to be competing.
In fact, in our approach it even turns out that the second is key to the first.

%********************************************************************************
% Section: Basic ideas and tools
%********************************************************************************

\section{Basic ideas and tools}

\noindent
In this article, we will always be concerned with \emph{odd} degree $d$ (and we will therefore often write $2d-1$ instead of $d$). This is because the even case
is considerably more intricate (even though it seems also important for the applications). Although we are interested in quadrature rules exactly
integrating polynomials of
degree up to $2d-1$, in our proofs, polynomials of even degree $2d$ will be of outmost importance. One of the reasons for this is that polynomials in $n$ variables
of even degree can be seen not only as functions on $\R^n$ but also as functions on the bigger $n$-dimensional real projective space $\R\P^n$. One of the novelties in our
approach is that certain \emph{generalized quadrature rules} that are exact up to degree $2d$ will play a big role in our proofs.
Unlike classical quadrature rules they are allowed to have nodes in the hyperplane at infinity $\R\P^n\setminus\R^n$.
Such rules would in general not make any sense for non-polynomial functions for which one can hardly make sense of evaluating them at infinity. This will not be a problem,
since our penalty functions will finally prevent the nodes to lie in the infinitely far hyperplane.

\bigskip\noindent
This article will be mainly concerned with the question of how many nodes are necessary for measures on $\R^2$.
To the best of our knowledge, this is the first work giving non-trivial upper bounds for each fixed odd degree $2d-1$ of exactness and each measure on $\R^2$
with finite moments up to this degree (see Corollary \ref{curvecor} below).

\bigskip\noindent
Our basic strategy is as follows: Since we want to integrate only polynomials of degree $2d-1$ or less with respect to $\mu$, we keep from the measure only
the information on the moments of degree at most $2d-1$ (encoded for example in a truncated Riesz functional, see Definition \ref{dfqr}(b) below)
and perhaps some information on its support (encoded in a set $S$ which could for example be an algebraic curve in $\R^2$ containing the support of $\mu$).
Depending on this data, we set up two different types of optimization problems
over some encodings of generalized quadrature rules.
The objective is each time to minimize a certain penalty {function} that forces us to pay for nodes that are far from the origin, at least for
the ones with a not too small weight.
For the first type of optimization problems $(P_{L,S,f})$ on Page \ref{POP}, the generalized quadrature rules are hidden in
linear functionals on the vector space of polynomials of degree at most $2d$ and the penalty function could typically by the sum of the pure
moments of degree $2d$. For the second type of optimization problems $(Q_{L,f,m})$ on Page \ref{QOP}, the quadrature rules are almost directly encoded
and the penalty function could be for example the maximum distance to the origin of a node.

\bigskip\noindent
The amazing fact will then be that a certain clever handling of these optimization problems (which might involve iterated solving in the case of the first type
$(P_{L,S,f})$) will not only make sure that the nodes tend to be where we want them
to be for the applications (not too far from most of the mass of the measure, in particular not at infinity) but also we will obtain nonnegative polynomials with few real
zeros such that all nodes are contained
in these zeros. These polynomials will come out from the subtle duality theory of conic programming in the case of $(P_{L,S,f})$ and from a
nonsmooth Lagrange multiplier technique in the case of $(Q_{L,f,m})$.

\bigskip\noindent
Finally, all our upper bounds for the number of nodes will come from upper bounds on the number of real zeros of certain polynomials. We will use the deep results
\ref{petro} and \ref{petroo} of Petrovsky and Oleinik from real algebraic geometry, the variant \ref{newbezout} of Bézout's theorem and our taylor-made Lemma
\ref{lemma2} on the number of real roots of univariate polynomials subject to certain sign conditions.

%********************************************************************************
% Section: Notation and outline
%********************************************************************************

\section{Notation and outline}

\bigskip\noindent
Throughout the article, let $\N:=\{1,2,3,\dots\}$ and $\N_0$ denote the set of positive and nonnegative integers, respectively. Let
$\x:=(X_1,\dots,X_n)$ always denote a tuple of variables and write $\R[\x]:=\R[X_1,\dots,X_n]$ for the real algebra of polynomials
in these variables. Most of our results are concerned with the cases $n\in\{1,2\}$ in which we will often write $X$ instead of $X_1$ and
$Y$ instead of $X_2$. Unless otherwise stated, all vector spaces will be real. For any vector space $V$, we denote by $V^*$ its
dual vector space consisting of all linear forms on $V$.
For $d\in\N_0$ we write $\R[\x]_d$ for the vector space of polynomials of (total) degree at most $d$.

\begin{df}\label{dfqr}
\begin{enumerate}[(a)]
\item
Let $L\in\R[\x]_d^*$. A \emph{quadrature rule} for $L$ is a function $w\colon N\to\R_{>0}$ defined on a finite set
$N\subseteq\R^n$ such that
\[L(p)=\sum_{x\in N}w(x)p(x)\]
for all $p\in\R[\x]_d$. We call the elements of $N$ the \emph{nodes} of the quadrature rule.
\item
Let $\mu$ be a measure on $\R^n$ and $d\in\N_0$. Suppose that $\mu$ has finite moments up to degree $d$, in other words
the \emph{$d$-truncated Riesz functional}
\[L_{\mu,d}\colon\R[\x]_d\to\R,\ p\mapsto\int_{\R^n}p~d\mu\]
is well-defined. A \emph{quadrature rule} of degree $d$ for $\mu$ is a quadrature rule for $L_{\mu,d}$.
\end{enumerate}
\end{df}

\bigskip\noindent
As already explained, the overall hope is that we will automatically get a small number of nodes when we
minimize, over all quadrature rules, a certain cost function making nodes far from the origin with large weight
very expensive. With the exception of Section \ref{line}, the minimization will be done over the closure of the
cone of linear forms possessing a quadrature rules in the sense of Definition \ref{dfqr}(a) (see Section \ref{conic} and Proposition
\ref{momclo}). The closure has to be taken for technical reasons in order to
fit into the theoretical framework of conic optimization. Its elements are
described in Proposition \ref{exgqr} by generalized quadrature rules with ``nodes at infinity''. These nodes at infinity will, however, be
``optimized away'' by our cost function (see Proposition \ref{primsol}(b)).

\bigskip\noindent
In Section \ref{line}, we will pursue a different idea where we minimize really over the the data of the quadrature rules themselves
instead of the associated linear forms. This time the cost function (for which there are many choices, see the beginning of Section \ref{line})
will just punish nodes far away from the origin and not take into account
the weights since the latter are bounded anyway by the prescribed total mass of the measure. This approach currently seems to work
only for quadrature on the real line but gives new interesting insights on this classical problem.

\bigskip\noindent
The following theorem from \cite{bt} is fundamental for the existence of quadrature rules. For a nice proof see also
\cite[Theorem 5.8]{la1} and \cite[Theorem 5.9]{la2}. Special cases of this theorem have been proven earlier by
Tchakaloff \cite{tch} and Putinar \cite{put}.

\begin{thm}[Bayer and Teichmann]\label{bavaria}
Each measure $\mu$ on $\R^n$ with finite moments up to degree $r\in\N_0$ possesses a quadrature rule of degree $r$ with all nodes
contained in the support of $\mu$.
\end{thm}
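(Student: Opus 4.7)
The plan is to reduce the theorem to a finite-dimensional convex-geometry question via the evaluation map. I set $V := \R[\x]_r$ and $N := \dim V = \binom{n+r}{n}$, and consider the ``moment map'' $\phi\colon\R^n\to V^*$, $x\mapsto\ev_x$, where $\ev_x(p)=p(x)$. Then $L_{\mu,r}=\int\phi\,d\mu$ (the integral taken coordinate-wise in $V^*\cong\R^N$), and a quadrature rule of degree $r$ for $\mu$ with nodes in $K:=\supp(\mu)$ and positive weights is exactly a representation of $L_{\mu,r}$ as a finite nonnegative combination of elements of $\phi(K)$. Writing $C$ for the convex conic hull of $\phi(K)$ in $V^*$, it thus suffices to show $L_{\mu,r}\in C$; Carathéodory's theorem for conic hulls in $\R^N$ will then automatically bound the required number of nodes by $N$.

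First I would dispose of the case that $K$ is compact. Then $\phi(K)\subseteq V^*$ is compact and avoids $0$ (since $\ev_x(1)=1$), so $C$ is a closed convex cone. If $L_{\mu,r}\notin C$, Hahn-Banach separation supplies a $p\in V$ with $p(x)=\ev_x(p)\ge 0$ on all of $K$ yet $L_{\mu,r}(p)<0$; but $L_{\mu,r}(p)=\int_K p\,d\mu\ge 0$, a contradiction. Hence $L_{\mu,r}\in C$, and Carathéodory settles the compact case.

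For general $K$, I would exhaust by compact sets $K_j := K\cap\overline{B_j(0)}$, apply the compact case to each restriction $\mu_j := \mu|_{K_j}$ to obtain discrete measures $\nu_j=\sum_{i=1}^{N}w_{j,i}\delta_{x_{j,i}}$ (allowing zero weights) with $x_{j,i}\in K_j$ and $L_{\nu_j,r}=L_{\mu_j,r}$, and then pass to the limit $j\to\infty$. The total weights are uniformly bounded since $\sum_i w_{j,i}=\mu_j(\R^n)\le L_{\mu,r}(1)<\infty$, so after passing to a subsequence on the one-point compactification $\R^n\cup\{\infty\}$, each pair $(w_{j,i},x_{j,i})$ converges to some $(w_i^*,x_i^*)\in[0,\infty)\times(\R^n\cup\{\infty\})$. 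I expect the main obstacle to be ruling out ``mass at infinity'' that could spoil the higher-degree moments: the $\alpha=0$ coordinate forces $w_i^*=0$ whenever $x_i^*=\infty$, but for $|\alpha|>0$ the products $w_{j,i}x_{j,i}^\alpha$ might persist in the limit even as $w_{j,i}\to 0$ and $x_{j,i}\to\infty$. Handling this---either by a careful diagonal extraction along a finer exhaustion, or by recasting the problem as finding an extreme point of the convex set $\{\nu:L_{\nu,r}=L_{\mu,r}\}$ and using a perturbation argument against the $N$-dimensional space $V$ to force $\nu$ to have at most $N$ atoms---will be the technical heart of the argument; once accomplished, the surviving atoms $x_i^*\in K$ with $w_i^*>0$ provide the desired quadrature rule.
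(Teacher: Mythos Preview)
The paper does not give its own proof of this theorem; it is quoted from \cite{bt} with pointers to \cite{la1,la2} for alternative proofs. So there is nothing in the paper to compare against, and the question is simply whether your plan is sound.

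Your treatment of the compact case is correct and is essentially Tchakaloff's original argument: $\phi(K)$ is compact and meets $\{\ev(1)=1\}$, so the cone it generates is closed, separation plus Carath\'eodory finishes. The non-compact case, however, has a genuine gap that you yourself flag but do not close. The limiting scheme $\nu_j\to\nu$ can fail exactly for the reason you name: one may have $w_{j,i}\to 0$, $x_{j,i}\to\infty$ while $w_{j,i}x_{j,i}^\alpha$ stays bounded away from~$0$ for some $|\alpha|\ge 1$, and neither a ``finer diagonal extraction'' nor the $\alpha=0$ coordinate prevents this. Your fallback suggestion of taking an extreme point of $\{\nu:L_{\nu,r}=L_{\mu,r}\}$ needs a compactness argument (Krein--Milman) that is precisely what is missing.

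The actual short proof in \cite{bt,la1} bypasses the limiting argument entirely by observing that $\phi$ is a homeomorphism from $\R^n$ onto a \emph{closed} subset of $V^*$ (the coordinates $\ev_x(X_i)=x_i$ recover $x$), so $\supp(\phi_*\mu)=\phi(K)$ exactly. One then uses the elementary finite-dimensional lemma that the barycenter $\int y\,d\nu(y)$ of any probability measure $\nu$ on $\R^N$ with finite first moment lies in $\conv(\supp\nu)$ (not merely its closure): if not, separate, deduce $\supp\nu$ lies in a proper affine subspace, and induct on dimension. Applied to $\nu:=\phi_*\mu$ (after normalizing $\mu$ to a probability measure), this gives $L_{\mu,r}\in\conv(\phi(K))$ directly, with no compactness of $K$ required; Carath\'eodory then yields at most $N$ nodes. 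You were one observation---that $\phi$ is proper---away from a complete argument.
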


\noindent
Bayer and Teichmann give \[\binom{n+r}n=\binom{n+r}r=\dim\R[\x]_r=\dim\R[\x]_r^*\] as an upper bound on the number of nodes that is needed for finding such a quadrature rule \cite[Theorem 2]{bt}.
This bound is, however, trivial since it follows from Carathéodory's lemma applied to the vector space $\R[\x]_d^*$.
Recall that this lemma says that a conic (i.e., nonnegative linear) combination of
vectors in an $k$-dimensional vector space can always be written as a conic combination of $k$ of these vectors.
We will refer to this bound as the Carathéodory bound.

\noindent
This article will be concerned with variants of Theorem \ref{bavaria} mainly for $n\in\{1,2\}$ and exclusively for
odd degree $r=2d-1$ ($d\in\N$) with
much better bounds on the number of nodes on the expense of no longer insisting on the nodes being contained in the support of $\mu$.

\bigskip\noindent
For $n=1$, i.e., quadrature on the line, the above Carathéodory bound is $2d$. In Section \ref{curve},
we will reprove the classical bound of $d$ which
is often referred to as Gauss quadrature (see Theorem and Definition \ref{eindeutig} below) since
Gauss already treated the case where $\mu$ is the uniform measure on a compact interval \cite{g}.
More precisely, we show that minimizing the $2d$-th moment of the finitely supported measure corresponding to a quadrature rule
leads to at most $d$ nodes. This follows more or less already from the theory in the book of Dette and Studden \cite[Theorem 2.2.3]{ds} 
and has recently been rediscovered by Ryu and Boyd \cite{rb}, see our discussion in Section \ref{previous}.
However, our proof in Section \ref{curve} uses less machinery.

\bigskip\noindent
In Section \ref{line}, we complete our analysis in the case $n=1$ of Gaussian quadrature formulas
by a completely different technique that minimizes certain penalty functions depending only on the nodes
but not on the weights of the quadrature rules and again leaves at most $d$ non-zero weights.
Our result thus establishes new properties of the Gaussian quadrature rule used extensively in
applications that goes in the direction of the above mentioned desirable property of having nodes not too far from the origin.
At the same time a flattened version of our method suggested by Remark \ref{flattened} provides a new proof for the existence of the Gaussian
quadrature which could be included in an undergraduate textbook as a nice application of the Lagrange multiplier method.

\bigskip\noindent
For $n=2$, i.e., cubature on the plane, the above Carathéodory bound is $2d^2+d$. In Section \ref{plane}, we will prove the new bound of
$\frac32d^2-\frac32d+1=\frac32d(d-1)+1$. This will rely on a theorem of Petrovsky
on the topology of real algebraic curves as well as
a generalization of the mentioned Gauss quadrature from the line to any plane algebraic curve which we will prove in Section \ref{curve}:
Indeed, we will prove that if the support of $\mu$ is contained in a plane algebraic curve of degree $k$ then $dk$ nodes are sufficient, see
Corollary \ref{curvecor}.
The case of a circle has been proven before in \cite{jnt}.
The proof of this vast common generalization of Gauss and (the odd degree case of) Szegő quadrature
uses a strong version of Bézout's theorem on the number of intersection points of two plane algebraic curves.

%********************************************************************************
% Section: Cones of nonnegative polynomials, their duals and moments
%********************************************************************************

\section{Cones of nonnegative polynomials, their duals and moments}\label{cone}

\noindent
A finite dimensional vector space carries a unique vector space topology, i.e., a Hausdorff
topology making addition and scalar multiplication continuous \cite[§2, Section 3, Theorem 2]{bou}. It is induced by any norm or scalar product.
By a \emph{cone} $C$ in a vector space $V$ we always mean a convex cone, i.e., a subset containing the origin
that is closed under addition and under multiplication with nonnegative scalars: $0\in C$, $C+C\subseteq C$ and $\R_{\ge0}C\subseteq C$.
For a cone $C$ in vector space $V$, its \emph{dual cone}
$C^*:=\{L\in V^*\mid L(C)\subseteq\R_{\ge0}\}$ is a cone in $V^*$.

\bigskip\noindent
In this section, we study the cone of polynomials in $n$ variables of degree at most $2d$ nonnegative on a set $S\subseteq\R^n$
\[P_{2d}(S):=\{p\in\R[\x]_{2d}\mid\forall x\in S:p(x)\ge0\}\]
and its dual cone
\[P_{2d}(S)^*=\{L\in\R[\x]_{2d}^*\mid L(P_{2d}(S))\subseteq\R_{\ge0}\}.\]

\noindent
For $\al\in\N^n$, we denote $|\al|:=\al_1+\dots+\al_n$ and $\x^\al:=X_1^{\al_1}\dotsm X_n^{\al_n}$, the latter being called \emph{monomials}.
For a polynomial $p=\sum_\al a_\al\x^\al$ ($a_\al\in\R$), we denote by $\deg p$ its degree, i.e.,
$\deg p=\max\{|\al|\mid a_\al\ne0\}$ if $p\ne0$ and $\deg0=-\infty$.
Polynomials all of whose monomials have exactly
the same degree
$d\in\N_0$ are called $d$-forms. They form a finite-dimensional vector space which we denote by $\R[\x]_{=d}$, i.e.,
\[\R[\x]_{=d}:=\left\{\sum_{|\al|=d}a_\al\x^\al\mid a_\al\in\R\right\}\]
so that
\[\R[\x]_d=\R[\x]_{=0}\oplus\dots\oplus\R[\x]_{=d}.\]
For $p\in\R[\x]_d$, we denote by
$p_0\in\R[\x]_{=0},\dots,p_d\in\R[\x]_{=d}$ the unique polynomials satisfying $p=p_0+\ldots+p_d$, and we call $p_k$ the
\emph{$k$-th homogeneous part} of $p$.
If in addition $\deg p=d$ (i.e., $p_d\ne0$), then $p_d$ is called the \emph{leading form} of $p$.

\bigskip\noindent
It is natural to see elements of $\R[\x]_{=2d}$ as functions on the $(n-1)$-dimensional real projective space
\[\R\P^{n-1}=\{\{-y,y\}\mid y\in S^{n-1}\}\] which arises by identifying antipodal points of
the $(n-1)$-dimensional unit sphere \[S^{n-1}:=\{y\in\R^n\mid\|y\|=1\}\] and carries the corresponding quotient topology induced from the sphere.
For a point $x=\{-y,y\}\in\R\P^{n-1}$ and
$p\in\R[\x]_{=2d}$, we write therefore
\[p(x):=p(y)=p(-y).\]
Note that $p(-y)=p(y)$ for $y\in\R^n$ since, more generally, 
$p(\la y)=\la^{2d}p(y)$ for all $\la\in\R$ and $y\in\R^n$.

\bigskip\noindent
In order to study $P_{2d}(S)$ and $P_{2d}(S)^*$ for $S\subseteq\R^n$, it turns out to be useful to consider first its homogeneous analogs
\[P_{=2d}(S):=\{p\in\R[\x]_{=2d}\mid\forall x\in S:p(x)\ge0\}\]
and
\[P_{=2d}(S)^*=\{L\in\R[\x]_{=2d}^*\mid L(P_{=2d}(S))\subseteq\R_{\ge0}\}\]
for $S\subseteq\R\P^{n-1}$.

\begin{df}\label{dfhqr}
Let $L\in\R[\x]_{=2d}^*$. A \emph{quadrature rule} for $L$ is a function $w\colon N\to\R_{>0}$ defined on a finite set
$N\subseteq\R\P^{n-1}$ such that
\[L(p)=\sum_{x\in N}w(x)p(x)\]
for all $p\in\R[\x]_{=2d}$. We call the elements of $N$ again the \emph{nodes}.
\end{df}

\noindent
The following lemma is probably well-known at least in the case $S=\R\P^{n-1}$ \cite[Theorem 3.7]{rez} but we include a proof for the convenience of the reader.

\begin{lem}\label{exhqr}
Let $S\subseteq\R\P^{n-1}$.
Each element of $P_{=2d}(S)^*$ possesses a quadrature rule with all nodes contained in the closure $\overline S$ of $S$.
\end{lem}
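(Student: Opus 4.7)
My plan is to obtain the quadrature rule by applying the bipolar theorem to the closed convex cone of evaluation functionals on $\overline S$.

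Concretely, for each $x\in\overline S$ let $\ev_x\in\R[\x]_{=2d}^*$ denote the evaluation $p\mapsto p(x)$ (well-defined because $2d$ is even). Set
\[
C:=\left\{\sum_{i=1}^{k}w_i\ev_{x_i}\ \middle|\ k\in\N_0,\ x_i\in\overline S,\ w_i\in\R_{\ge 0}\right\}\subseteq\R[\x]_{=2d}^*.
\]
This is the cone of linear forms admitting a quadrature rule (allowing empty sums, i.e.\ $0\in C$) with nodes in $\overline S$. What I want to show is precisely $P_{=2d}(S)^*\subseteq C$.

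The first step is to check that $C$ is closed. Note that $\overline S$ is a closed subset of the compact space $\R\P^{n-1}$, hence compact, and the map $\overline S\to\R[\x]_{=2d}^*$, $x\mapsto\ev_x$, is continuous, so the image $E:=\{\ev_x\mid x\in\overline S\}$ is compact. Its convex hull $\conv E$ is compact in the finite-dimensional space $\R[\x]_{=2d}^*$. Moreover $0\notin\conv E$: the polynomial $q:=X_1^{2d}+\dots+X_n^{2d}\in\R[\x]_{=2d}$ satisfies $q(x)>0$ for every $x\in\R\P^{n-1}$, so $\ell(q)>0$ for every $\ell\in\conv E$. Since $C=\R_{\ge0}\,\conv E$ and a compact convex set avoiding the origin generates a closed cone, $C$ is closed.

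The second step is to compute the dual cone $C^*\subseteq\R[\x]_{=2d}$:
\[
C^*=\{p\in\R[\x]_{=2d}\mid\forall x\in\overline S:p(x)\ge 0\}=P_{=2d}(\overline S)=P_{=2d}(S),
\]
where the last equality uses that $p\in\R[\x]_{=2d}$ is continuous on $\R\P^{n-1}$, so nonnegativity on $S$ is equivalent to nonnegativity on $\overline S$.

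Finally, since $C$ is a closed convex cone in the finite-dimensional space $\R[\x]_{=2d}^*$, the bipolar theorem gives $C=C^{**}$. Taking duals in the identification $\R[\x]_{=2d}^{**}=\R[\x]_{=2d}$ we obtain
\[
C=C^{**}=P_{=2d}(S)^*.
\]
Hence every $L\in P_{=2d}(S)^*$ can be written as $L=\sum_{i=1}^{k}w_i\ev_{x_i}$ with $x_i\in\overline S$ and $w_i>0$ (dropping terms of weight zero and merging coincident nodes), which is exactly a quadrature rule for $L$ with nodes in $\overline S$.

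The step I expect to cost the most care is the verification that $C$ is closed; once this is in place, the bipolar theorem and the continuity remark that $P_{=2d}(S)=P_{=2d}(\overline S)$ deliver the result immediately.
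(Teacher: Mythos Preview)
Your proof is correct and follows the same bipolar strategy as the paper: define the cone $C$ of linear forms admitting a quadrature rule with nodes in $\overline S$, check $C^*=P_{=2d}(S)$, establish that $C$ is closed, and conclude $C=C^{**}=P_{=2d}(S)^*$. The only difference is in the closedness step, where the paper argues directly by Carath\'eodory and Bolzano--Weierstra{\ss} on the node--weight data (bounding the weights via $L_m((X_1^2+\dots+X_n^2)^d)$), whereas you invoke the cleaner fact that the cone over a compact convex set not containing the origin is closed; both routes rest on the same two ingredients, namely compactness of $\overline S\subseteq\R\P^{n-1}$ and strict positivity of a $2d$-form such as $X_1^{2d}+\dots+X_n^{2d}$.
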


\begin{proof}
Clearly,
\begin{align*}
C:=\{L\in\R[\x]_{=2d}^*\mid~&\text{$L$ possesses a quadrature rule}\\
&\text{with all nodes contained in $\overline S$}\}
\end{align*}
is a cone and for its (pre-)dual $C^*$ we get
\begin{align*}
C^*&=\{p\in\R[\x]_{=2d}\mid\forall L\in C:L(p)\ge0\}\\
&=\{p\in\R[\x]_{=2d}\mid\forall x\in\overline S:p(x)\ge0\}\\
&=\{p\in\R[\x]_{=2d}\mid\forall x\in S:p(x)\ge0\}=P_{=2d}(S).
\end{align*}
Therefore $P_{=2d}(S)^*$ is the double dual $C^{**}$ of $C$ which equals $C$ \cite[Corollary 3.2.2 and Page 66]{ren} provided $C$ is closed.
We are thus reduced to showing that
$C$ is closed. Just as in the discussion after Theorem \ref{bavaria}, the weights in a quadrature rule for a linear form on $\R[\x]_{=2d}$
can be modified by Carathéodory's lemma so that at most $k:=\dim\R[\x]_{=2d}$ of its nodes are used. In particular, we can represent such
a quadrature rule by a matrix with $n+1$ rows and $k$ columns where each non-zero column encodes a node in $\R\P^{n-1}$ (represented by a vector in $S^{n-1}$) together with its weight. Now let $(L_m)_{m\in\N}$ be a sequence in $C$ converging
in $\R[\x]_{=2d}^*$. We have to show that its limit $L$ is again contained in $C$. Choose for each $m\in\N$ a
matrix $Q_m\in\R^{(n+1)\times k}$ representing a quadrature rule for $L_m$ with all nodes contained in $\overline S$.
The sequence $(Q_m)_{m\in\N}$ is bounded since the weights
appearing in a quadrature rule for $L_m$ cannot exceed $L_m((X_1^2+\dots+X_n^2)^d)$. By the
Bolzano-Weierstrass theorem, $(Q_m)_{m\in\N}$ possesses a convergent subsequence. By passing to the corresponding subsequence
of $(L_m)_{m\in\N}$
we can assume that $(Q_m)_{m\in\N}$ converges. Its limit $Q\in\R^{(n+1)\times k}$ represents a quadrature rule for $L$ with all nodes
contained in $\overline S$. Hence $L\in C$ as desired.
\end{proof}

\noindent
In the following, we identify $\R^n$ and $\R\P^{n-1}$ with a subset of $\R\P^n$ via the injective maps
\begin{align*}
\R^n\hookrightarrow\R\P^n,&\ x\mapsto\{\widetilde{(1,x)},-\widetilde{(1,x)}\}\\
\R\P^{n-1}\hookrightarrow\R\P^n,&\ \{x,-x\}\mapsto\{(0,x),(0,-x)\}
\end{align*}
where $\tilde x:=\frac x{\|x\|}\in S^n$ for $x\in\R^{n+1}\setminus\{0\}$. Under this identification, $\R\P^n$ is the disjoint union of its open subset
$\R^n$ and its closed subset $\R\P^{n-1}$. Moreover, both $\R^n$ and $\R\P^{n-1}$ are topological subspaces of $\R\P^n$. We call $\R\P^{n-1}$ the hyperplane at infinity and its elements the points at infinity of $\R\P^n$.
The \emph{projective closure} of a set $S\subseteq\R^n$ is its closure in $\R\P^n$. We denote the intersection of the projective closure of
$S\subseteq\R^n$ with the hyperplane at infinity $\R\P^{n-1}$ by $S_\infty$ and call it the \emph{the infinite reach} of $S$.
For each polynomial $p=\sum_{|\al|\le d}a_\al\x^\al\in\R[\x]_d$\quad($a_\al\in\R$), we denote by
\[p_{[d]}:=X_0^dp\left(\frac{X_1}{X_0},\dots,\frac{X_n}{X_0}\right)=\sum_{|\al|\le d}a_\al X_0^{d-|\al|}\x^\al\in\R[X_0,\x]_{=d}\]
its \emph{$d$-homogenization}. If $S$ is a closed subset of $\R^n$, then its projective closure is obviously $S\cup S_\infty$.

\begin{rem}\label{skal}
Let $x\in\R^n$ and $p\in\R[\x]_{2d}$. If we consider $x$ as an element of $\R\P^n$ (i.e., we replace $x$ by $\{\widetilde{(1,x)},-\widetilde{(1,x)}\}$)
and $p$ as a function on $\R\P^n$ (i.e., we replace $p$ by $p_{[2d]}$), then the evaluation of $p$ at $x$ undergoes a scaling with a
certain positive factor:
\[
p_{[2d]}\left(\{\widetilde{(1,x)},-\widetilde{(1,x)}\}\right)=p_{[2d]}\left(\frac{(1,x)}{\|(1,x)\|}\right)=\frac{p_{[2d]}(1,x)}{\|(1,x)\|^{2d}}=\frac{p(x)}{(1+\|x\|^2)^{d}}.
\]
\end{rem}

\begin{df}\label{dfgqr}
Let $L\in\R[\x]_{2d}^*$. A \emph{generalized quadrature rule} for $L$ is a function $w\colon N\to\R_{>0}$
defined on a finite set $N\subseteq\R\P^{n}$ such that
\[L(p)=\sum_{x\in N}w(x)p_{[2d]}(x)\]
or equivalently by Remark \ref{skal}
\[L(p)=\sum_{x\in N\cap\R^n}\frac{w(x)}{(1+\|x\|^2)^d}p(x)+\sum_{x\in N\cap\R\P^{n-1}}w(x)p_{2d}(x).\]
for all $p\in\R[\x]_{2d}$.
We call the elements of $N$, $N\cap\R^n$ and $N\cap\R\P^{n-1}$ the \emph{nodes}, the \emph{regular nodes} and
the \emph{nodes at infinity}, respectively.
\end{df}

\noindent
As discussed in Section \ref{motivation}, from numerical analysis there is a tendency that nodes of quadrature rules should not be far
away from the origin.
From this viewpoint, nodes at infinity are highly undesired. However, we will need them as a theoretical concept in the proofs even though we will
take care that they will finally not appear.

\begin{prop}\label{exgqr}
Let $S\subseteq\R^n$. Then
\begin{align*}
P_{2d}(S)^*=\{L\in\R[\x]_{2d}^*\mid&\text{$L$ possesses a generalized quadrature rule}\\
&\text{with all nodes contained in the projective closure of $S$}\}.
\end{align*}
\end{prop}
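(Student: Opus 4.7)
The plan is to reduce the claim to Lemma \ref{exhqr} in one higher dimension (in the $n+1$ variables $X_0,\x$), using $2d$-homogenization as the bridge between the affine and projective settings. Explicitly, I would work with the linear isomorphism
\[\Ph\colon\R[\x]_{2d}\to\R[X_0,\x]_{=2d},\ p\mapsto p_{[2d]},\]
whose inverse is substitution $X_0\mapsto 1$.

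The first and key step is to identify $\Ph(P_{2d}(S))$ with $P_{=2d}(\overline S)$, where $\overline S\subseteq\R\P^n$ denotes the projective closure of $S$. For $x\in S\subseteq\R^n\subseteq\R\P^n$, Remark \ref{skal} says that $p_{[2d]}(x)$ and $p(x)$ differ only by the positive factor $(1+\|x\|^2)^{-d}$, so $p\ge 0$ on $S\subseteq\R^n$ if and only if $p_{[2d]}\ge 0$ on $S\subseteq\R\P^n$; continuity of the $2d$-form $p_{[2d]}$ on $\R\P^n$ then extends this to nonnegativity on $\overline S=S\cup S_\infty$.

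Dualizing, the adjoint $\Ph^*\colon\tilde L\mapsto\tilde L\circ\Ph$ is an isomorphism $\R[X_0,\x]_{=2d}^*\to\R[\x]_{2d}^*$ carrying $P_{=2d}(\overline S)^*$ bijectively onto $P_{2d}(S)^*$. Lemma \ref{exhqr}, applied in $n+1$ variables to the already-closed set $\overline S\subseteq\R\P^n$, equips every $\tilde L\in P_{=2d}(\overline S)^*$ with a homogeneous quadrature rule $w\colon N\to\R_{>0}$, $N\subseteq\overline S$, satisfying $\tilde L(q)=\sum_{x\in N}w(x)q(x)$ for all $q\in\R[X_0,\x]_{=2d}$. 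Specializing to $q=p_{[2d]}$ turns this, via $L:=\Ph^*\tilde L$, into
\[L(p)=\tilde L(p_{[2d]})=\sum_{x\in N}w(x)p_{[2d]}(x)\qquad(p\in\R[\x]_{2d}),\]
which is precisely the defining identity of a generalized quadrature rule in the sense of Definition \ref{dfgqr}, with nodes in the projective closure of $S$. The reverse inclusion runs in the opposite direction through the same equation.

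The main obstacle is the identification $\Ph(P_{2d}(S))=P_{=2d}(\overline S)$; this is the conceptual step where the nodes at infinity $S_\infty$ are forced to be admissible, since continuity of $p_{[2d]}$ on $\R\P^n$ enlarges the nonnegativity constraint from $S$ to $\overline S$. Everything else is a formal translation between the homogeneous picture of Definition \ref{dfhqr} and the generalized quadrature rules of Definition \ref{dfgqr}, together with the observation that Lemma \ref{exhqr} is stated precisely so as to accommodate this bookkeeping once we replace $n$ by $n+1$.
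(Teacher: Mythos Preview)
Your proposal is correct and follows essentially the same route as the paper: both pass to $n+1$ variables via $2d$-homogenization, identify $L\in P_{2d}(S)^*$ with a functional on $\R[X_0,\x]_{=2d}$ lying in $P_{=2d}(S)^*$, and invoke Lemma \ref{exhqr} to obtain a quadrature rule with nodes in the projective closure. You are simply more explicit about the cone identification $\Ph(P_{2d}(S))=P_{=2d}(\overline S)$ than the paper, which hides this behind the phrase ``one easily checks''.
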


\begin{proof}
The inclusion ``$\supseteq$'' is trivial. To show the other inclusion,
let $L\in P_{2d}(S)^*$. One easily checks that
\[L_0\colon\R[X_0,\x]_{=2d}\to\R,\ p\mapsto L(p(1,X_1,\dots,X_n))\]
lies in $P_{=2d}(S)^*$ and therefore by Lemma \ref{exhqr} possesses a quadrature rule \[w\colon N\to\R_{>0}\] with
$N$ contained in the projective closure of $S$.
Now \[L(p)=L_0(p_{[2d]})=\sum_{x\in N}w(x)p_{[2d]}(x)\] for all $p\in\R[\x]_d$ by Definition \ref{dfhqr}. According to Definition \ref{dfgqr} this
means that $w$ is a generalized quadrature rule for $L$.
\end{proof}

\begin{rem}\label{intdua}
The interior of a cone $C$ in a finite-dimensional vector space consists exactly of those $x\in C$ such that
$L(x)>0$ for all $L\in C^*\setminus\{0\}$, confer for example \cite[Exercise 2.31(d)]{bv}.
\end{rem}

\begin{prop}\label{innerer}
Let $S\subseteq\R^n$ be closed. Then the interior of $P_{2d}(S)$ consists exactly of those $f\in\R[\x]_{2d}$ such that $f_{2d}(x)>0$ for all
$x\in S_\infty$ and $f(x)>0$ for all $x\in S$.
\end{prop}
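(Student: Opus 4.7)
The plan is to combine Remark \ref{intdua}, which identifies the interior of a cone with the set of points on which every nonzero dual element is strictly positive, together with Proposition \ref{exgqr}, which describes elements of $P_{2d}(S)^*$ as linear forms admitting generalized quadrature rules with nodes in the projective closure of $S$. Since $S$ is closed, this projective closure equals $S\cup S_\infty$, so the regular nodes of any such rule lie in $S$ and the nodes at infinity lie in $S_\infty$.

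For necessity, suppose $f$ lies in the interior of $P_{2d}(S)$. I would test $f$ against two families of nonzero elements of $P_{2d}(S)^*$ built from single-node generalized quadrature rules. For each $x\in S$, the rule with regular node $x$ and weight $(1+\|x\|^2)^d$ gives the functional $L(p)=p(x)$, so Remark \ref{intdua} forces $f(x)>0$. For each $x\in S_\infty$, the rule with a single node at infinity $x$ and weight $1$ gives the functional $L(p)=p_{2d}(x)$, so Remark \ref{intdua} forces $f_{2d}(x)>0$. Both test functionals are visibly nonzero, for instance because they send $1$ and $(X_1^2+\dots+X_n^2)^d$, respectively, to positive values.

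For sufficiency, assume conversely that $f(x)>0$ for all $x\in S$ and $f_{2d}(x)>0$ for all $x\in S_\infty$. Let $L\in P_{2d}(S)^*\setminus\{0\}$. Proposition \ref{exgqr} supplies a generalized quadrature rule $w\colon N\to\R_{>0}$ for $L$ with $N\subseteq S\cup S_\infty$ nonempty. Expanding via Definition \ref{dfgqr},
\[
L(f)=\sum_{x\in N\cap\R^n}\frac{w(x)}{(1+\|x\|^2)^d}f(x)+\sum_{x\in N\cap\R\P^{n-1}}w(x)f_{2d}(x),
\]
each summand is strictly positive (and at least one appears), so $L(f)>0$. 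By Remark \ref{intdua} this places $f$ in the interior of $P_{2d}(S)$.

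There is no genuine obstacle; the statement is essentially a direct composition of Remark \ref{intdua}, Proposition \ref{exgqr}, and the evaluation formula in Definition \ref{dfgqr}. The only subtlety is checking that the single-node test functionals used in the necessity direction are truly nonzero, which is immediate.
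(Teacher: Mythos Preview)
Your proof is correct and follows essentially the same route as the paper: reduce the characterization of the interior to Remark \ref{intdua}, and then use Proposition \ref{exgqr} (with $S$ closed so the projective closure is $S\cup S_\infty$) to translate the condition $L(f)>0$ for all nonzero $L\in P_{2d}(S)^*$ into the pointwise positivity conditions on $f$ and $f_{2d}$. The paper compresses this into two sentences, whereas you have spelled out both directions explicitly with single-node test functionals for necessity and the expanded quadrature sum for sufficiency.
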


\begin{proof}
Using Proposition \ref{exgqr}, the statement is easily seen to be equivalent to the following: The interior of $P_{2d}(S)$ consists exactly of
those $f\in\R[\x]_{2d}$ satisfying $L(f)>0$ for all $L\in P_{2d}(S)^*\setminus\{0\}$.
This is clear from the general theory of cones by Remark \ref{intdua}.
\end{proof}

\noindent
Recall that a cone $C$ is called \emph{pointed} if it does not contain a one-dimensional subspace, or equivalently $C\cap -C=\{0\}$.

\begin{prop}\label{spitz}
Let $S\subseteq\R^n$.
$P_{2d}(S)$ has non-empty interior and  $P_{2d}(S)^*$ is pointed.
\end{prop}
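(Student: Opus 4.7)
The plan is to establish the two claims in sequence, with the non-emptiness of the interior of $P_{2d}(S)$ making the pointedness of $P_{2d}(S)^*$ essentially automatic.

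First I would reduce to the case that $S$ is closed. Since polynomials are continuous on $\R^n$, a polynomial is nonnegative on $S$ if and only if it is nonnegative on $\overline S$. Hence $P_{2d}(S)=P_{2d}(\overline S)$, and both assertions are insensitive to replacing $S$ by its closure. Moreover, $S$ and $\overline S$ share the same projective closure in $\R\P^n$ and hence the same infinite reach $S_\infty$, so Proposition \ref{innerer} remains literally applicable.

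Assuming $S$ closed, I would exhibit an explicit interior point of $P_{2d}(S)$: take $f:=(1+X_1^2+\dots+X_n^2)^d\in\R[\x]_{2d}$. Then $f(x)=(1+\|x\|^2)^d\ge 1>0$ for every $x\in S$, and its $2d$-homogeneous part is $f_{2d}=(X_1^2+\dots+X_n^2)^d$, which, viewed on $\R\P^{n-1}$ via the sphere model, is strictly positive everywhere, in particular on $S_\infty$. By Proposition \ref{innerer} this puts $f$ in the interior of $P_{2d}(S)$.

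For the pointedness of $P_{2d}(S)^*$ I would use the standard principle that a convex cone $C$ with non-empty interior in a finite-dimensional vector space $V$ satisfies $C-C=V$. Indeed, if $p$ lies in the interior of $C$, then $p+U\subseteq C$ for some open neighborhood $U$ of $0$; given any $v\in V$, choose $\la>0$ small enough that $\la v\in U$, so that $\la v=(p+\la v)-p\in C-C$, whence $v\in C-C$ because $C-C$ is closed under multiplication by positive scalars. Applying this to $C=P_{2d}(S)$, any $L\in P_{2d}(S)^*\cap(-P_{2d}(S)^*)$ vanishes on $P_{2d}(S)$ and hence on the full space $\R[\x]_{2d}=P_{2d}(S)-P_{2d}(S)$, so $L=0$.

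There is no substantial obstacle here: Proposition \ref{innerer} does the real work for the interior claim, and the passage from non-empty interior to pointedness of the dual is purely formal cone duality. The only minor care point is the reduction to closed $S$, which is handled by continuity.
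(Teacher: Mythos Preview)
Your proof is correct and follows essentially the same route as the paper: reduce to closed $S$, invoke Proposition~\ref{innerer} for the interior claim, and use the standard implication ``non-empty interior $\Rightarrow$ pointed dual'' for the second claim. The only difference is cosmetic: you make explicit the interior point $f=(1+X_1^2+\dots+X_n^2)^d$ and spell out the $C-C=V$ argument, whereas the paper leaves the former implicit and cites the latter as a standard exercise.
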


\begin{proof}
Replacing $S$ by its closure in $\R^n$ obviously does not change $P_{2d}(S)$. Assume therefore that $S$ is closed.
The first statement follows immediately from Proposition \ref{innerer} and the second follows from the first \cite[Exercise 2.31(e)]{bv}.
\end{proof}

\noindent
Let $S\subseteq\R^n$.
Note that $P_{2d}(S)$ is not necessarily pointed as the subspace
\[V_{2d}(S):=P_{2d}(S)\cap-P_{2d}(S)=\{f\in\R[\x]_{2d}\mid\forall x\in S:f(x)=0\}\]
of polynomials vanishing on $S$ might be nontrivial if $S$ has no interior. Since
$V_{2d}(S)$ is a linear subspace of $\R[\x]_{2d}$, it is in particular a cone and as such its dual cone is obviously
the subspace
\[V_{2d}(S)^*=\{L\in\R[\x]_{2d}\mid\forall p\in V_{2d}(S):L(p)=0\}\]
of $\R[\x]_{2d}^*$.

\begin{prop}\label{hull}
Let $S\subseteq\R^n$. In $\R[\x]_{2d}^*$, the linear subspace generated by the cone $P_{2d}(S)^*$ equals
$V_{2d}(S)^*$.
\end{prop}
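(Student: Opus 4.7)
The plan is to derive the equality by taking annihilators in the finite-dimensional pairing between $\R[\x]_{2d}$ and $\R[\x]_{2d}^*$ and invoking the bipolar theorem for closed convex cones.

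For the inclusion ``$\subseteq$'', I would observe that $V_{2d}(S)$ is a linear subspace contained in $P_{2d}(S)$, so any $L \in P_{2d}(S)^*$ satisfies both $L(p) \ge 0$ and $L(-p) \ge 0$ for every $p \in V_{2d}(S)$. Hence $L$ vanishes on $V_{2d}(S)$, i.e., $L \in V_{2d}(S)^*$. Since $V_{2d}(S)^*$ is already a linear subspace of $\R[\x]_{2d}^*$, the linear span of $P_{2d}(S)^*$ is likewise contained in $V_{2d}(S)^*$.

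For ``$\supseteq$'', I would first note that $P_{2d}(S)$ is closed in $\R[\x]_{2d}$ because it is the intersection over $x \in S$ of the closed halfspaces $\{p \mid p(x) \ge 0\}$. I would then compute the annihilator of the linear span of $P_{2d}(S)^*$ inside $\R[\x]_{2d}$ directly: a polynomial $p$ lies in this annihilator exactly when $L(p) = 0$ for every $L \in P_{2d}(S)^*$, equivalently when both $p \in P_{2d}(S)^{**}$ and $-p \in P_{2d}(S)^{**}$. The bipolar theorem for closed convex cones in finite-dimensional spaces gives $P_{2d}(S)^{**} = P_{2d}(S)$, so this annihilator equals $P_{2d}(S) \cap -P_{2d}(S) = V_{2d}(S)$. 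Taking annihilators once more and using that every subspace of a finite-dimensional space coincides with its own double annihilator (the span of $P_{2d}(S)^*$ being automatically closed), I conclude that the linear span of $P_{2d}(S)^*$ equals $V_{2d}(S)^\perp$, which is precisely the subspace denoted $V_{2d}(S)^*$ in the statement.

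I do not foresee a serious obstacle: the argument is pure cone duality, and closedness of $P_{2d}(S)$ is transparent. The only mild subtlety is keeping the two meanings of the star apart—the dual cone on $P_{2d}(S)$ and the annihilator on the subspace $V_{2d}(S)$—and exploiting that for a linear subspace these two notions agree.
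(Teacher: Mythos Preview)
Your argument is correct and is essentially the paper's own proof: both reduce the equality to the bipolar theorem for the closed cone $P_{2d}(S)$, the paper by invoking the general identity $(C\cap D)^*=\overline{C^*+D^*}$ with $C=P_{2d}(S)$ and $D=-P_{2d}(S)$, while you unpack that identity by hand via annihilators. Your explicit verification of the inclusion ``$\subseteq$'' is actually redundant once you have computed the annihilator of the span and applied double-annihilation, but it does no harm.
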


\begin{proof}
The following is a well-known exercise from the theory of cones:
If $C$ and $D$ are closed cones in a finite-dimensional vector space, then $(C\cap D)^*$ is the closure of $C^*+D^*$.
If we set $C:=P_{2d}(S)$ and $D:=-P_{2d}(S)$ then
\[C^*+D^*=P_{2d}(S)^*+(-P_{2d}(S))^*=P_{2d}(S)^*-P_{2d}(S)^*\]
is a subspace and therefore closed.
\end{proof}

\noindent
For $d\in\N_0$ and $S\subseteq\R^n$, we call the cone
\begin{align*}
M_{d}(S):=\{L_{\mu,d}\mid~&\text{$\mu$ {(nonnegative)} measure on $\R^n$ with support contained in $S$}\\
&\text{and finite moments up to degree $d$}\}
\end{align*}
the \emph{$d$-truncated $S$-moment cone}. Since integrals of nonnegative functions are nonnegative, we have
$M_{2d}(S)\subseteq P_{2d}(S)^*$.
The Bayer-Teichmann Theorem \ref{bavaria} says that
\[M_{d}(S)=\{L\in\R[\x]_d^*\mid\text{$L$ has a quadrature rule with all nodes contained in $S$}\}.\]
In the special case where $S$ is compact and $d$ is even this follows also readily from Proposition \ref{exgqr}.

\begin{prop}\label{momclo}
Let $S\subseteq\R^n$.
The closure of $M_{2d}(S)$ is $P_{2d}(S)^*$.
\end{prop}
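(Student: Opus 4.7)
The inclusion $\overline{M_{2d}(S)}\subseteq P_{2d}(S)^*$ is automatic, since $P_{2d}(S)^*$, being the intersection of the closed half-spaces $\{L\mid L(p)\ge0\}$ as $p$ ranges over $P_{2d}(S)$, is closed. For the reverse inclusion the plan is to exhibit every $L\in P_{2d}(S)^*$ as a limit of elements of $M_{2d}(S)$ by invoking Proposition \ref{exgqr}: it provides a generalized quadrature rule for $L$ with all nodes in the projective closure of $S$, so $L$ decomposes as a finite nonnegative linear combination of elementary functionals of two types, namely $p\mapsto\al p(x)$ with $\al>0$ and $x$ in the closure of $S$ in $\R^n$, and $p\mapsto\be p_{2d}(y)$ with $\be>0$ and $y\in S_\infty$. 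Because $M_{2d}(S)$ is a cone, so is its closure, and it therefore suffices to approximate each elementary summand individually by point-mass measures supported in $S$.

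For a regular summand $p\mapsto\al p(x)$ with $x$ in the closure of $S$, I would pick $x_m\in S$ with $x_m\to x$; then the point masses $\al\delta_{x_m}$ belong to $M_{2d}(S)$, and since polynomial evaluation depends continuously on the evaluation point on the finite-dimensional space $\R[\x]_{2d}$, the truncated Riesz functionals $L_{\al\delta_{x_m},2d}$ converge to $p\mapsto\al p(x)$ in $\R[\x]_{2d}^*$.

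The crux is approximating an infinite summand $p\mapsto\be p_{2d}(y)$ with $y=\{\tilde z,-\tilde z\}\in S_\infty$ for some $\tilde z\in S^{n-1}$. By definition of $S_\infty$ I can pick $z_m\in S$ with $\|z_m\|\to\infty$ and $z_m/\|z_m\|\to\pm\tilde z$, and the right choice of approximant is
\[\mu_m:=\frac{\be}{\|z_m\|^{2d}}\delta_{z_m}\in M_{2d}(S),\]
since decomposing $p=p_0+\dots+p_{2d}$ into its homogeneous parts yields
\[L_{\mu_m,2d}(p)=\frac{\be}{\|z_m\|^{2d}}p(z_m)=\be\sum_{k=0}^{2d}\|z_m\|^{k-2d}p_k\!\left(\frac{z_m}{\|z_m\|}\right),\]
where all terms with $k<2d$ vanish in the limit $m\to\infty$ while the $k=2d$ term converges to $\be p_{2d}(\pm\tilde z)=\be p_{2d}(y)$ (the sign being irrelevant because $p_{2d}$ is homogeneous of even degree). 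Summing these approximating sequences over all nodes of the generalized quadrature rule then produces a sequence in the cone $M_{2d}(S)$ converging to $L$. The main obstacle is precisely identifying this scaling factor $\be/\|z_m\|^{2d}$: it is exactly what is needed to kill the lower homogeneous parts of $p$ while recovering the leading form at infinity in the limit, which is really the content of Remark \ref{skal} in disguise.
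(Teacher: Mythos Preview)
Your proof is correct and follows essentially the same route as the paper: both invoke Proposition~\ref{exgqr} to write $L$ via a generalized quadrature rule with nodes in the projective closure of $S$, and then perturb each node into $S$ to obtain an approximating element of $M_{2d}(S)$. The paper's version is terser because it stays in the projective picture (``replace all nodes by close-by nodes lying in $S$ and leave the weights unchanged''), whereas you unwind this in affine coordinates, which forces you to identify the scaling $\be/\|z_m\|^{2d}$ explicitly; this is exactly the continuity argument the paper leaves implicit via Remark~\ref{skal} (with the asymptotically equivalent factor $(1+\|z_m\|^2)^{-d}$).
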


\begin{proof}
Let $L\in P_{2d}(S)^*$ and $U$ be a neighborhood of $L$ in $\R[\x]_{2d}^*$.
By Proposition \ref{exgqr} $L$ possesses a generalized quadrature rule with all nodes contained in the projective closure of $S$.
By continuity, we find an element of $U$ possessing a quadrature rule with all nodes in $S$. Indeed, it suffices to
replace all nodes in the quadrature
rule for $L$ by close-by nodes lying in $S$ and to leave the weights unchanged.
\end{proof}

\noindent
The inclusion $M_{2d}(S)\subseteq P_{2d}(S)^*$ for $d\in\N_0$ and $S\subseteq\R^n$ is in general strict as the next example
shows. In particular, $M_{2d}(S)$ is in general not closed whereas $P_{2d}(S)^*$ is. This also shows that quadrature rules for elements of
$P_{2d}(S)^*$ require in general nodes at infinity in accordance with Proposition \ref{exgqr}.

\begin{ex}
The linear form
\[L_\infty\colon\R[X]_2\to\R, \ aX^2+bX+c\mapsto a\qquad(a,b,c\in\R)\]
lies in $P_2^*(\R)$ but clearly not in $M_2(\R)$ since a measure with zero mass is the zero measure.
By Proposition \ref{momclo}, $L_\infty$ is a limit of a sequence of $2$-truncated Riesz functionals of measures on $\R$. Here
it is very easy to see that these measures can be taken to be normal distributions.
\end{ex}

%********************************************************************************
% Section: The conic optimization problem
%********************************************************************************

\section{The conic optimization problem}\label{conic}

\noindent
Throughout the section, fix
\begin{itemize}
\item $n,d\in\N$,
\item a closed set $S\subseteq\R^n$,
\item $L\in M_{2d-1}(S)$ and
\item $f\in\R[\x]_{2d}$.
\end{itemize}
Typically, $S$ would be the support of a measure $\mu$ on $\R^n$ with finite moments up to degree $2d-1$,
$L:=L_{\mu,2d-1}$ its truncated Riesz-functional
and $f$ a quickly growing polynomial like $X_1^{2d}+\dots+X_n^{2d}$. The overall
idea is to find a quadrature rule for $L$ with few nodes by extending $L$ to some $\La\in M_{2d}(S)$ with minimal $\La(f)$ and hoping
that \emph{each} quadrature rule for $\La$ has few nodes. In order to fit into the framework of conic programming, we have to work
\emph{with the closure} of $M_{2d}(S)$ which is $P_{2d}(S)^*$ by Proposition \ref{momclo}.

\bigskip\noindent
Now consider the following primal-dual pair of conic optimization problems
\cite[Subsection 3.1]{ren} {(the optimization variables are $\La$ in the primal and $q$ in the dual and the appropriate choice
of $f$ will be discussed in the sequel)}:

\[\label{POP}
\begin{array}{lllll}
(P_{L,f,S})&\text{minimize}&\La(f)\\
&\text{subject to}&\La\in P_{2d}(S)^*\\
&&\La|_{\R[\x]_{2d-1}}=L\\
\\
(D_{L,f,S})&\text{maximize}&L(q)\\
&\text{subject to}&q\in\R[\x]_{2d-1}\\
&&f-q\in P_{2d}(S)
\end{array}
\]
Note that our optimization problems have been formulated without reference to a scalar product \cite[Page 66]{ren} but one could write them
simply in terms of matrices and vectors by choosing for example the monomial basis in $\R[\x]_{2d-1}$ and its dual basis in $\R[\x]_{2d-1}^*$.

\bigskip\noindent
Although, this follows from the general theory of conic programming \cite[Page 66]{ren},
we give a direct argument for weak duality of $(P_{L,f,S})$ and $(D_{L,f,S})$:
Denote by $P_{L,f,S}^*$ and $D_{L,f,S}^*$ the optimal values of these optimization problems
which are defined as an infimum and a supremum in the ordered set
$\{-\infty\}\cup\R\cup\{\infty\}$, respectively \cite[Page 65]{ren}. Indeed, we have \emph{weak duality} $P_{L,f,S}^*\ge D_{L,f,S}^*$
for if $\La$ is feasible for $(P_{L,f,S})$ and $q$ for $(D_{L,f,S})$, then $\La(f)\ge L(q)$ since
\[\La(f)-L(q)=\La(f)-\La(q)=\La(f-q)\subseteq \La(P_{2d}(S))\subseteq\R_{\ge0}.\]

\begin{prop} \label{fies}
$(P_{L,f,S})$ is feasible.
\end{prop}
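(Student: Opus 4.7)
The plan is to produce an explicit extension $\Lambda \in P_{2d}(S)^*$ of $L$ to $\R[\x]_{2d}$. The natural candidate $L_{\mu,2d}$ attached to the original measure $\mu$ with $L = L_{\mu,2d-1}$ need not exist, because $\mu$ is only assumed to have finite moments up to degree $2d-1$; in particular its $2d$-th pure moments could be infinite. So the main (and really only) obstacle is this possible lack of higher moments, which has to be circumvented by replacing $\mu$ with a better-behaved measure that agrees with it on $\R[\x]_{2d-1}$.

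To do this, first invoke the Bayer--Teichmann theorem (Theorem \ref{bavaria}) applied to $\mu$ with $r=2d-1$: it produces a finite set $N \subseteq \operatorname{supp}\mu \subseteq S$ and weights $w \colon N \to \R_{>0}$ such that
\[
L(p) = \sum_{x \in N} w(x)\, p(x) \qquad \text{for all } p \in \R[\x]_{2d-1}.
\]
Equivalently, the finitely supported measure $\nu := \sum_{x\in N} w(x)\,\delta_x$ satisfies $L_{\nu,2d-1} = L$ and has support contained in $S$. Being finitely supported, $\nu$ has finite moments of every degree, so in particular $L_{\nu,2d}$ is well-defined.

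Now define
\[
\Lambda := L_{\nu,2d} \in \R[\x]_{2d}^*, \qquad \Lambda(p) = \sum_{x \in N} w(x)\, p(x).
\]
By construction $\Lambda|_{\R[\x]_{2d-1}} = L_{\nu,2d-1} = L$, which is the second constraint of $(P_{L,f,S})$. For the first constraint, note that $\Lambda \in M_{2d}(S)$ since $\nu$ is a measure on $\R^n$ with support in $S$ and finite moments up to degree $2d$; and the inclusion $M_{2d}(S) \subseteq P_{2d}(S)^*$ observed right before Proposition \ref{momclo} gives $\Lambda \in P_{2d}(S)^*$. (Directly: for any $p \in P_{2d}(S)$, every node $x \in N \subseteq S$ satisfies $p(x) \ge 0$, and the weights $w(x)$ are positive, hence $\Lambda(p)\ge 0$.) Thus $\Lambda$ is feasible for $(P_{L,f,S})$, which completes the proof.
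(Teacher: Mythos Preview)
Your proof is correct and follows essentially the same approach as the paper: invoke Bayer--Teichmann to replace $\mu$ by a finitely supported measure $\nu$ with the same moments up to degree $2d-1$ and nodes in $S$, then take $\Lambda := L_{\nu,2d} \in M_{2d}(S) \subseteq P_{2d}(S)^*$. The paper's version is just more terse, saying one ``may assume that $\mu$ has finite support'' after applying Theorem~\ref{bavaria}.
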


\begin{proof}
By definition of $M_{2d-1}(S)$, we have $L=L_{\mu,2d-1}$ for a measure $\mu$ on $\R^n$ with support contained in $S$ and finite moments up to degree $2d-1$. By the Bayer-Teichmann Theorem \ref{bavaria}, we may assume that $\mu$ has finite support and therefore also finite moments of degree $2d$. Now $\La:=L_{\mu,2d}\in M_{2d}(S)\subseteq P_{2d}(S)^*$ is feasible for $(P_{L,f,S})$.
\end{proof}

\begin{lem}\label{rein}
If $f_{2d}(x)>0$ for all $x\in S_\infty$, then there is 
$R\in\N$ such that $f(x)>0$ for all $x\in S$ with $\|x\|\ge R$.
\end{lem}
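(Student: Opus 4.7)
The plan is a contradiction argument based on the scaling behaviour of homogeneous parts. Suppose no such $N$ exists; then I can extract a sequence $(x_m)_{m\in\N}$ in $S$ with $\|x_m\|\to\infty$ and $f(x_m)\le 0$ for all $m$. Setting $y_m:=x_m/\|x_m\|\in S^{n-1}$ and invoking the Bolzano-Weierstrass theorem, I may pass to a subsequence so that $y_m\to y$ for some $y\in S^{n-1}$.

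The first substantive task is to check that $\{y,-y\}\in\R\P^{n-1}$ lies in $S_\infty$. Under the embedding $\R^n\hookrightarrow\R\P^n$, the point $x_m$ corresponds to $\{\pm\widetilde{(1,x_m)}\}$ with
$$\widetilde{(1,x_m)}=\frac{(1,x_m)}{\|(1,x_m)\|}\to(0,y)$$
as $\|x_m\|\to\infty$. Hence $(x_m)$, viewed in $\R\P^n$, converges to the image of $\{y,-y\}\in\R\P^{n-1}$ under $\R\P^{n-1}\hookrightarrow\R\P^n$. As each $x_m\in S$ and this limit is a point at infinity, it belongs to the projective closure of $S$ intersected with $\R\P^{n-1}$, that is, to $S_\infty$.

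Next, I would decompose $f=f_0+\dots+f_{2d}$ into homogeneous parts and use homogeneity of each $f_k$ to write
$$\frac{f(x_m)}{\|x_m\|^{2d}}=\sum_{k=0}^{2d}\|x_m\|^{k-2d}f_k(y_m).$$
For each $k<2d$ the prefactor $\|x_m\|^{k-2d}$ tends to $0$ while $f_k(y_m)$ stays bounded on the compact unit sphere, so those summands vanish in the limit; the $k=2d$ summand equals $f_{2d}(y_m)\to f_{2d}(y)$. By the hypothesis applied to $\{y,-y\}\in S_\infty$, we have $f_{2d}(y)>0$, contradicting $f(x_m)/\|x_m\|^{2d}\le 0$.

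The only step demanding genuine care is the identification of the $\R\P^n$-limit of the sequence $(x_m)$ with a point of $S_\infty$; once that bookkeeping is in place, the leading-form asymptotic is routine and I do not expect any further obstacle.
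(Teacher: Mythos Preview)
Your argument is correct and follows essentially the same route as the paper's proof: extract a sequence in $S$ going to infinity with $f\le0$, pass to a subsequence converging in $\R\P^n$ to a point of $S_\infty$, and contradict $f_{2d}>0$ there. The only cosmetic difference is that the paper phrases the limiting step via continuity of the $2d$-homogenization $f_{[2d]}$ on $\R\P^n$ (invoking Remark~\ref{skal}), whereas you unpack the same asymptotic by hand through the homogeneous decomposition and the scaling $f(x_m)/\|x_m\|^{2d}$.
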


\begin{proof}
Denote by $S':=S\cup S_\infty\subseteq\R\P^n$ the projective closure of $S$. We prove the contraposition: Suppose for all
$R\in\N$ there
is $x_{R}\in S$ such that $\|x_{R}\|\ge R$ and $f(x_{R})\le0$. We show that there is $x\in S_\infty$ with $f_{2d}(x)\le0$.
Viewed as a sequence in the compact projective space $\R\P^n$, $(x_{R})_{R\in\N}$ has a convergent subsequence and thus can be assumed without loss of generality to converge. Its limit $x$ lies in $S_\infty$. 
Since the $2d$-homogenization $f_{[2d]}\in\R[X_0,\x]$ of $f$ is nonpositive on every $x_{R}$ (see Remark \ref{skal}),
it is nonpositive also on $x$. But then $0\ge f_{[2d]}(x)=f_{2d}(x)$ where we consider $x$ first as an element of $\R\P^n$ and then as an
element of its hyperplane at infinity $\R\P^{n-1}$.
\end{proof}

\noindent
Following the usual terminology, we call a feasible solution of $(P_{L,f,S})$ or  $(D_{L,f,S})$ \emph{strictly} feasible if there is a feasible
solution such that the respective cone membership constraint is fulfilled even with the cone replaced by its interior.
This is not to confuse with notion of
\emph{strong feasibility} \cite[Page 73]{ren} (confer also the proof of \cite[Theorem 3.2.6]{ren}):
\begin{itemize}
\item $(P_{L,f,S})$ is called \emph{strongly} feasible if
there is a neighbourhood $U$ of $L$ in $\R[\x]_{2d-1}^*$ such that $(P_{\tilde L,f,S})$ is feasible for all $\tilde L\in U$ 
\item $(D_{L,f,S})$ is called \emph{strongly} feasible if
there is a neighbourhood $U$ of $f$ in $\R[\x]_{2d}$ such that $(D_{L,\tilde f,S})$ is feasible for all $\tilde f\in U$.
\end{itemize}
For $(D_{L,f,S})$, it is trivial that strict feasibility implies strong feasibility. For the primal problem $(P_{L,f,\R^n})$, the same holds true since
the linear operator \[\R[\x]_{2d}^*\to\R[\x]_{2d-1}^*,\ \La\mapsto\La|_{\R[\x]_{2d-1}}\] appearing in its specification is surjective.

\begin{prop}\label{dualstrict}
Suppose that $f_{2d}(x)>0$ for all $x\in S_\infty$. Then $(D_{L,f,S})$ is strictly feasible.
\end{prop}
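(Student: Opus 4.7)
The plan is to exhibit a polynomial $q\in\R[\x]_{2d-1}$ such that $f-q$ lies in the interior of $P_{2d}(S)$, since by Proposition \ref{innerer} that is exactly what strict feasibility of $(D_{L,f,S})$ amounts to. I would take $q$ to be a sufficiently negative constant.

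First I would observe that since $\deg q\le 2d-1$, the leading form of $f-q$ of degree $2d$ is just $f_{2d}$, so the condition $(f-q)_{2d}(x)>0$ for all $x\in S_\infty$ from Proposition \ref{innerer} is immediate from the hypothesis, regardless of which constant $q$ I pick. It thus remains only to arrange that $(f-q)(x)>0$ for all $x\in S$.

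Next I would invoke Lemma \ref{rein} to obtain $N\in\N$ such that $f(x)>0$ for all $x\in S$ with $\|x\|\ge N$. The set $S\cap\{x\in\R^n\mid\|x\|\le N\}$ is compact (because $S$ is closed and this is a closed, bounded subset of $\R^n$), so the continuous function $f$ attains a minimum on it, say $f(x)\ge -M$ there, with $M\ge0$. Now choose any real constant $C>M$ and set $q:=-C\in\R[\x]_0\subseteq\R[\x]_{2d-1}$. Then for $x\in S$ with $\|x\|\le N$ we have $(f-q)(x)=f(x)+C\ge -M+C>0$, and for $x\in S$ with $\|x\|\ge N$ we have $(f-q)(x)=f(x)+C>0$ as well. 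Combining this with the already established condition on $S_\infty$ and applying Proposition \ref{innerer} shows $f-q$ lies in the interior of $P_{2d}(S)$, which is precisely strict feasibility of $(D_{L,f,S})$.

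There is no real obstacle here: the entire argument is a straightforward combination of Lemma \ref{rein} (which handles the behaviour at infinity) with compactness to lift $f$ above zero by translation. The only thing one needs to be careful about is that since we are only allowed to modify by a polynomial of degree $\le 2d-1$, we cannot touch the leading form of degree $2d$; this is precisely why the hypothesis on $f_{2d}$ over $S_\infty$ is needed and why one cannot weaken it.
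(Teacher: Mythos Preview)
Your proof is correct and follows essentially the same approach as the paper: invoke Lemma~\ref{rein} to control $f$ outside a ball, use compactness of $S\cap\{x\mid\|x\|\le N\}$ to bound $f$ from below there, and then take $q$ to be a sufficiently negative constant so that $f-q$ lies in the interior of $P_{2d}(S)$ by Proposition~\ref{innerer}. Your write-up is in fact more explicit than the paper's, spelling out the case split and the observation that the leading form is unaffected.
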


\begin{proof}
Choose $N\in\N$ such that $f(x)>0$ for all $x\in S$ with $\|x\|\ge N$ according to Lemma \ref{rein}. By compactness of
$\{x\in S\mid\|x\|\le N\}$,
we can choose $q\in\R_{\le0}\subseteq\R[\x]_{2d-1}$ such that $f-q>0$ for all $x\in S$. Now $f-q$ lies in the interior of $P_{2d}(S)$ by
Lemma \ref{innerer}.
\end{proof}

\noindent
The proof of Part (b) of the next proposition
shows that ``nodes at infinity are optimized away'' in $(P_{L,f,S})$ under mild conditions as advertised in the introduction.

\begin{prop}\label{primsol}
Let $f_{2d}(x)>0$ for all $x\in S_\infty$.
\begin{enumerate}[(a)]
\item Then $(P_{L,f,S})$ possesses an optimal solution.
\item Each such lies in $M_{2d}(S)$, i.e., possesses a quadrature rule with all nodes contained in $S$.
\end{enumerate}
\end{prop}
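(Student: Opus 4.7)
The plan is to prove (a) by a compactness argument that leverages the strict dual feasibility from Proposition \ref{dualstrict}, and then to prove (b) by a short exchange argument: any node at infinity can be removed while preserving feasibility and strictly decreasing the objective, by virtue of $f_{2d}|_{S_\infty}>0$.

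For (a), first observe that the feasible set $F$ of $(P_{L,f,S})$ is closed in $\R[\x]_{2d}^*$, as the intersection of the closed cone $P_{2d}(S)^*$ with the affine subspace cut out by $\La|_{\R[\x]_{2d-1}}=L$, and is non-empty by Proposition \ref{fies}. I would then use Proposition \ref{dualstrict} to pick $q\in\R[\x]_{2d-1}$ with $g:=f-q$ in the interior of $P_{2d}(S)$, and invoke Remark \ref{intdua} to conclude that for some $\varepsilon>0$ and every $h$ in the unit ball of a fixed norm on $\R[\x]_{2d}$ one still has $g-\varepsilon h\in P_{2d}(S)$. Applying $\La\in F$ then yields the uniform bound $|\La(h)|\le\varepsilon^{-1}(\La(f)-L(q))$ on that unit ball, so the sublevel sets $\{\La\in F:\La(f)\le c\}$ are bounded, hence compact. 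Weak duality bounds $\La(f)$ from below on $F$, so the continuous objective $\La\mapsto\La(f)$ attains its minimum on a suitable such sublevel set. Alternatively, one could simply invoke a standard conic duality theorem in the style of \cite[Theorem 3.2.6]{ren}.

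For (b), let $\La$ be an optimal solution and apply Proposition \ref{exgqr} to produce a generalized quadrature rule $w\colon N\to\R_{>0}$ for $\La$ with $N$ contained in the projective closure of $S$. The key observation is that for every $p\in\R[\x]_{2d-1}$ the $2d$-homogenization $p_{[2d]}=\sum_\al a_\al X_0^{2d-|\al|}\x^\al$ is divisible by $X_0$ and hence vanishes identically on the hyperplane at infinity. Suppose, for contradiction, that some $x_0\in N$ lies in $S_\infty$. Then the linear form $\La'$ obtained from $w$ by deleting $x_0$ still satisfies $\La'|_{\R[\x]_{2d-1}}=L$ (by the observation) and lies in $P_{2d}(S)^*$ (by Proposition \ref{exgqr} again), so $\La'$ is feasible for $(P_{L,f,S})$. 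But using $f_{[2d]}(x_0)=f_{2d}(x_0)$ for $x_0\in\R\P^{n-1}$ together with the hypothesis $f_{2d}(x_0)>0$, one gets $\La'(f)=\La(f)-w(x_0)f_{2d}(x_0)<\La(f)$, contradicting optimality. Hence $N\subseteq S$, and Definition \ref{dfgqr} exhibits $\La$ as the $2d$-truncated Riesz functional of a finitely supported measure on $S$, so $\La\in M_{2d}(S)$.

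The main obstacle is part (a): one has to make sure the compactness argument is airtight. The strict dual feasibility is essential---without it, feasible sequences could escape to infinity in the dual space and the infimum need not be attained. Part (b) is then essentially cosmetic, since the hypothesis $f_{2d}>0$ on $S_\infty$ is precisely what is needed to ``pay'' for removing each node at infinity.
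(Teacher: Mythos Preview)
Your proof is correct and follows essentially the same approach as the paper: part (b) is virtually identical (the paper removes all nodes at infinity simultaneously rather than one at a time, but the exchange argument is the same), and for part (a) the paper simply invokes the conic-programming attainment result \cite[Theorem 3.2.8]{ren} from strong dual feasibility plus primal feasibility, whereas you unpack this into a direct compactness argument via the interior point $f-q$. Your argument for (a) is in effect the proof of that cited theorem specialized to the present setting, so the two routes coincide in substance.
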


\begin{proof}
(a) It suffices by \cite[Theorem 3.2.8]{ren} to show that $(P_{L,f,S})$ is feasible and
$(D_{L,f,S})$ is \emph{strongly} feasible. This follows from Proposition \ref{fies} and Proposition \ref{dualstrict}.

\smallskip
(b) Let $\La$ be an optimal solution of $(P_{L,f,S})$. By Proposition \ref{exgqr}, $\La$ possesses a generalized quadrature rule
$w\colon N\to\R_{>0}$ in the sense of Definition \ref{dfgqr} with $N\subseteq S\cup S_\infty$.
Now simply remove all nodes at infinity, i.e., look at the generalized
quadrature rule $w|_{N\cap S}$ (which corresponds up to the scaling issue addressed in Remark \ref{skal} to a usual quadrature rule
in the sense of Definition \ref{dfhqr}). Then it is easy to see that the unique linear form $\La'\colon\R[\x]_{2d}\to\R$ having the
generalized quadrature
rule $w|_{N\cap S}$ agrees with $\La$ on $\R[\x]_{2d-1}$ and therefore is also feasible for $(P_{L,f,S})$.
Comparing the values of the objective function on $\La$ and $\La'$, yields
\[\La(f)=\La'(f)+\sum_{x\in N\cap S_\infty}w(x)f_{2d}(x).\]
By the optimality of $\La$, it follows that $N\cap S_\infty=\emptyset$ since $w(x)f_{2d}(x)>0$ for all $x\in S_\infty$.
\end{proof}

\begin{df}
For any $h\in\R[\x]$, we denote by $Z(h):=\{x\in\R^n\mid h(x)=0\}$ its \emph{real zero set}.
\end{df}

\begin{rem}\label{strategy}
The rough basic idea behind our approach is the following: We would like to find a quadrature rule for $L$ with few nodes.
It is easy to see that for any $h\in P_{2d-1}(S)$ with $L(h)=0$, every node of every quadrature rule for $L$ must lie
in $Z(h)\cap S$. If there is such an $h$ with small $Z(h)\cap S$, then we are done. In general, we can, however not expect
that such an $h$ exists. So our strategy is to extend $L$ to $\La\in M_{2d}(S)$ for which there now is an $h\in P_{2d}(S)$ with
$\La(h)=0$
such that $Z(h)\cap S$ has few elements. Then every node of every quadrature rule for $\La$ must lie in $Z(h)\cap S$. Since every
quadrature rule for $\La$ is also a quadrature rule for $L$, we are done.
\end{rem}

\begin{lem}\label{nichtgross}
If $\#S>\dim\R[\x]_{2d-1}=\binom{n+2d-1}n$, then $\R[\x]_{2d-1}+V_{2d}(S)$ is not all of $\R[X]_{2d}$.
\end{lem}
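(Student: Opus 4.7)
The plan is to reformulate the equation $\R[\x]_{2d-1}+V_{2d}(S)=\R[\x]_{2d}$ into an equivalent statement about restrictions of polynomials to $S$, and then derive a contradiction with the hypothesis $\#S>k:=\binom{n+2d-1}n$. Observing that $\R[\x]_{2d-1}+V_{2d}(S)=\R[\x]_{2d}$ holds precisely when each $f\in\R[\x]_{2d}$ admits a decomposition $f=q+h$ with $q\in\R[\x]_{2d-1}$ and $h|_S=0$, i.e., precisely when $\R[\x]_{2d}|_S=\R[\x]_{2d-1}|_S$ as subspaces of $\R^S$, I would set out to exhibit a polynomial of degree at most $2d$ whose restriction to $S$ is not the restriction of any polynomial of degree at most $2d-1$.

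Arguing by contradiction, I would assume $\R[\x]_{2d}|_S=\R[\x]_{2d-1}|_S$ and prove by induction on $r\ge 2d$ the stabilization $\R[\x]_r|_S=\R[\x]_{2d-1}|_S$. In the inductive step, each monomial $\x^\beta$ of degree $r+1$ factors as $X_i\x^\alpha$ with $|\alpha|=r$; the inductive hypothesis replaces $\x^\alpha|_S$ by $q|_S$ for some $q\in\R[\x]_{2d-1}$, so that $\x^\beta|_S=(X_iq)|_S$ lies in $\R[\x]_{2d}|_S=\R[\x]_{2d-1}|_S$ by the base case. Extending linearly over all monomials gives $\R[\x]|_S\subseteq\R[\x]_{2d-1}|_S$, and hence $\dim\R[\x]|_S\le k$.

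To obtain the contradiction, I would invoke $\#S>k$ to pick $k+1$ distinct points $s_0,\dots,s_k\in S$ and use a Lagrange-type construction (for each $i$, a product of $k$ affine linear forms, each vanishing at one of the $s_j$ with $j\ne i$ but not at $s_i$) to produce polynomials $p_0,\dots,p_k\in\R[\x]$ with $p_i(s_j)=\delta_{ij}$. Their restrictions $p_i|_S$ are then linearly independent in $\R^S$ — visible already after further restricting to the chosen $k+1$ points — which forces $\dim\R[\x]|_S\ge k+1>k$, contradicting the bound established in the previous paragraph.

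The only step of genuine content is the inductive stabilization, which however is completely elementary: it amounts to the familiar principle that once the Hilbert function of the coordinate ring $\R[\x]/I(S)$ fails to grow at one step, it remains constant thereafter, as an immediate consequence of closure under multiplication by the coordinate functions $X_1,\dots,X_n$. Everything else reduces to routine linear algebra and interpolation.
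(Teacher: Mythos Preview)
Your argument is correct and shares its core with the paper's proof: both argue by contraposition, and both rely on the same inductive stabilization (once degree-$2d$ monomials reduce on $S$ to degree $\le 2d-1$, writing $\x^\beta=X_i\x^\alpha$ and inducting on the degree shows that all polynomials do). The difference is in the endgame. The paper works modulo the ideal $I$ generated by $V_{2d}(S)$, concludes $\dim_\R\R[\x]/I\le k$, and then invokes a fact from commutative algebra (citing \cite{kun}) that the complex zero locus of $I$ has at most $\dim_\R\R[\x]/I$ points, which in particular bounds $\#S$. You instead work directly with restrictions to $S$ (equivalently, modulo the full vanishing ideal $I(S)\supseteq I$), obtain $\dim\R[\x]|_S\le k$, and finish with an explicit Lagrange-type interpolation to force $\#S\le\dim\R[\x]|_S$. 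Your route is more self-contained and stays entirely over $\R$, avoiding the appeal to algebraic geometry over $\C$; the paper's route is marginally shorter once the cited result is taken for granted and incidentally yields the stronger (but here unneeded) conclusion that even the complex variety of $I$ has at most $k$ points.
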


\begin{proof}
Suppose $\R[\x]_{2d-1}+V_{2d}(S)=\R[X]_{2d}$. We show $\#S\le\dim\R[\x]_{2d-1}$.
Let $I$ be the ideal generated in $\R[\x]$ by $V_{2d}(S)$. By hypothesis each monomial of degree $2d$ is modulo the
ideal $I$ congruent to a polynomial of degree at most $2d-1$. By induction, the same applies to each monomial regardless of its
degree. Therefore
the residue classes of monomials of degree at most $2d-1$ generate $\R[\x]/I$ as a vector space. By elementary algebraic geometry,
this implies that the variety $\{x\in\C^n\mid\forall p\in I:p(x)=0\}$ has at most
$\dim\R[\x]_{2d-1}$ many points (see for example \cite[Corollary 3.8 and Proposition 3.11(f)]{kun}).
But $S$ is contained in this variety.
\end{proof}

\begin{lem}\label{delta}
If $\#S>\dim\R[\x]_{2d-1}=\binom{n+2d-1}n$, then
there is $\De\in\R[\x]_{2d}^*\setminus\{0\}$ such that $\De|_{\R[\x]_{2d-1}}=0$ and $\De(V_{2d}(S))=\{0\}$.
\end{lem}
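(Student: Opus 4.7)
The plan is to deduce Lemma \ref{delta} almost immediately from Lemma \ref{nichtgross} by invoking a standard duality fact about proper subspaces of finite-dimensional vector spaces. Concretely, under the hypothesis $\#S>\binom{n+2d-1}{n}$, Lemma \ref{nichtgross} tells us that the subspace
\[
W:=\R[\x]_{2d-1}+V_{2d}(S)
\]
is a proper subspace of the finite-dimensional space $\R[\x]_{2d}$. Hence the annihilator of $W$ in the dual $\R[\x]_{2d}^*$ is non-trivial: we can pick any complement to $W$ in $\R[\x]_{2d}$, extend a basis of $W$ by a basis of this complement, and read off the element of the dual basis corresponding to any vector outside $W$.

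I would then let $\Delta\in\R[\x]_{2d}^*\setminus\{0\}$ be any such non-zero linear form vanishing on $W$. Since $W$ contains both $\R[\x]_{2d-1}$ and $V_{2d}(S)$ as subsets, $\Delta$ automatically satisfies $\Delta|_{\R[\x]_{2d-1}}=0$ and $\Delta(V_{2d}(S))=\{0\}$, which is exactly what is required.

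There is no real obstacle here: the work is entirely done by Lemma \ref{nichtgross}, after which only an elementary linear-algebra step remains. The one point worth being explicit about is that $\dim\R[\x]_{2d}<\infty$, so that ``proper subspace'' really does yield a non-zero annihilator (in infinite dimensions one would need Hahn--Banach, but here the finite-dimensional argument via basis extension suffices).
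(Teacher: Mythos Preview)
Your proof is correct and is exactly the argument the paper intends: its own proof simply says ``This follows directly from Lemma \ref{nichtgross} with linear algebra,'' and you have spelled out precisely that linear-algebra step.
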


\begin{proof} This follows directly from Lemma \ref{nichtgross} with linear algebra.
\end{proof}

\noindent
The next theorem will be particularly useful for finding quadrature rules with few nodes
for measures supported on plane algebraic curves, see Section \ref{curve}. Its strong part is that Part (a) works under a very mild
hypothesis, for example if $S$ is infinite. The price to pay for this is, however, that $Z(h)\cap S$ might in general not be a small enough
set according to our strategy explained in Remark \ref{strategy} (in particular, $Z(h)\cap S$ might be infinite).

\begin{thm}\label{qt}
\begin{enumerate}[(a)]
\item If $\#S>\dim\R[\x]_{2d-1}=\binom{n+2d-1}n$ and $f_{2d}(x)>0$ for all $x\in S_\infty$, then
$(P_{L,f,S})$ possesses an optimal solution $\La$ on the relative boundary of $P_{2d}(S)^*$, i.e.,
the boundary of $P_{2d}(S)^*$ in $P_{2d}(S)^*-P_{2d}(S)^*\overset{\ref{hull}}=V_{2d}(S)^*$.
\item For each feasible solution $\La$ of $(P_{L,f,S})$ on the relative boundary of $P_{2d}(S)^*$,
there is $h\in P_{2d}(S)\setminus V_{2d}(S)$ (i.e. $Z(h)\cap S$ is a proper subset of $S$, cf. Remark \ref{strategy}) with $\La(h)=0$.
\end{enumerate}
\end{thm}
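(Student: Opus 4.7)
The plan is to prove the two parts by separate arguments: for (a), a perturbation trick using the nonzero functional $\De$ provided by Lemma \ref{delta}; for (b), the supporting hyperplane theorem combined with the bipolar relation $(P_{2d}(S)^*)^* = P_{2d}(S)$.

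For (a), the plan is to start with any optimal solution $\La$ of $(P_{L,f,S})$ furnished by Proposition \ref{primsol}(a). The hypothesis $\#S > \dim\R[\x]_{2d-1}$ lets us invoke Lemma \ref{delta} to obtain a nonzero $\De \in \R[\x]_{2d}^*$ with $\De|_{\R[\x]_{2d-1}} = 0$ and $\De \in V_{2d}(S)^*$. Since $\De$ vanishes on $\R[\x]_{2d-1}$, every perturbation $\La + t\De$ still extends $L$; since $\De \in V_{2d}(S)^*$, each such perturbation stays inside the subspace $V_{2d}(S)^*$. If $\La$ is already on the relative boundary of $P_{2d}(S)^*$, we are done. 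Otherwise $\La \pm \epsilon\De \in P_{2d}(S)^*$ for small $\epsilon > 0$, and optimality of $\La$ forces $\De(f) = 0$. Then the whole segment $\{\La + t\De : t \in I\}$, where $I := \{t \in \R : \La + t\De \in P_{2d}(S)^*\}$, consists of optimal solutions. The set $I$ is closed and convex, and it cannot equal $\R$: that would force $P_{2d}(S)^*$ to contain a full line through $\La$, and rescaling by small positive scalars together with closedness would give $\R\De \subseteq P_{2d}(S)^* \cap -P_{2d}(S)^*$, violating pointedness (Proposition \ref{spitz}). Hence $I$ has at least one finite endpoint $t_0$, and $\La + t_0\De$ is an optimal solution on the relative boundary.

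For (b), $\La$ is on the boundary of the closed convex cone $P_{2d}(S)^*$ viewed inside the finite-dimensional space $V_{2d}(S)^*$. The supporting hyperplane theorem produces a nonzero linear functional $\ph \colon V_{2d}(S)^* \to \R$ with $\ph(\La) = 0$ and $\ph \geq 0$ on $P_{2d}(S)^*$. The natural evaluation pairing identifies the dual of the annihilator $V_{2d}(S)^* \subseteq \R[\x]_{2d}^*$ with the quotient $\R[\x]_{2d}/V_{2d}(S)$, so $\ph$ is evaluation against some class $h + V_{2d}(S)$; choose any representative $h \in \R[\x]_{2d}$. Then $h \notin V_{2d}(S)$ because $\ph \neq 0$, and $\La(h) = 0$ because $\ph(\La) = 0$. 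The sign condition $\ph \geq 0$ on $P_{2d}(S)^*$ reads $\La'(h) \geq 0$ for every $\La' \in P_{2d}(S)^*$; since $P_{2d}(S)$ is a closed convex cone (being an intersection of closed half-spaces), the bipolar theorem gives $h \in (P_{2d}(S)^*)^* = P_{2d}(S)$, as desired.

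I expect the main obstacle to be the bookkeeping in (a): distinguishing the relative interior from the relative boundary inside the subspace $V_{2d}(S)^*$ (as supplied by Proposition \ref{hull}), and exploiting pointedness of $P_{2d}(S)^*$ in the ambient space $\R[\x]_{2d}^*$ to rule out $I = \R$. Part (b), by contrast, is essentially a packaging of standard convex-analytic tools once one correctly identifies the dual of the annihilator $V_{2d}(S)^*$ with the quotient $\R[\x]_{2d}/V_{2d}(S)$.
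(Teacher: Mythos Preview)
Your proposal is correct and follows essentially the same approach as the paper's own proof: for (a), the perturbation $\La+t\De$ with $\De$ from Lemma~\ref{delta}, optimality forcing $\De(f)=0$, and pointedness of $P_{2d}(S)^*$ (Proposition~\ref{spitz}) ruling out a full line; for (b), the supporting hyperplane theorem combined with biduality. The only cosmetic difference is that in (b) the paper extends the supporting functional from $V_{2d}(S)^*$ to all of $\R[\x]_{2d}^*$ and then invokes the canonical isomorphism $\R[\x]_{2d}\cong\R[\x]_{2d}^{**}$, whereas you work directly with the identification $(V_{2d}(S)^*)^*\cong\R[\x]_{2d}/V_{2d}(S)$; these are equivalent packagings of the same duality.
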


\begin{proof}
(a) Suppose $\#S>\dim\R[\x]_{2d-1}$ and $f_{2d}(x)>0$ for all $x\in S_\infty$.
By Proposition \ref{primsol}(a) we can choose an optimal solution $\La$ of $(P_{L,f,S})$. If it lies on the relative boundary of $P_{2d}(S)^*$,
we are done. Therefore suppose now that $\La$ lies in the relative interior of $P_{2d}(S)^*$.
Choose $\De$ like in Lemma \ref{delta}. Then we claim that we find $\la\in\R\setminus\{0\}$ such that $\La+\la\De$ is also
an optimal solution of
$(P_{L,f,S})$. Indeed, for $\la\in\R$ close to $0$, we see that $\La+\la\De$ is feasible for $(P_{L,f,S})$ since $\De$ lies in $V_{2d}(S)^*$.
Because of the optimality of $\La$, we have that $(\La+\la\De)(f)\ge \La(f)$ for $\la$ close to $0$. Hence
$\De(f)=0$. So for each $\la\in\R$, $\La+\la\De$ is again an optimal solution for $(P_{L,f,S})$ provided it is feasible. It suffices to show that
there is $\la\in\R$ such that $\La+\la\De$ lies on the relative boundary of $P_{2d}(S)^*$. Assume this to be false. Then the whole line
$\{\La+\la\De\mid\la\in\R\}$ would lie in $P_{2d}(S)^*$. Because $P_{2d}(S)^*$ is closed, one sees easily that this would imply that the line
$\R\De$ lies also in $P_{2d}(S)^*$. But this contradicts Proposition \ref{spitz}.

\smallskip
(b) Let $\La$ be a feasible solution of $(P_{L,f,S})$ on the relative boundary of $P_{2d}(S)^*$.
By the supporting hyperplane theorem (in its conic version which follows easily from the affine version given in
\cite[Page 51]{bv}) there is a non-zero linear form $G\colon V_{2d}(S)^*\to\R$ such that $G(P_{2d}(S)^*)\subseteq\R_{\ge0}$ and
$G(\La)=0$. We can extend $G$ to a linear form $H\colon\R[\x]_{2d}^*\to\R$. Then $H$ lies in the double dual $R[\x]_{2d}^{**}$ of the vector
space $\R[\x]_{2d}$ and actually in the double dual $P_{2d}(S)^{**}$ of the cone $P_{2d}$ but not in $V_{2d}(S)^{**}$ since
$H|_{V_{2d}(S)^*}=G\ne0$. Using the canonical isomorphism $\Psi\colon\R[\x]_{2d}\to\R[\x]_{2d}^{**},\ p\mapsto(\La\mapsto\La(p))$,
we get from the closedness of $P_{2d}(S)$ by \cite[Exercise 2.31(f)]{bv} for $h:=\Psi^{-1}(H)$ that
$h\in P_{2d}(S)\setminus V_{2d}(S)$ and $\La(h)=0$.
\end{proof}

\begin{lem}\label{primalstrict}
Suppose $L(p)>0$ for all $p\in P_{2d-2}(\R^n)\setminus\{0\}$. Then $(P_{L,f,\R^n})$ is strictly feasible.
\end{lem}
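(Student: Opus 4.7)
The plan is to start from any feasible $\Lambda_0$ for $(P_{L,f,\R^n})$ (provided by Proposition~\ref{fies}) and perturb it by a linear form $\Delta\in\R[\x]_{2d}^*$ that vanishes on $\R[\x]_{2d-1}$ and is strictly positive on nonzero nonnegative $2d$-forms. Adding such a $\Delta$ preserves the restriction constraint $\Lambda|_{\R[\x]_{2d-1}}=L$, and combined with the hypothesis on $L$ should push $\Lambda:=\Lambda_0+\Delta$ into the interior of $P_{2d}(\R^n)^*$. By Remark~\ref{intdua} together with the fact that $P_{2d}(\R^n)$ is closed (so it equals its bidual), strict feasibility reduces to showing $\Lambda(p)>0$ for every $p\in P_{2d}(\R^n)\setminus\{0\}$.

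Concretely, I would use the direct sum decomposition $\R[\x]_{2d}=\R[\x]_{2d-1}\oplus\R[\x]_{=2d}$. Pick an element $\bar\Delta$ in the interior of $P_{=2d}(\R\P^{n-1})^*$; such a $\bar\Delta$ exists because $P_{=2d}(\R\P^{n-1})$ is pointed (no nonzero $2d$-form is identically zero on $\R^n$), and an explicit choice is $\bar\Delta(p):=\int_{S^{n-1}}p\,d\sigma$ for spherical Lebesgue measure $\sigma$. Extend $\bar\Delta$ to $\Delta\in\R[\x]_{2d}^*$ by declaring it zero on $\R[\x]_{2d-1}$ and set $\Lambda:=\Lambda_0+\Delta$.

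I would then split verification of $\Lambda(p)>0$ by whether $\deg p$ equals $2d$ or not. If $\deg p\le 2d-1$, i.e.\ $p\in\R[\x]_{2d-1}$, the degree-$(2d-1)$ homogeneous part $p_{2d-1}$ must vanish: otherwise, $p_{2d-1}(-y)=-p_{2d-1}(y)$ forces $p(ry)$ to be negative for large $|r|$ of the appropriate sign, contradicting $p\ge 0$ on $\R^n$. Hence $p\in P_{2d-2}(\R^n)\setminus\{0\}$, and the hypothesis gives $\Lambda(p)=L(p)>0$. If $\deg p=2d$, write $p=p_{2d}+q$ with $q\in\R[\x]_{2d-1}$; the scaling limit $p(ry)/r^{2d}\to p_{2d}(y)$ as $r\to\infty$ shows $p_{2d}\in P_{=2d}(\R\P^{n-1})\setminus\{0\}$, so
\[\Lambda(p)=\Lambda_0(p)+\bar\Delta(p_{2d})>0\]
since $\Lambda_0(p)\ge 0$ (as $\Lambda_0\in P_{2d}(\R^n)^*$ and $p\in P_{2d}(\R^n)$) and $\bar\Delta(p_{2d})>0$ by the choice of $\bar\Delta$.

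The only subtle step is the odd-degree observation in the first case: one has to notice that $\R[\x]_{2d-1}\cap P_{2d}(\R^n)=P_{2d-2}(\R^n)$, which is precisely what allows the hypothesis (formulated for $P_{2d-2}$ rather than $P_{2d-1}$) to cover the low-degree part. Everything else is bookkeeping with the cone duality already set up in Section~\ref{cone}.
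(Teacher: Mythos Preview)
Your proof is correct and follows essentially the same route as the paper's own argument: start from a feasible $\La_0$, add a linear form coming from the interior of $P_{=2d}(\R\P^{n-1})^*$ (your $\bar\De$ is the paper's $\La_\infty$), and verify $\La(p)>0$ by a degree split, using in the low-degree case that a nonnegative polynomial cannot have odd degree. The only differences are cosmetic---you give an explicit $\bar\De$ via spherical integration and spell out the biduality step, whereas the paper just invokes the ``dual version'' of Remark~\ref{intdua}.
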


\begin{proof}
By Lemma \ref{fies}, there exists a feasible solution $\La_0$ of $(P_{L,f,\R^n})$.
Since the cone $P_{=2d}(\R\P^{n-1})\subseteq\R[\x]_{=2d}$ is pointed,
there is moreover $\La_\infty\in\R[\x]_{=2d}^*$ such that $\La_\infty(p)>0$ for all $p\in P_{=2d}(\R\P^{n-1})\setminus\{0\}$. We claim that
\[\La\colon\R[\x]_{2d}\to\R,\ p\mapsto\La_\infty(p_{2d})+\La_0(p)\]
is a strictly feasible solution of $(P_{L,f,\R^n})$.
It is clear that \[\La|_{\R[\x]_{2d-1}}=\La_0|_{\R[\x]_{2d-1}}=L.\]
By the (dual version of) Remark \ref{intdua}, it remains to check that
 $\La(p)>0$ for all $p\in P_{2d}(\R^n)\setminus\{0\}$.
So let $p\in P_{2d}(\R^n)\setminus\{0\}$. It is easy to see that $p$ must be of even degree $2k$ for some $k\in\{0,\dots,d\}$. If
$k\le d-1$ then $p\in P_{2d-2}(\R^n)\setminus\{0\}$ and therefore $\La(p)=\La_\infty(p_{2d})+\La_0(p)=0+\La_0(p)>0$ by hypothesis.
Therefore we can assume $k=d$. But then it is easy to see that $p_{2d}\in P_{=2d}(\R\P^{n-1})\setminus\{0\}$ and therefore
$\La(p)=\La_\infty(p_{2d})+\La_0(p)\ge\La_\infty(p_{2d})>0$.
\end{proof}

\noindent
The next theorem will serve in Section \ref{plane} to find cubature rules with few nodes
for measures supported on the plane. Part (a) needs a strong hypothesis which is fulfilled, however,
for example if $L=L_{\mu,2d-1}$ for some non-zero
measure $\mu$ on $\R^n$ that has a density with respect to the Lebesgue measure on $\R^n$.
The reward is that in Part (b) the set $Z(h)$ has a good chance to be small like required by the strategy explained in
Remark \ref{strategy}. At least this will be true if one chooses $f$ such that $f_{2d}=X_1^{2d}+\dots+X_n^{2d}$ as we will see in Proposition \ref{norm2dbound} below. It is interesting to compare the result to Theorem \ref{qt}.

\begin{thm}\label{ct}
\begin{enumerate}[(a)]
\item Suppose $L(p)>0$ for all $p\in P_{2d-2}(\R^n)\setminus\{0\}$ and $f_{2d}(x)>0$ for all $x\in\R\P^{n-1}$.
Then both $(P_{L,f,\R^n})$ and $(D_{L,f,\R^n})$ possess optimal solutions with the same value.
\item If both $(P_{L,f,\R^n})$ and $(D_{L,f,\R^n})$ possess an optimal solution with the same value, then for each optimal solution
$\La$ of $(P_{L,f,\R^n})$ there is $h\in P_{2d}(\R^n)$ with $\La(h)=0$ and $h_{2d}=f_{2d}$.
\end{enumerate}
\end{thm}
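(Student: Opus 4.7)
For part (a), the plan is to invoke the standard strong duality theorem for conic programs, specifically \cite[Theorem 3.2.8]{ren}, in both directions. The primal problem $(P_{L,f,\R^n})$ is strictly feasible by Lemma \ref{primalstrict}, thanks to the assumption $L(p) > 0$ for all $p \in P_{2d-2}(\R^n) \setminus \{0\}$. As explained in the discussion preceding Proposition \ref{dualstrict}, strict feasibility of the primal upgrades to strong feasibility because the restriction map $\La \mapsto \La|_{\R[\x]_{2d-1}}$ is surjective. Analogously, $(D_{L,f,\R^n})$ is strictly (and hence strongly) feasible by Proposition \ref{dualstrict}, using $f_{2d}(x) > 0$ on $\R\P^{n-1} = (\R^n)_\infty$. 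Applying \cite[Theorem 3.2.8]{ren} once gives attainment of an optimal primal solution together with $P_{L,f,\R^n}^* = D_{L,f,\R^n}^*$, and applying it with the roles of the two problems swapped gives attainment of an optimal dual solution.

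For part (b), complementary slackness does all the work. Let $\La$ be any optimal solution of $(P_{L,f,\R^n})$ and pick any optimal solution $q$ of $(D_{L,f,\R^n})$, so that by hypothesis $\La(f) = L(q)$. Set $h := f - q \in \R[\x]_{2d}$. Dual feasibility says exactly $h \in P_{2d}(\R^n)$. Since $q \in \R[\x]_{2d-1}$ and $\La|_{\R[\x]_{2d-1}} = L$ by primal feasibility, we compute
\[
\La(h) \;=\; \La(f) - \La(q) \;=\; \La(f) - L(q) \;=\; 0.
\]
Finally, $\deg q \le 2d-1$ forces $q_{2d} = 0$, so $h_{2d} = f_{2d} - q_{2d} = f_{2d}$, as required.

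The substantive work is concentrated in part (a), and there the only thing to watch is that the Slater-type hypotheses really translate into the strong feasibility required by \cite[Theorem 3.2.8]{ren}; this is supplied by combining Lemma \ref{primalstrict} and Proposition \ref{dualstrict} with the surjectivity remark on the restriction map. Part (b) is then a one-line consequence of strong duality and the fact that a polynomial of degree strictly less than $2d$ has vanishing top homogeneous part.
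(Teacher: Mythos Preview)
Your proof is correct and follows essentially the same approach as the paper: part (a) combines Lemma \ref{primalstrict} and Proposition \ref{dualstrict} with the surjectivity of the restriction map to obtain strong feasibility on both sides and then invokes Renegar's duality theorem, while part (b) sets $h:=f-q$ for an optimal dual solution $q$ and reads off the three required properties exactly as the paper does.
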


\begin{proof}
(a) Both $(P_{L,f,\R^n})$ and $(D_{L,f,\R^n})$ are strictly feasible by Lemma \ref{primalstrict} and Proposition \ref{dualstrict}, respectively.
From the strict feasibility of  $(P_{L,f,\R^n})$ and the surjectivity of the
linear operator $\R[\x]_{2d}^*\to\R[\x]_{2d-1}^*,\ \La\mapsto\La|_{\R[\x]_{2d-1}}$ appearing in it,
the optimal values of the primal and the dual problem therefore agree, i.e., $P_{L,f,\R^n}^*=D_{L,f,\R^n}^*$
\cite[Corollary 3.2.7]{ren}. By \cite[Theorem 3.2.8]{ren}, it suffices to show that both $(P_{L,f,\R^n})$ and $(D_{L,f,\R^n})$ are strongly
feasible. But they are even strictly feasible as already mentioned.

\smallskip
(b) Let $\La$ be an optimal solution of $(P_{L,f,\R^n})$ and $q$ an optimal solution of $(D_{L,f,\R^n})$ such that $\La(f)=L(q)$.
Set $h:=f-q$. Then $h\in P_{2d}(S)$, $\La(h)=\La(f)-\La(q)=\La(f)-L(q)=0$ and $h_{2d}=f_{2d}$.
\end{proof}

\noindent
The following example gives a first impression of how to apply the results from this section. We reprove the existence of Gaussian
quadrature for measures on the real line (see Theorem and Definition \ref{eindeutig} below) whose existence was already proved by Gauss
if $\mu$ is the uniform distribution on a compact interval \cite{g}. We will return to the topic of Gaussian quadrature on the line
in Section \ref{line}.

\begin{ex}\label{uniex}
We apply Theorems \ref{qt} and \ref{ct} to the case $n:=1$ and $S:=\R$.
The obvious choice for $f$ is $f:=X^{2d}$ and this is essentially equivalent to all other reasonable choices of $f$.
Now Theorem \ref{qt} yields an optimal solution $\La$ of $(P_{L,X^{2d},\R})$ and a non-zero polynomial $h\in P_{2d}(\R)$ such that
$\La(h)=0$. In this case, Theorem~\ref{ct} yields the same: If \ref{ct}(a) is not applicable, then there is such an $h$ of degree at most $2d-2$,
or it is applicable and then \ref{ct}(b) yields a such an $h$ with {$2d$-th homogeneous part}
$h_{2d}=X^{2d}$. Now $h$ can have at most $d$
different roots since each of them is double by Remark~\ref{nonn} below. By Proposition \ref{primsol}(b), $\La$ possesses a
quadrature rule all of whose nodes are roots of $h$ (cf. Remark \ref{strategy}). So we have found a quadrature rule for $L$
(realizing $2d$ moments of degrees $0\dots,2d-1$ of a given measure on the real line) with at most $d$ nodes (and of course
as many positive weights). Among all quadrature rules for $L$, this quadrature rule minimizes the moment of degree $2d$ which should be considered a good thing according to the discussion in the introduction.
\end{ex}

%********************************************************************************
% Section: General results
%********************************************************************************

\section{General results}\label{general}

\noindent
In this section, we summarize the immediate
implications of Theorem \ref{ct} for quadrature rules of measures on $\R^n$. The results of this section
will mostly be very technical but the hope is that they could serve as a basis for subsequent work. At least for $n=2$ this is demonstrated
in the following two sections. For $n\ge3$ more algebraic geometry will probably have to enter the picture to make this section more fruitful.

\begin{df}\label{doubleroot}
Denote by $\C[\x]$ the complex algebra of complex polynomials in $n$ variables $X_1,\dots,X_n$.
We call $x\in\C^n$ a \emph{double root} of a polynomial $p\in\C[\x]$ if
\[p(x)=\frac{\partial p}{\partial X_1}(x)=\ldots=\frac{\partial p}{\partial X_n}(x)=0.\]
\end{df}

\begin{rem}\label{nonn}
Suppose $h\in\R[\x]$ with $h(x)\ge0$ for all $x\in\R^n$. Then each element of $Z(h)$ is a double root of $h$.
\end{rem}

\noindent
Note that the next proposition would be completely wrong if one replaced the condition $h_{2d}=X_1^{2d}+\ldots+X_n^{2d}$ by, e.g.,
$h_{2d}=(X_1^2+\ldots+X_n^2)^d$. Indeed, if we take for example $h:=(X_1^2+\ldots+X_n^{2d}-1)^2$, then $Z(h)$ is the unit sphere which is
infinite if $n\ge2$.

\begin{prop}\label{norm2dbound}
Suppose $h\in P_{2d}(\R)$ such that $h_{2d}=X_1^{2d}+\dots+X_n^{2d}$. Then \[\#Z(h)\le(2d-1)^n.\]
\end{prop}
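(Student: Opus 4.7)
The plan is to reduce the problem to counting common complex zeros of the $n$ partial derivatives of $h$, and then apply Bézout's theorem.

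First, I would use Remark \ref{nonn}: since $h \ge 0$ on $\R^n$, every real zero of $h$ is a double root in the sense of Definition \ref{doubleroot}. Thus
\[
Z(h) \;\subseteq\; \bigl\{x \in \R^n \,\bigm|\, \tfrac{\partial h}{\partial X_1}(x) = \dots = \tfrac{\partial h}{\partial X_n}(x) = 0\bigr\} \;\subseteq\; \bigl\{x \in \C^n \,\bigm|\, \tfrac{\partial h}{\partial X_1}(x) = \dots = \tfrac{\partial h}{\partial X_n}(x) = 0\bigr\}.
\]
So it suffices to show that the $n$ partial derivatives have at most $(2d-1)^n$ common complex zeros.

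Next, I would compute the leading forms. Since $h_{2d} = X_1^{2d} + \dots + X_n^{2d}$ and the lower homogeneous parts of $h$ contribute only to degree $<2d-1$ in $\partial h/\partial X_i$, the polynomial $\partial h/\partial X_i$ has degree at most $2d-1$ with leading form $2d\,X_i^{2d-1}$. The crucial point is that the common projective zero set of the leading forms $X_1^{2d-1}, \dots, X_n^{2d-1}$ in $\C\P^{n-1}$ is empty: a simultaneous vanishing would force $X_1 = \dots = X_n = 0$, which does not correspond to any projective point. Equivalently, the homogenizations of $\partial h/\partial X_1, \dots, \partial h/\partial X_n$ have no common zero at the hyperplane at infinity of $\C\P^n$.

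I would then invoke the version of Bézout's theorem referenced in the introduction as \ref{newbezout}: for $n$ polynomials in $n$ variables of respective degrees $d_1, \dots, d_n$ whose leading forms have no common zero in $\C\P^{n-1}$, the system has only finitely many common zeros in $\C^n$, and their number, counted with multiplicity, is at most $d_1 \cdots d_n$. Applied with $d_1 = \dots = d_n = 2d-1$, this yields at most $(2d-1)^n$ common complex zeros, and therefore $\#Z(h) \le (2d-1)^n$.

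The only delicate point is the verification that Bézout applies, i.e., that the system of partial derivatives is zero-dimensional; but this is exactly guaranteed by the empty-intersection-at-infinity condition just checked, which is the hypothesis of the strong form of Bézout already flagged as a main tool of the paper. No further algebraic-geometric input is needed, and the ``sum of $2d$-th powers'' shape of $h_{2d}$ is used precisely at this step — as the example $h=(X_1^2+\dots+X_n^2-1)^2$ in the paragraph preceding the proposition illustrates, no such bound can be hoped for other choices of leading form.
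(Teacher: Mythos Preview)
Your argument is mathematically sound, but you have misidentified the reference: Theorem~\ref{newbezout} in this paper is \emph{not} the $n$-dimensional B\'ezout bound you describe. It is a two-variable real variant (for $g,h\in\R[X,Y]$ with $h\ge0$ on $Z(g)$) used later for plane curves. The higher-dimensional affine B\'ezout inequality you actually need is the one the paper cites only in the discussion \emph{after} the proof, namely \cite[Chapter~4, Section~2.1]{sha} or \cite{sch}. With that correction your proof goes through: the leading forms $2d\,X_i^{2d-1}$ have no common zero in $\C\P^{n-1}$, so the system of partials is zero-dimensional and the B\'ezout bound $(2d-1)^n$ applies.

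The paper, however, deliberately avoids this machinery and gives a self-contained argument. It sets $I=(\partial h/\partial X_1,\dots,\partial h/\partial X_n)$ and shows directly that $\dim_\C\C[\x]/I\le(2d-1)^n$: since the leading term of $\partial h/\partial X_i$ is $2d\,X_i^{2d-1}$, every monomial is congruent modulo $I$ to one in $\{X_1^{a_1}\cdots X_n^{a_n}\mid 0\le a_i\le 2d-2\}$. From this bound one reads off that $I$ contains a nonzero univariate polynomial in each variable, so $V=\{x\in\C^n\mid\partial h/\partial X_i(x)=0\text{ for all }i\}$ is finite; then the surjection $\C[\x]/I\twoheadrightarrow\C^V$ gives $\#V\le(2d-1)^n$, and $Z(h)\subseteq V$ by Remark~\ref{nonn}. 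The paper explicitly remarks afterwards that your B\'ezout route is a valid alternative but ``uses much more machinery,'' and calls it ``remarkable and fortunate'' that the elementary reduction suffices here.
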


\begin{proof}
Consider the ideal
\[I:=\left(\frac{\partial f}{\partial X_1},\dots,\frac{\partial f}{\partial X_n}\right)\subseteq\C[\x],\]
and its affine algebraic variety
\[V:=\left\{x\in\C^n\mid\frac{\partial f}{\partial X_1}(x)=\dots=\frac{\partial f}{\partial X_n}(x)=0\right\}.\]
We proceed in several steps:
\begin{enumerate}[(a)]
\item $Z(h)\subseteq V$
\item The quotient algebra $\C[\x]/I$ has a complex vector space dimension of at most $(2d-1)^n$.
\item $\#V<\infty$
\item$ \#V\le(2d-1)^n$
\end{enumerate}
Claim (a) is clear from Remark \ref{nonn} and Definition \ref{doubleroot}.
Using that the leading form of $\frac{\partial f}{\partial X_i}$ is $X_i^{2d-1}$,
one shows easily that each monomial $\x^\al$ with $\al\in\N_0^n$ is modulo $I$ congruent to another monomial
$\x^\be$ with $\be\in\{0,\dots,2d-2\}$. Now (b) follows since the residue classes of these $\x^\be$ generate
$\C[\x]/I$ as a vector space. From (b) one deduces immediately that $I$ contains non-zero univariate polynomials in each variable.
This implies (c) since this leaves only finitely many possible values for each component of an element of $V$.
To prove finally (d), consider the map that sends a polynomial $\C[\x]$ to the function it induces on $V$. This map is an algebra
homomorphism. It is surjective by polynomial interpolation since $V$ is finite by (c). Moreover, it contains $I$ in its kernel and therefore
induces a surjective algebra homomorphism $\C[\x]/I\to\C^V$. In particular, this is a surjective $\C$-linear map. Hence
\[\#V=\dim_\C(\C^V)\le\dim_\C(\C[\x]/I)\overset{(b)}\le(2d-1)^n.\]
\end{proof}

\noindent
Note that in the above proof one could replace $\C$ by $\R$ everywhere. However, we chose to work with $\C$ in order to suggest
one of the possible reasons why we should not expect the bound to be optimal. In the proof, we could have used much more machinery instead
of our elementary arguments: To prove Claim (c), we could have used Gröbner basis theory \cite[Chapter 5, \S 3, Theorem 6]{clo}.
Then we could have deduced (d) from (c) by the higher-dimensional projective \cite[Chapter 4, Section 2.1]{sha} or affine
\cite{sch} analogue of Bézout's theorem. It is remarkable and fortunate
that we do not need this in our case. In principle, it is conceivable that in the future
one might find good reasons to replace $X_1^{2d}+\ldots+X_n^{2d}$ by another $2d$-form in the statement of Proposition \ref{norm2dbound}
doing the job. In that case, these nontrivial ingredients from algebraic geometry might come into play. At the moment, we can, however,
hardly imagine any better choice for the $2d$-form.

\begin{df}\label{rhodef}
For $n,d\in\N$, we set
\[\rh(n,2d):=\max\{\#Z(h)\mid h\in P_{2d}(\R^n),h_{2d}=X_1^{2d}+\dots+X_n^{2d}\}\overset{\ref{norm2dbound}}\le(2d-1)^n.
\]
\end{df}

\begin{rem}\label{roughlowerbound}
For $n,d\in\N$, we have \[d^n\le\rh(n,2d)\le(2d-1)^n\] since $Z(h)=\{1,\dots,d\}^n$ for
\[h:=(X_1-1)^2\dotsm(X_1-d)^2+\ldots+(X_n-1)^2\dotsm(X_n-d)^2\in P_{2d}(\R^n).\]
\end{rem}

\begin{ex}\label{rho1}
Clearly $\rh(1,2d)=d$ for $d\in\N$ since a polynomial $h\in P_{2d}(\R)\setminus\{0\}$ has at most $d$ real roots since
each such root is double by Remark \ref{nonn}.
\end{ex}

\begin{thm}\label{genthm}
Let $d\in\N$ and $L\in M_{2d-1}(\R^n)$ such that $L(p)>0$ for all $p\in P_{2d-2}(\R^n)\setminus\{0\}$. Then there exists a quadrature rule
$w\colon N\to\R_{>0}$ for $L$ minimizing \[\sum_{x\in N}w(x)\sum_{i=1}^nx_i^{2d}\]
among all quadrature rules for $L$. Any such $w$ satisfies
\[\#N\le\rh(n,2d)\le(2d-1)^n.\]
\end{thm}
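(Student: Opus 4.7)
The plan is to apply Theorem \ref{ct} and Proposition \ref{primsol} with the choice $f:=X_1^{2d}+\dots+X_n^{2d}$, after which the bound on $\#N$ will come directly from the definition of $\rh(n,2d)$ and Proposition \ref{norm2dbound}. First I would verify the hypotheses: for this $f$ we have $f_{2d}=f$, which is positive on $\R^n\setminus\{0\}$ and therefore satisfies $f_{2d}(x)>0$ for every $x\in\R\P^{n-1}$; combined with the standing assumption $L(p)>0$ for $p\in P_{2d-2}(\R^n)\setminus\{0\}$, this verifies both hypotheses of Theorem \ref{ct}(a) as well as the hypothesis of Proposition \ref{primsol}.

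The next step is to identify the minimum sought in the theorem with the primal optimal value $P_{L,f,\R^n}^*$. Any quadrature rule $w\colon N\to\R_{>0}$ for $L$ induces a linear form $\La_w(p):=\sum_{x\in N}w(x)p(x)$ on $\R[\x]_{2d}$ that lies in $M_{2d}(\R^n)\subseteq P_{2d}(\R^n)^*$, extends $L$, and satisfies $\La_w(f)=\sum_{x\in N}w(x)\sum_{i=1}^nx_i^{2d}$, so it is feasible for $(P_{L,f,\R^n})$ with the right objective value. Conversely, every optimal primal solution $\La$ lies in $M_{2d}(\R^n)$ by Proposition \ref{primsol}(b), hence is of the form $\La_w$ for an honest quadrature rule $w$ for $L$ with all nodes in $\R^n$ and with the same objective value. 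Combined with Proposition \ref{primsol}(a), this shows that the minimum is attained and equals $P_{L,f,\R^n}^*$.

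For the bound, I would fix any minimizing quadrature rule $w\colon N\to\R_{>0}$ and consider the associated $\La:=\La_w$, which by the previous step is an optimal solution of $(P_{L,f,\R^n})$. Theorem \ref{ct}(a) yields strong duality, and then Theorem \ref{ct}(b) produces $h\in P_{2d}(\R^n)$ with $\La(h)=0$ and $h_{2d}=f_{2d}=X_1^{2d}+\dots+X_n^{2d}$. Since $\La(h)=\sum_{x\in N}w(x)h(x)$ is a sum of nonnegative numbers with strictly positive weights, each $h(x)$ must vanish, so $N\subseteq Z(h)$, whence $\#N\le\#Z(h)\le\rh(n,2d)\le(2d-1)^n$ by Definition \ref{rhodef} and Proposition \ref{norm2dbound}.

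I do not anticipate a significant obstacle, since essentially all the work has already been done in Sections \ref{cone}, \ref{conic} and the earlier part of the present section. The one point that merits care is the equivalence of the quadrature-rule minimization and the conic minimization, which hinges on Proposition \ref{primsol}(b) ruling out generalized quadrature rules with nodes at infinity at the optimum; this is precisely where the strict positivity of $f_{2d}$ on $\R\P^{n-1}$ is needed, since it penalizes nodes at infinity and forces them to be absent from every optimal primal solution.
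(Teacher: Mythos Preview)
Your proof is correct and follows essentially the same route as the paper: choose $f=X_1^{2d}+\dots+X_n^{2d}$, identify the quadrature minimization with the primal problem $(P_{L,f,\R^n})$ via Proposition~\ref{primsol}, and then use Theorem~\ref{ct} to produce the certificate $h$ with $h_{2d}=f_{2d}$ forcing $N\subseteq Z(h)$ and hence $\#N\le\rh(n,2d)$. The only cosmetic difference is that the paper first exhibits a minimizer and then treats an arbitrary minimizer $w'$ via the associated $\La'$, whereas you first establish that the two minimization problems coincide and then treat any minimizer directly; the logical content is the same.
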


\begin{proof}
Set $f:=X_1^{2d}+\ldots+X_n^{2d}$. By Theorem \ref{ct}, $(P_{L,f,\R^n})$ possesses an optimal solution $\La$ and for each such $\La$
there is an $h\in P_{2d}(\R^n)$ with $\La(h)=0$ and $h_{2d}=f$. Now choose an optimal solution $\La$ of $(P_{L,f,\R^n})$.
By Proposition \ref{primsol}(b), $\La$ possesses a quadrature rule $w\colon N\to\R_{>0}$ which is then in particular a quadrature rule
for $L=\La|_{\R[\x]_{2d-1}}$. Let $w'\colon N'\to\R_{>0}$ be another quadrature rule for $L$. Then
$\La'\colon\R[\x]_{2d}\to\R,\ p\mapsto\sum_{x\in N'}w'(x)p(x)$ is a feasible solution for $(P_{L,f,\R^n})$ and therefore
\begin{equation}\tag{$*$}
\sum_{x\in N}w(x)\sum_{i=1}^nx_i^{2d}=\La(f)\le\La'(f)=\sum_{x\in N'}w'(x)\sum_{i=1}^nx_i^{2d}
\end{equation}
by the optimality of $\La$. Finally suppose that $(*)$ holds with equality. It then remains to show that $\#N'\le\rh(n,2d)$.
Now $\La'$ is also an optimal solution if $(P_{L,f,\R^n})$. Hence there exists
$h\in P_{2d}(\R^n)$ with  $\La'(h)=0$ and $h_{2d}=f$. According to our strategy \ref{strategy}, we have therefore $N'\subseteq Z(h)$.
Consequently, $\#N'\le\#Z(h)\le\rh(n,2d)$ by Definition \ref{rhodef}.
\end{proof}

\noindent
Note that in the above theorem, the upper bound $(2d-1)^n$ will in most cases be much worse than the Carathéodory bound
discussed after Theorem \ref{bavaria} which would amount to
\[\binom{n+2d-1}n=\frac{(n+2d-1)\dotsm(1+2d-1)}{n!}.\]
due to the factorial appearing in the denominator. However, it is still interesting that we can find a quadrature rule minimizing the
indicated term with at most $(2d-1)^n$ nodes and most notably the actual bound $\rh(n,2d)$ is probably in most cases
much better than $(2d-1)^n$ even though it is rarely known (see for example Theorem \ref{petro} below).

\begin{cor}\label{gencor}
Given $d\in\N$ and $L\in M_{2d-1}(\R^n)$ with $L(p)>0$ for all $p\in P_{2d-2}(\R^n)\setminus\{0\}$, there exists a quadrature rule for $L$
with at most $\rh(n,2d)$ many nodes.
\end{cor}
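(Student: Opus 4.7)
The plan is essentially to extract the bound directly from Theorem \ref{genthm}, since the hypotheses of the corollary match those of the theorem verbatim. Apply Theorem \ref{genthm} to the given $L$: it furnishes a quadrature rule $w\colon N\to\R_{>0}$ for $L$ that minimizes $\sum_{x\in N}w(x)\sum_{i=1}^n x_i^{2d}$ among all quadrature rules for $L$, and asserts that any such minimizer satisfies $\#N\le\rh(n,2d)$. Picking any one such $w$ yields a quadrature rule for $L$ with at most $\rh(n,2d)$ nodes, which is exactly the claim of the corollary.

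The point of stating this as a separate corollary is essentially one of emphasis: it discards the optimization interpretation and records only the numerical consequence on the number of nodes, which is the quantity of interest when one wants to compare with the Carathéodory bound $\binom{n+2d-1}{n}$ or with sharper bounds one might get on $\rh(n,2d)$ (as in Theorem \ref{petro} below, or in Example \ref{rho1} for $n=1$). Accordingly, there is no genuine obstacle to overcome and no intermediate step needed; the proof is a one-line deduction from the preceding theorem and requires no additional machinery beyond invoking it.
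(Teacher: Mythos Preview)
Your proposal is correct and matches the paper's approach: the corollary is stated immediately after Theorem~\ref{genthm} with no proof, since it is a direct consequence obtained by forgetting the minimization property and retaining only the node bound. Your one-line deduction is exactly what is intended.
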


\noindent
We do not know whether Corollary \ref{gencor} stays true without the technical
hypothesis that $L(p)>0$ for all $p\in P_{2d-2}(\R^n)\setminus\{0\}$.
On the one hand, the presence of a $p\in P_{2d-2}(\R^n)\setminus\{0\}$ with $L(p)=0$ obviously reduces the search space for the potential
quadrature rules dramatically since it obviously forces all nodes to be contained in $Z(p)$. On the other hand, it reduces the dimension of
the quadrature problem since $Z(p)$ has dimension smaller than $n$ (this follows for example by applying
\cite[Proposition 2.8.13]{bcr} to the complement of $Z(p)$). However, the geometry of $Z(p)$
might be much more complicated
than that of $\R^n$. In Theorem \ref{degreethree} below we will exploit a lucky situation where $Z(p)$ is an affine subspace.
The case where $Z(p)$ is (the real part of) a plane algebraic curve will be treated in Section \ref{plane} and {resorts} to some machinery
from algebraic geometry used in Section~\ref{curve}. Cases with more complicated $Z(p)$ will probably require much more algebraic geometry
and should be subject to future work. Anyway the following corollary demonstrates that in many cases the technical hypothesis is not a problem:

\begin{cor}
Suppose $d\in\N$ and
$\mu$ is a measure on $\R^n$ having finite moments up to degree $2d-1$. Moreover, suppose that the support of $\mu$ has
positive Lebesgue measure. Then there exists a quadrature rule of degree $2d-1$ for $\mu$ with at most $\rh(n,2d)$ many nodes.
\end{cor}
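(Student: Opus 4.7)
The plan is to reduce the statement directly to Corollary \ref{gencor} applied to the truncated Riesz functional $L:=L_{\mu,2d-1}\in M_{2d-1}(\R^n)$. Since a quadrature rule for $L$ is, by Definition \ref{dfqr}(b), exactly the same thing as a quadrature rule of degree $2d-1$ for $\mu$, it suffices to verify the single hypothesis of that corollary, namely that
\[L(p)>0\quad\text{for every }p\in P_{2d-2}(\R^n)\setminus\{0\}.\]
Once this is established, Corollary \ref{gencor} immediately produces a quadrature rule for $L$ with at most $\rh(n,2d)$ many nodes.

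This verification is the heart of the argument and essentially its only step. Given such a $p$, I would first note that $p\not\equiv0$ and $p\in\R[\x]$ imply that the real zero set $Z(p)$ has Lebesgue measure zero (a standard fact about nonzero polynomials). Since by assumption the support $S$ of $\mu$ has positive Lebesgue measure, $S$ cannot be contained in the Lebesgue-negligible set $Z(p)$, so there exists $x_0\in S$ with $p(x_0)>0$. Continuity of $p$ then yields an open neighborhood $U$ of $x_0$ and a constant $c>0$ with $p\ge c$ on $U$. The defining property of the support gives $\mu(U)>0$, and combining this with $p\ge0$ on all of $\R^n$ produces
\[L(p)=\int_{\R^n}p\,d\mu\ge\int_Up\,d\mu\ge c\,\mu(U)>0,\]
as required.

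The only (mild) point to watch is the interplay between the two hypotheses on $\mu$: the positive-Lebesgue-measure assumption on $S$ is precisely what prevents $S$ from being swallowed by the Lebesgue-negligible variety $Z(p)$, and only then does the support property of $\mu$ deliver genuine $\mu$-mass on a set where $p$ is uniformly bounded away from zero. No additional machinery (compactness, growth bounds, or the algebraic geometry invoked later in the paper) is needed here; the corollary is essentially a clean translation of the positivity condition in Corollary \ref{gencor} into the more geometric Lebesgue-measure condition on $\supp\mu$.
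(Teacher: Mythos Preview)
Your proof is correct. Both you and the paper reduce to Corollary \ref{gencor} and then verify the positivity hypothesis $L(p)>0$ for all $p\in P_{2d-2}(\R^n)\setminus\{0\}$; the difference lies only in how that verification is carried out. The paper restricts $\mu$ to the open set $\R^n\setminus Z(p)$, invokes the Bayer--Teichmann theorem (Theorem \ref{bavaria}) to replace this restricted measure by a finitely supported one with nodes outside $Z(p)$, and then argues that the resulting finite sum is positive. Your argument is more elementary: you avoid Bayer--Teichmann entirely by picking a single point $x_0$ in the support with $p(x_0)>0$, using continuity to get a uniform lower bound on a neighborhood, and then appealing directly to the definition of the support to obtain $\mu(U)>0$. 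Your route is shorter and uses strictly less machinery, while the paper's version perhaps emphasizes the recurring theme that quadrature-rule existence results can themselves be used as tools.
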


\begin{proof}
By Corollary \ref{gencor} and Definition \ref{dfqr}, it is enough to show that $\int p~d\mu>0$ for all
$p\in P_{2d-2}(\R^n)\setminus\{0\}$.
So let $p\in P_{2d-2}(\R^n)\setminus\{0\}$. In order to show $\int p~d\mu>0$, consider $S:=\R^n\setminus Z(p)$ and the measure
$\mu_S$ defined by $\mu_S(A):=\mu(A\cap S)$ for all Borel-measurable sets $A\subseteq\R^n$.
By the Bayer-Teichmann Theorem \ref{bavaria}, this measure has a quadrature rule $w\colon N\to\R_{>0}$ of degree $2d-1$ (we will need
only degree $2d-2$) with $N\subseteq S$. We have
\[\int p~d\mu=\int_S p~d\mu=\int p~d\mu_S=\sum_{x\in N}w(x)p(x)
\]
which is clearly positive once we know that $N\ne\emptyset$. Moreover, we have
\[\mu(S)=\int_S 1~d\mu=\int 1~d\mu_S=\sum_{x\in N}w(x)
\]
so that it is enough to show $\mu(S)>0$, i.e, the support of $\mu$ is not contained in $Z(p)$. To show this, it is enough to show that
$Z(p)$ has Lebesgue measure zero. But real zero sets $Z(h)$ of polynomials $h\in\R[\x]\setminus\{0\}$ are well-known to have zero
Lebesgue measure, see, e.g., \cite{ct} for a short proof of this.
\end{proof}

\begin{df}\label{bdef}
For any $h\in\R[\x]_{=2d}$, we denote by \[Z_{\R\P^{n-1}}(h):=\{x\in\R\P^{n-1}\mid h(x)=0\}\] its \emph{projective real zero set}.
For $n,d\in\N$, Choi, Lam and Reznick \cite[Page 2, Section 4]{clr} define
\[B_{n,2d}:=\sup\{\#Z_{\R\P^n}(h)\mid h\in P_{=2d}(\R\P^n),\ \#Z_{\R\P^n}(h)<\infty\}\in\N_0\cup\{\infty\}.
\]
\end{df}

\begin{prop}\label{rhole}
Let $n,d\in\N$. Then $\rh(n,2d)\le B_{n+1,2d}$.
\end{prop}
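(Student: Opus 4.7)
The plan is to reduce the affine statement to the projective one via homogenization. Let $h\in P_{2d}(\R^n)$ be any polynomial with $h_{2d}=X_1^{2d}+\dots+X_n^{2d}$; it suffices to prove $\#Z(h)\le B_{n+1,2d}$ and then take the supremum. I would pass to the $(2d)$-homogenization
\[
H:=h_{[2d]}=X_0^{2d}\,h\!\left(\tfrac{X_1}{X_0},\dots,\tfrac{X_n}{X_0}\right)\in\R[X_0,\x]_{=2d},
\]
a $(2d)$-form in $n+1$ variables, and show first that $H\in P_{=2d}(\R\P^n)$. This splits into two cases according to whether $X_0$ vanishes: on the affine chart $\{X_0\ne0\}$, the identity above together with $h\ge0$ gives $H\ge0$ after multiplying by the nonnegative factor $X_0^{2d}$; on the hyperplane at infinity $\{X_0=0\}$, the restriction of $H$ is precisely the leading form $h_{2d}=X_1^{2d}+\dots+X_n^{2d}$, which is visibly nonnegative (indeed positive definite).

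The second step is to establish a bijection between $Z(h)\subseteq\R^n$ and the projective zero set $Z_{\R\P^n}(H)$. The standard affine embedding $x\mapsto[1:x_1:\dots:x_n]$ sends each zero of $h$ to a distinct projective zero of $H$. Conversely, a projective zero of $H$ either sits in this affine chart (and corresponds uniquely to a point of $Z(h)$), or satisfies $X_0=0$; but a zero at infinity would force $x_1^{2d}+\dots+x_n^{2d}=0$, hence $x_1=\dots=x_n=0$, contradicting the fact that a projective point is represented by a nonzero vector. So positive-definiteness of $h_{2d}$ prevents any contribution from infinity, and the bijection is complete.

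Finally, Proposition \ref{norm2dbound} gives $\#Z(h)\le(2d-1)^n<\infty$, so $\#Z_{\R\P^n}(H)=\#Z(h)$ is finite as well, which is exactly the condition allowing $H$ to appear in the supremum of Definition \ref{bdef}; thus $\#Z(h)=\#Z_{\R\P^n}(H)\le B_{n+1,2d}$. Taking the maximum over admissible $h$ yields $\rh(n,2d)\le B_{n+1,2d}$. There is no real obstacle: the whole argument is a routine dehomogenize/homogenize dictionary, and the only subtle point, namely that no projective zeros get lost or created at infinity, is exactly what the condition $h_{2d}=X_1^{2d}+\dots+X_n^{2d}$ is designed to guarantee.
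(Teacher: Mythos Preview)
Your proof is correct and follows essentially the same route as the paper's: homogenize $h$ to $H=h_{[2d]}$, observe that $H$ is nonnegative on $\R\P^n$, identify $Z_{\R\P^n}(H)$ with $Z(h)$ by noting that the positive-definite leading form kills any zeros at infinity, and invoke finiteness via Proposition~\ref{norm2dbound}. The paper's version is terser (it omits the explicit verification of $H\ge0$ and writes the zero-set identification in one line), but the argument is the same.
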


\begin{proof}
Let $h\in P_{2d}(\R^n)$ such that $h_{2d}=X_1^{2d}+\dots+X_n^{2d}$. It is enough to show that there is
$g\in P_{2d}(\R\P^n)$ such that $\#Z(h)\le\#Z_{\R\P^n}(g)<\infty$. This is easy: Take the homogenization
$g:=h_{[2d]}\in\R[X_0,\x]$. Then $Z_{\R\P^n}(g)=Z(h)\cup Z_{\R\P^{n-1}}(h_{2d})=Z(h)\cup \emptyset=Z(h)$
and therefore \[\#Z_{\R\P^n}(g)=\#Z(h)\le\rh(n,2d)\le(2d-1)^n<\infty.\]
\end{proof}

\noindent
We shortly resume everything what seems to be known about the numbers $B_{n,2d}$, see \cite[Theorem 4.3, Proposition 4.13 and
Page 13]{clr}:
\[d^2\le B_{3,2d}\le\frac32d(d-1)+1\]
where the upper bound is proven using Petrovsky's nontrivial bound
on the number of empty ovals of a non-singular plane algebraic curve of degree $2d$ \cite{pet} and is known to be sharp only for
$d\le3$,
$B_{3,6d}\ge 10d^2$, $B_{3,6d+2}\ge 10d^2+1$,
$B_{3,6d+4}\ge 10d^2+4$,
$\lim_{d\to\infty}\frac{B_{3,2d}}{d^2}$ exists and lies in the interval $[\frac{10}9,\frac 32]$ and
\[B_{4,4}=10\]
where the last equality is proven using a nontrivial result of Petrovsky and Oleinik on non-singular surfaces \cite{po}.
The consequences on $\rh(n,2d)$ we get by Proposition \ref{rhole} are extremely important for us and we will call them
the Petrovsky inequality or Petrovsky-Oleinik inequality, respectively:

\begin{thm}[Petrovsky inequality]\label{petro}
\[d^2\le\rh(2,2d)\le\frac32d(d-1)+1\text{ for $d\in\N$}\]
\end{thm}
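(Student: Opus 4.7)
The statement is now essentially a corollary of material developed in the preceding pages, so my plan is simply to chain together the appropriate results.

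For the lower bound $d^2 \le \rho(2,2d)$, I would specialize Remark \ref{roughlowerbound} to $n=2$. The polynomial
\[
h := (X_1-1)^2(X_1-2)^2\cdots(X_1-d)^2 + (X_2-1)^2(X_2-2)^2\cdots(X_2-d)^2
\]
lies in $P_{2d}(\mathbb{R}^2)$, has leading form $X_1^{2d}+X_2^{2d}$ (since each summand is a sum of squares contributing the monomial $X_i^{2d}$ as its leading term), and satisfies $Z(h) = \{1,\dots,d\}^2$. Thus $\#Z(h) = d^2$, which by Definition \ref{rhodef} gives $\rho(2,2d) \ge d^2$.

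For the upper bound $\rho(2,2d) \le \tfrac32 d(d-1)+1$, I would invoke Proposition \ref{rhole} with $n=2$ to obtain $\rho(2,2d) \le B_{3,2d}$, and then apply the Choi--Lam--Reznick inequality $B_{3,2d} \le \tfrac32 d(d-1)+1$ recalled at the end of the section. This last bound, which is where all the geometric substance sits, rests on Petrovsky's theorem bounding the number of empty ovals of a non-singular plane algebraic curve of degree $2d$; it is not reproved here but taken as an external input.

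There is really no obstacle in the present step: the two deep ingredients (the construction giving the lower bound in $B_{3,2d}$, and Petrovsky's bound used for the upper bound) were already cited and absorbed into the numbers $B_{n,2d}$, while Proposition \ref{rhole} does the translation from our quantity $\rho(n,2d)$ to $B_{n+1,2d}$. The only thing worth double-checking is that the leading form of the polynomial in Remark \ref{roughlowerbound} is really $X_1^{2d}+X_2^{2d}$ and that we are not off by a trivial constant; this is immediate from expanding $(X_i-1)^2\cdots(X_i-d)^2 = X_i^{2d} + \text{(lower order)}$ for each $i$.
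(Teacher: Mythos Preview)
Your proposal is correct and follows essentially the same route as the paper: the lower bound is taken directly from Remark~\ref{roughlowerbound}, and the upper bound is obtained via Proposition~\ref{rhole} ($\rho(2,2d)\le B_{3,2d}$) together with the Choi--Lam--Reznick bound $B_{3,2d}\le\frac32d(d-1)+1$ resting on Petrovsky's theorem. The only minor wrinkle is your phrase ``the construction giving the lower bound in $B_{3,2d}$'' among the deep ingredients: you do not actually use the lower bound on $B_{3,2d}$ anywhere, since the lower bound on $\rho(2,2d)$ comes straight from the explicit polynomial in Remark~\ref{roughlowerbound}; this is purely an expository slip and does not affect the argument.
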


\begin{proof}
The upper bound follows from \cite{pet}, see \cite[Page 13]{clr}. The lower bound is the trivial one from Remark \ref{roughlowerbound}.
\end{proof}

\noindent
We will exploit the Petrovsky inequality extensively in Section \ref{plane} below. Note that it is obviously sharp for $d\in\{1,2\}$.

\begin{thm}[Petrovsky-Oleinik inequality]\label{petroo}
\[8\le\rh(3,4)\le10\]
\end{thm}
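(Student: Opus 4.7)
The proof essentially assembles results that have already been made available in the preceding text, so the plan is short.

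\textbf{Lower bound.} For the inequality $8\le\rh(3,4)$, I would simply invoke Remark~\ref{roughlowerbound}, which gives the general lower bound $d^n\le\rh(n,2d)$. Specialising to $n=3$, $d=2$ yields $2^3=8\le\rh(3,4)$. The witness polynomial furnished by that remark is
\[
h:=(X_1-1)^2(X_1-2)^2+(X_2-1)^2(X_2-2)^2+(X_3-1)^2(X_3-2)^2\in P_4(\R^3),
\]
whose leading form is $X_1^4+X_2^4+X_3^4$ and whose real zero set is $\{1,2\}^3$, of cardinality $8$.

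\textbf{Upper bound.} For the inequality $\rh(3,4)\le 10$, the plan is to chain Proposition~\ref{rhole} with the Petrovsky--Oleinik evaluation of $B_{4,4}$ quoted from \cite{po} and \cite[Theorem 4.3]{clr}. Concretely, Proposition~\ref{rhole} applied with $n=3$, $d=2$ yields
\[
\rh(3,4)\le B_{4,4},
\]
and the already-cited equality $B_{4,4}=10$ then closes the estimate.

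\textbf{Main obstacle.} There is no real obstacle within the scope of this paper: both halves reduce to results already stated. The only nontrivial ingredient is the equality $B_{4,4}=10$, whose proof uses the Petrovsky--Oleinik bound on the number of connected components of a non-singular hypersurface of degree $2d$ in $\R\P^n$; this is an external input cited from \cite{po,clr} and is not reproved here. Correspondingly, the proof in the paper should consist of essentially these two sentences plus pointers to Remark~\ref{roughlowerbound}, Proposition~\ref{rhole}, and the cited literature.
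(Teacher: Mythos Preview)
Your proposal is correct and matches the paper's own proof essentially verbatim: the lower bound is taken from Remark~\ref{roughlowerbound}, and the upper bound is obtained by combining Proposition~\ref{rhole} (giving $\rh(3,4)\le B_{4,4}$) with the value $B_{4,4}=10$ quoted from \cite{po} and \cite{clr}. The only cosmetic difference is that the paper cites \cite[Page 13]{clr} rather than \cite[Theorem 4.3]{clr}, and leaves the invocation of Proposition~\ref{rhole} implicit (it is announced in the sentence immediately preceding the theorem).
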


\begin{proof}
The upper bound follows from \cite{po}, see \cite[Page 13]{clr}. The lower bound is again the trivial one from Remark \ref{roughlowerbound}.
\end{proof}

\noindent
Before we draw some consequences from the Petrovsky-Oleinik inequality, we have to introduce a quite obvious technique which will allow
us to reduce the number of variables $n$ whenever we search for a quadrature rule for a measure whose support is known to lie in an
affine subspace.

\bigskip\noindent
We define an \emph{affine subspace} of a vector space to be the empty set or a translated vector subspace (which is unique and called
its \emph{direction}). The \emph{dimension} of an affine subspace is the dimension of its direction unless it is empty in which case we
set it to $-1$.
Let now $U$ and $V$ be
affine subspaces of $\R^n$ and $\R^m$, respectively. A map $\ph\colon U\to V$ is \emph{affine linear} if there exists
a matrix $A\in\R^{m\times n}$ and a vector $b\in\R^m$ such that $\ph(x)=Ax+b$ for all $x\in\R^n$. Now let $\ph\colon U\to V$ be a
affine linear map. For each $L\in\R[\x]_d^*$ having a quadrature rule $w\colon N\to\R_{>0}$ with $N\subseteq U$, the
linear form
\[\ph(L)\colon\R[Y_1,\dots,Y_m]_d\to\R,\ p\mapsto\sum_{x\in N}w(x)p(\ph(x))\]
has the quadrature rule
\[\ph(w)\colon\ph(N)\to\R_{>0},\ y\mapsto\sum_{x\in N\cap\ph^{-1}(\{y\})}w(x)\]
and depends only on $\ph$ and $L$ but not on $w$ because for any
$A\in\R^{m\times n}$ and $b\in\R^m$ satisfying $\ph(x)=Ax+b$ for all $x\in\R^n$ we have
\[
\sum_{x\in N}w(x)p(\ph(x))=\sum_{x\in N}w(x)p(Ax+b)=L\left(p\left(A\begin{pmatrix}X_1\\\vdots\\X_n\end{pmatrix}+b\right)\right)
\]
for all $p\in\R[Y_1,\dots,Y_m]_d$ (this gives also an easy way of computing $\ph(L)$ without having a quadrature rule $w$ for $L$ at hand).

\bigskip\noindent
Now if $U$, $V$ and $W$ are affine subspaces of $\R^n$, $\R^m$ and $\R^\ell$, respectively, and $\ph\colon U\to V$ and
$\ps\colon V\to W$ are affine linear maps, then we have for
all $L\in\R[\x]_d^*$ and quadrature rules $w\colon N\to\R_{>0}$ with $N\subseteq U$ that
$\ps(\ph(w))=(\ps\circ\ph)(w)$ and consequently $\ps(\ph(L))=(\ps\circ\ph)(L)$.

\begin{lem}\label{redu}
Let $m,n,d,k\in\N$ and $U$ be an $n$-dimensional affine subspace of $\R^m$. Then the following are equivalent:
\begin{enumerate}[(a)]
\item Every element from $M_d(\R^n)$ possesses a quadrature rule with at most $k$ nodes.
\item Every element from $M_d(U)$ possesses a quadrature rule with at most $k$ nodes which are all contained in $U$.
\end{enumerate}
\end{lem}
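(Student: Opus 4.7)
The plan is to use the pushforward construction $L \mapsto \ph(L)$ introduced just before the lemma to transport both measures and quadrature rules along an affine linear bijection between $\R^n$ and $U$, after which the two statements become equivalent via a completely symmetric argument.

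Since $U$ is an $n$-dimensional affine subspace of $\R^m$, I would start by fixing an affine linear bijection $\ph \colon \R^n \to U$ with affine linear inverse $\ps \colon U \to \R^n$. Writing $\ph(x) = Ax + b$ and invoking the formula $\ph(L)(p) = L(p(A\x + b))$ stated in the paragraph preceding the lemma, one checks that whenever $L = L_{\mu,d}$ for a measure $\mu$ on $\R^n$ with finite moments up to degree $d$, then $\ph(L) = L_{\ph_*\mu, d}$, where $\ph_*\mu$ denotes the pushforward measure on $\R^m$; this identity is just a change of variables, using that precomposition of a polynomial of degree $\le d$ with the affine linear map $\ph$ again yields a polynomial of degree $\le d$. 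Because $\ph$ maps into $U$ and $U$ is closed, $\ph_*\mu$ is supported in $U$, so $\ph(L) \in M_d(U)$. Together with the analogous statement for $\ps$ and the composition identity $\ps(\ph(L)) = (\ps \circ \ph)(L) = L$, this exhibits a bijection between $M_d(\R^n)$ and $M_d(U)$. Furthermore, for any such $L$ with quadrature rule $w \colon N \to \R_{>0}$ (which exists by the Bayer--Teichmann Theorem \ref{bavaria}), the pushforward $\ph(w) \colon \ph(N) \to \R_{>0}$ is a quadrature rule for $\ph(L)$, and since $\ph$ is injective, $\#\ph(N) = \#N$.

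With these pieces in place, both directions are immediate. For (a) $\Rightarrow$ (b), given $L' \in M_d(U)$ set $L := \ps(L') \in M_d(\R^n)$; by (a), $L$ has a quadrature rule with at most $k$ nodes, and pushing it forward under $\ph$ yields a quadrature rule for $\ph(\ps(L')) = L'$ with at most $k$ nodes, all lying in $U$. The direction (b) $\Rightarrow$ (a) is obtained by interchanging the roles of $\ph$ and $\ps$. I do not anticipate any serious obstacle; the only points warranting a brief verification are that the pushforward of a measure with finite moments up to degree $d$ retains this property (routine, because polynomial degrees are preserved by affine substitution) and that $\supp(\ph_*\mu) \subseteq U$ (immediate from the closedness of $U$). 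The essential content of the lemma is then just the observation that affine bijections respect everything in sight: moments, quadrature rules, and node counts.
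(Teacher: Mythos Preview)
Your proof is correct and follows essentially the same approach as the paper: both fix an affine linear bijection between $\R^n$ and $U$, transport linear forms and quadrature rules along it via the pushforward construction, and invoke the composition identity $(\ps\circ\ph)(L)=L$ to close the loop. Your version is a touch more explicit about why the pushforward lands in $M_d(U)$ (via pushforward measures rather than just quadrature rules), but the substance is identical.
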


\begin{proof}
Fix an affine linear bijection $\ph\colon\R^m\to U$.
Suppose that (a) holds. To show (b), let $L\in M_d(U)$. Since there exists a quadrature rule for $L$ with all nodes contained in $U$,
we can form $\ph^{-1}(L)\in M_d(\R^n)$ and by (a) we get a quadrature rule $w$ for $\ph^{-1}(L)$ with at most $k$ nodes.
Now $\ph(w)$ is a quadrature rule for $\ph(\ph^{-1}(L))=(\ph\circ\ph^{-1})(L)=\id_{\R^m}(L)=L$ with at most $k$ nodes.
This shows (a)$\implies$(b). The proof of (b)$\implies$(a) is analogous.
\end{proof}

\noindent
The following is well-known folklore:

\begin{prop}\label{sos2}
For any $p\in P_2(\R^n)$, there are $\ell_0,\dots,\ell_n\in\R[\x]_1$ such that \[p=\sum_{i=0}^n\ell_i^2.\]
\end{prop}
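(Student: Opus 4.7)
The plan is to reduce the statement to the well-known fact that a positive semidefinite quadratic form on $\R^{n+1}$ is a sum of at most $n+1$ squares of linear forms, via homogenization.

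First, I would form the $2$-homogenization $p_{[2]}\in\R[X_0,\x]_{=2}$ of $p$. My first goal is to check that $p_{[2]}$ is nonnegative on all of $\R^{n+1}$, not just on the affine chart $X_0=1$. For $(x_0,x)\in\R^{n+1}$ with $x_0\ne 0$, Remark \ref{skal} (or a direct computation) gives $p_{[2]}(x_0,x)=x_0^2\,p(x/x_0)\ge 0$ since $p\in P_2(\R^n)$. Nonnegativity on the hyperplane $\{x_0=0\}$ then follows by continuity, since $p_{[2]}$ is a polynomial. Hence $p_{[2]}$ is a nonnegative quadratic form in $n+1$ variables.

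Next I would invoke the spectral theorem: writing $p_{[2]}(y)=y^\top A y$ for a symmetric matrix $A\in\R^{(n+1)\times(n+1)}$, nonnegativity says $A\succeq 0$, so $A=B^\top B$ for some $B\in\R^{r\times(n+1)}$ with $r=\rk A\le n+1$. Denoting the rows of $B$ by $b_1,\dots,b_r$, this gives
\[
p_{[2]}(X_0,X_1,\dots,X_n)=\sum_{i=1}^r (b_i\cdot(X_0,X_1,\dots,X_n))^2,
\]
i.e., a representation of $p_{[2]}$ as a sum of at most $n+1$ squares of linear forms in $X_0,X_1,\dots,X_n$.

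Finally, specializing $X_0=1$ recovers $p$ and turns each linear form into an affine linear polynomial $\ell_i\in\R[\x]_1$, yielding $p=\sum_{i=1}^r\ell_i^2$ with $r\le n+1$. Padding with zero polynomials if necessary to reach exactly $n+1$ summands, and reindexing them as $\ell_0,\dots,\ell_n$, gives the claimed representation. There is no real obstacle here; the only step requiring care is the passage from ``nonnegative on $\R^n$'' to ``nonnegative quadratic form in $n+1$ variables'', which the homogenization-plus-continuity argument handles cleanly.
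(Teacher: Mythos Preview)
Your proof is correct and is essentially the same argument as the paper's: the paper writes $p=v^TGv$ with $v=(1,X_1,\dots,X_n)^T$, argues that $G$ is positive semidefinite by continuity, and factors $G=L^TL$, which is exactly your homogenization-plus-PSD-factorization argument in slightly different packaging. The only cosmetic difference is that you phrase the passage to $n+1$ variables via the $2$-homogenization $p_{[2]}$ and use a rank-sized factorization with padding, whereas the paper works directly with the vector $v$ and a square factor $L$.
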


\begin{proof}
Let $v$ be the column vector with entries $1,X_1,\dots,X_n$. Then there is a (unique) symmetric matrix $G\in\R^{(n+1)\times(n+1)}$ such
that $p=v^TGv$. From the nonnegativity of $p$ on $\R^2$, it follows by continuity that $G$ is positive semidefinite and therefore can
be factored (e.g., by the spectral theorem) as $G=L^TL$ for some matrix $L\in\R^{(n+1)\times(n+1)}$ (take for example the Cholesky
factorization). Now $p=v^TL^TLv=(Lv)^T(Lv)=\sum_{i=0}^n\ell_i^2$ if $\ell_0,\dots,\ell_n$ are the entries of $Lv$.
\end{proof}

\begin{thm}\label{degreethree}
For every $L\in M_3(\R^n)$, there exists a quadrature rule for $L$ with at most $\rh(n,4)$ nodes.
\end{thm}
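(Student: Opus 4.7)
The plan is to proceed by induction on $n$, splitting on whether the technical positivity hypothesis of Corollary \ref{gencor} holds. If $L(q)>0$ for all $q\in P_2(\R^n)\setminus\{0\}$, then Corollary \ref{gencor} applied with $d=2$ immediately yields a quadrature rule with at most $\rh(n,4)$ nodes, and there is nothing left to do. All of the work lies in the degenerate case, where some nonzero $p\in P_2(\R^n)$ satisfies $L(p)=0$.

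In that case I plan to use Proposition \ref{sos2} to write $p=\sum_{i=0}^n\ell_i^2$ for $\ell_i\in\R[\x]_1$ not all zero. Pick any quadrature rule $w\colon N\to\R_{>0}$ for $L$, which exists since $L\in M_3(\R^n)$. The identity
\[
0 \;=\; L(p) \;=\; \sum_{x\in N} w(x) \sum_{i=0}^n \ell_i(x)^2
\]
together with the positivity of the weights forces $\ell_i(x)=0$ for every $i$ and every $x\in N$, so every node lies in the affine subspace $U:=\bigcap_i Z(\ell_i)$, whose dimension $n'$ is strictly less than $n$. If $U$ is empty or a single point, then $L$ itself has a quadrature rule with at most one node, and $1\le\rh(n,4)$ by the bound $\rh(n,4)\ge 2^n$ from Remark \ref{roughlowerbound}. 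Otherwise $1\le n'<n$ and $L\in M_3(U)$, so Lemma \ref{redu} lets me transport the problem to $\R^{n'}$; the induction hypothesis then produces a quadrature rule with at most $\rh(n',4)$ nodes, which transports back to one for $L$ with the same node count.

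The only remaining ingredient is the monotonicity $\rh(n',4)\le\rh(n,4)$. Given a witness $h\in P_4(\R^{n'})$ with $h_4=X_1^4+\cdots+X_{n'}^4$ and $\#Z(h)=\rh(n',4)$, the polynomial
\[
h' \;:=\; h(X_1,\ldots,X_{n'}) + X_{n'+1}^4 + \cdots + X_n^4
\]
lies in $P_4(\R^n)$, has leading form $X_1^4+\cdots+X_n^4$, and satisfies $Z(h')=Z(h)\times\{0\}^{n-n'}$ since any zero must annihilate both the nonnegative $h$ and each $X_j^4$ for $j>n'$. Hence $\rh(n,4)\ge\#Z(h')=\rh(n',4)$. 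The main conceptual obstacle, and the reason this clean argument works only for $d=2$, is that Proposition \ref{sos2} makes $Z(p)$ an affine subspace precisely when $2d-2\le 2$: for larger $d$, elements of $P_{2d-2}(\R^n)\setminus\{0\}$ annihilated by $L$ cut out much more complicated varieties, and the simple Lemma \ref{redu}-style dimension reduction no longer applies.
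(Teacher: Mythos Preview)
Your proof is correct and follows essentially the same route as the paper: induction on $n$, splitting on the positivity hypothesis of Corollary~\ref{gencor}, and in the degenerate case using Proposition~\ref{sos2} to confine the nodes to an affine subspace so that Lemma~\ref{redu} and the induction hypothesis apply, with the monotonicity $\rh(n',4)\le\rh(n,4)$ shown by padding with pure fourth powers. The only cosmetic difference is that you take $U=\bigcap_i Z(\ell_i)$ and may drop several dimensions at once (handling $n'\le0$ directly), whereas the paper picks a single nonzero $\ell_0$, sets $U=Z(\ell_0)$ of dimension exactly $n-1$, and anchors the induction at $n=1$ via Example~\ref{uniex}.
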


\begin{proof}
We proceed by induction on $n$. For $n=1$, we get a quadrature rule for $L$ with at most $2$ nodes by Example \ref{uniex} and
$2\le\rh(n,4)$ by Remark \ref{roughlowerbound}. In the induction step from $n-1$ to $n$ ($n\ge2$), we get the result directly
from Corollary \ref{gencor} except if there exists $p\in P_2(\R^n)\setminus\{0\}$ with $L(p)=0$. So fix
$p\in P_2(\R^n)\setminus\{0\}$ with $L(p)=0$. We will now use the induction hypothesis. Since any quadrature rule for $L$ has all its nodes
contained in $Z(p)$, we have $L\in M_3(Z(p))$. Proposition \ref{sos2} yields $\ell_0,\dots,\ell_n\in\R[\x]_1$ with $p=\sum_{i=0}^n\ell_i^2$.
Since $p\ne0$, we have without loss of generality $\ell_0\ne0$. Now $U:=Z(\ell_0)$ is an affine subspace of $\R^n$ of dimension $n-1$.
By induction hypothesis and by Lemma \ref{redu}, there exists a quadrature rule for $L$ with at most $\rh(n-1,4)$ nodes.
It remains to show that $\rh(n-1,4)\le\rh(n,4)$ but this follows easily from the fact that for each $h\in P_4(\R^{n-1})$ we have
$h+X_n^{2d}\in P_4(\R^n)$ and $Z(h+X_n^{2d})=Z(h)\times\{0\}$.
\end{proof}

\begin{cor}\label{oleinik}
Suppose $L\in M_3(\R^3)$. Then there exists a quadrature rule for $L$ with at most $\rh(3,4)\le10$ many nodes.
\end{cor}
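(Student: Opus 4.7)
The plan is extremely short because the statement is an immediate combination of two already established results. I would simply specialize Theorem \ref{degreethree} to the case $n=3$: applied to the given $L\in M_3(\R^3)$, it yields a quadrature rule for $L$ with at most $\rh(3,4)$ many nodes. The bound $\rh(3,4)\le 10$ is then exactly the upper half of the Petrovsky--Oleinik inequality \ref{petroo}. Stringing these two together gives the claimed bound.

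So the corollary is truly a one-line deduction and I would present it as such, with no separate lemmas. There is no real obstacle; all the substantive work has already been done upstream. In particular, Theorem \ref{degreethree} has already handled the delicate dichotomy: if the technical positivity hypothesis $L(p)>0$ for all $p\in P_2(\R^3)\setminus\{0\}$ holds, it applies Corollary \ref{gencor}; otherwise it reduces via Proposition \ref{sos2} (writing a vanishing nonnegative quadratic form as a sum of squares of affine linear forms) to a lower-dimensional affine subspace, using Lemma \ref{redu} and the monotonicity $\rh(n-1,4)\le\rh(n,4)$. Consequently, no new casework is required here. The only nontrivial ingredient feeding into $\rh(3,4)\le 10$ is of course the Petrovsky--Oleinik theorem on nonsingular surfaces, which enters through Proposition \ref{rhole} and is invoked as a black box in \ref{petroo}.

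In writing the proof, I would be careful to phrase things so that the reader sees explicitly which $d$ is used: here $2d-1=3$, hence $d=2$, and Theorem \ref{degreethree} (which is stated for degree $2d-1=3$) applies to $M_3(\R^n)=M_{2d-1}(\R^n)$ with $n=3$. The bound $\rh(n,2d)=\rh(3,4)$ then matches exactly the quantity controlled by Theorem \ref{petroo}. So the final proof would read, essentially in one sentence: apply Theorem \ref{degreethree} with $n=3$ and combine with Theorem \ref{petroo}.
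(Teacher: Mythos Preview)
Your proposal is correct and matches the paper's own proof exactly: the paper also just writes ``Combine Theorem \ref{degreethree} with the Petrovsky--Oleinik inequality \ref{petroo}.'' Your additional explanatory remarks are accurate but unnecessary for the proof itself.
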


\begin{proof}
Combine Theorem \ref{degreethree} with the Petrovsky-Oleinik inequality \ref{petroo}.
\end{proof}

%********************************************************************************
% Section: Quadrature on plane algebraic curves
%********************************************************************************

\section{Quadrature on plane algebraic curves}\label{curve}

\begin{rem}\label{dubstep}
If $p\in\R[X,Y]$ and $x$ is an isolated point of $Z(p)$ in $\R^2$, then $x$ is double root of $p$ (cf. Definition \ref{doubleroot}). This follows immediately from the most basic version of the implicit function theorem.
\end{rem}

\begin{rem}\label{irred}
Recall that two polynomials in $\R[\x]$ are \emph{associate} if one is the other multiplied by a non-zero real constant.
The same definition applies to the ring of complex polynomials $\C[\x]$ if one allows a non-zero complex constant.
If $p$ is irreducible in $\R[\x]$, then
\begin{itemize}
\item either it is also irreducible in $\C[\x]$
\item or there is an irreducible element $q$ of $\C[\x]$ such that $q$ is not associate to its complex conjugate polynomial
$q^*$ in $\C[\x]$ and $p$ is associate to $q^*q$ in $\R[\x]$.
\end{itemize}
Indeed, choose an irreducible factor $q$ of $p$ in $\C[\x]$.
Then $q^*$ is also an irreducible factor of $p$ in $\C[\x]$. First consider the case where $q$ and $q^*$ are associate in $\C[\x]$, i.e.,
there is $\ze\in\C$ such that $q=\ze q^*$. Then $q^*=\ze^*q$ and therefore $q=\ze\ze^*q$ from which follows $|\ze|=1$. Choose
$\xi\in\C$ such that $\xi^2=\ze$. Now $\xi^*\xi=|\xi|^2=|\ze|=1$ and therefore $(\xi^*q)^*=\xi q^*=\xi(\xi^2)^*q=\xi\xi^*\xi^*q=\xi^*q$.
Hence $\xi^*q\in\R[\x]$.
Because $\xi^*q\in\R[\x]$ divides $p$ in $\C[\x]$, it divides $p$ also in $\R[\x]$ since a real system of linear equations has a real solution if
and only if it has a complex solution. By the irreducibility of $p$ in $\R[\x]$, we have that $p$ and $\xi^*q$ are associate in $\R[\x]$ and therefore
also in $\C[\x]$. Since $q$ is irreducible in $\C[\x]$, we have that $p$ is also irreducible in $\C[\x]$.
Finally consider the other case where $q$ and $q^*$ are not associate in $\C[\x]$. Then $q^*q\in\R[\x]$ divides $p$ in $\C[\x]$ and therefore
also in $\R[\x]$. By the irreducibility of $p$ in $\R[\x]$, $p$ and $q^*q$ are associate in $\R[\x]$.
\end{rem}

\noindent
The proof of the theorem will rely essentially on the following variant of Bézout's theorem which we did not find in the literature.

\begin{thm}\label{newbezout}
Let $g,h\in\R[X,Y]\setminus\{0\}$ such that $Z(g)\cap Z(h)$ is finite and $h(x)\ge0$ for all $x\in Z(g)$.
Then $2\#(Z(g)\cap Z(h))\le(\deg g)(\deg h)$.
\end{thm}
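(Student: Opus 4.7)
The plan is to reduce to the case where $g$ and $h$ are coprime in $\C[X,Y]$, apply projective Bézout, and then exploit the positivity hypothesis $h\ge 0$ on $Z(g)$ to force each real common zero to contribute intersection multiplicity at least two. For the initial reductions, replacing $g$ by its squarefree part does not change $Z(g)$ or increase $\deg g$, and factoring $g=g_1\cdots g_r$ into pairwise non-associate factors irreducible in $\R[X,Y]$ reduces the claim to the case of a single irreducible $g$, since $\#(Z(g)\cap Z(h))\le\sum_i\#(Z(g_i)\cap Z(h))$ and $\deg g=\sum_i\deg g_i$. So I assume $g$ is irreducible in $\R[X,Y]$ and split on whether $g\mid h$ in $\R[X,Y]$.

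In the main case $g\nmid h$, Remark \ref{irred} together with a short conjugation argument shows $g$ and $h$ are coprime in $\C[X,Y]$: if $g$ is irreducible in $\C[X,Y]$ and $g\mid h$ there, then, since $g$ is real, conjugating the quotient would force $g\mid h$ in $\R[X,Y]$; otherwise $g=c\,q\,q^*$ and the same argument applied to the irreducible factors $q$ and $q^*$ in $\C[X,Y]$ would again force $g\mid h$ over $\R$. Projective Bézout in $\P^2(\C)$ then yields
\[
\sum_{x\in Z_\C(g)\cap Z_\C(h)} I(x;g,h)\le(\deg g)(\deg h),
\]
where $I$ denotes intersection multiplicity. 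The key technical step is the lower bound $I(x;g,h)\ge 2$ at every $x\in Z_\R(g)\cap Z_\R(h)$. If $\nabla g(x)=0$, then $\mathrm{mult}_x(g)\ge 2$, giving $I(x;g,h)\ge\mathrm{mult}_x(g)\cdot\mathrm{mult}_x(h)\ge 2$. If $\nabla g(x)\ne 0$, the implicit function theorem supplies a smooth real parameterization $\gamma$ of $Z_\R(g)$ near $x$; since $h\circ\gamma$ takes value zero at the parameter of $x$ and is nonnegative nearby, its first derivative vanishes there. In local analytic coordinates in which $g$ is one of the coordinates, $I(x;g,h)$ equals the order of vanishing of $h$ on this smooth branch at $x$, which is therefore at least two.

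The degenerate case $g\mid h$ in $\R[X,Y]$ forces $Z(g)\cap Z(h)=Z(g)$ (finite by hypothesis) and $\deg h\ge\deg g$, so it suffices to prove $2\#Z_\R(g)\le(\deg g)^2$. Every point of $Z_\R(g)$ is isolated in $\R^2$ and hence a double root of $g$ by Remark \ref{dubstep}. If $g$ is irreducible in $\C[X,Y]$, applying Bézout to the coprime pair $g$ and $\partial g/\partial X$, which is nonzero since a polynomial in $Y$ alone that is irreducible in $\R[Y]$ has either empty or infinite real zero set, yields intersection multiplicity at least two at each isolated real zero, so $2\#Z_\R(g)\le(\deg g)(\deg g-1)\le(\deg g)^2$. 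Otherwise $g=c\,q\,q^*$ with $q$ irreducible in $\C[X,Y]$ and not associate to $q^*$, and $Z_\R(g)\subseteq Z_\C(q)\cap Z_\C(q^*)$, which Bézout bounds by $(\deg q)^2=(\deg g)^2/4$. I expect the main obstacle to be the nonsingular case of the intersection-multiplicity bound, where one must justify that analytic first-order vanishing of $h$ along a smooth real branch of $Z(g)$ translates, via the algebraic definition $I(x;g,h)=\dim_\C\mathcal{O}_{x,\C^2}/(g,h)$, into the desired lower bound of two.
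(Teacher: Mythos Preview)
Your proof is correct and follows essentially the same arc as the paper's: reduce to $g$ irreducible in $\R[X,Y]$, establish coprimality in $\C[X,Y]$ via Remark~\ref{irred}, apply B\'ezout, and show that each real common zero contributes intersection multiplicity at least $2$ using the implicit function theorem together with the positivity of $h$ on $Z(g)$.

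The one notable difference lies in the degenerate case $g\mid h$. The paper dispatches it in a single stroke: since $Z(g)=Z(g)\cap Z(h)$ is finite, every real zero of $g$ is double by Remark~\ref{dubstep}, so any nonzero linear combination $\tilde h$ of the partial derivatives of $g$ vanishes on $Z(g)$ (in particular $\tilde h\ge0$ there), has $\deg\tilde h\le\deg g-1\le\deg h$, and is not divisible by $g$; one is thereby thrown back to the coprime case already handled. Your separate treatment, splitting on whether $g$ is irreducible over $\C$, is correct but longer, and in fact your first sub-case (pairing $g$ with $\partial g/\partial X$) is exactly the paper's trick without noticing that it makes the second sub-case superfluous. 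As for your stated concern about translating first-order real-analytic vanishing of $h$ along a smooth branch of $Z(g)$ into the algebraic bound $I(x;g,h)\ge2$: the paper sidesteps it by arguing the contrapositive from $I(x;g,h)=1$, which by \cite[3.3(5)]{ful} forces both gradients to be nonzero and non-collinear at $x$, and then exhibits a sign change of $h$ along the real branch directly. Your route via the local ring of the smooth curve (a DVR, so the intersection number equals the order of vanishing along the branch) is equally valid.
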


\begin{proof}
One easily reduces to the case where $g$ is irreducible in the ring $\R[X,Y]$. First consider the case where $g$ divides $h$ in $\R[X,Y]$.
Then $Z(g)=Z(g)\cap Z(h)$ is finite and therefore every real root of $g$ is isolated in $\R^2$ and hence double by Remark \ref{dubstep}.
But then we can exchange $h$ by an arbitrary non-zero linear combination $\tilde h$
of the partial derivatives of $g$ with respect to $X$ and $Y$ since $Z(g)\cap Z(\tilde h)=Z(g)$ is finite, $\tilde h(x)=0\ge0$ for all
$x\in Z(g)$ and $\deg\tilde h\le1+\deg\tilde h\le\deg g\le\deg h$. In this way we can from now on
assume that $g$ does not divide $h$ in $\R[X,Y]$. It then follows easily from Remark \ref{irred}
that $g$ and $h$ do not share any common irreducible factor in $\C[X,Y]$, i.e., the hypotheses for
Bézout's theorem for not necessarily reduced \cite[Section 5.1]{ful} affine algebraic curves are fulfilled which says that
\[\sum_{(x,y)\in V}\mu_{(x,y)}(g,h)\le(\deg g)(\deg h).\]
where
\[V:=\{(x,y)\in\C^2\mid g(x,y)=h(x,y)=0\}\]
and $\mu_{(x,y)}(g,h)\in\N_0$ denotes for each $(x,y)\in\C^2$ the intersection multiplicity of $g$ and $h$ at the point $(x,y)$
\cite[Section 3.3]{ful}. A more precise projective version is given in \cite[Section 5.3]{ful} from which follows obviously the just applied
affine version by \cite[top of Page 54]{ful}. Since $Z(g)\cap Z(h)=\R^2\cap V\subseteq V$, it follows that
\[\sum_{(x,y)\in Z(g)\cap Z(h)}\mu_{(x,y)}(g,h)\le(\deg g)(\deg h).\]
Fix now $(x,y)\in Z(g)\cap Z(h)$. It is enough to show that $\mu_{(x,y)}(g,h)\ge 2$. Assume this is not true. Then $\mu_{(x,y)}(g,h)=1$
by \cite[3.3(2)]{ful}. Now $(x,y)$ is neither a double root of $g$ nor of $h$ and the evaluations of
$\left(\frac{\partial g}{\partial X} \frac{\partial g}{\partial Y}\right)$ and $\left(\frac{\partial h}{\partial X} \frac{\partial h}{\partial Y}\right)$
at $(x,y)$ are not collinear by \cite[3.3(5)]{ful} and therefore after an affine change of coordinates \cite[3.3(3)]{ful} without loss of generality
$(0,1)$ and $(1,0)$. By the most basic version of the implicit function theorem, there is an open neighborhood $U$ of $x$ in $\R$ such that
there is a differentiable function $f\colon U\to\R$ satisfying $f(x)=y$ and $g(\xi,f(\xi))=0$ for all $\xi\in U$. By the chain rule,
the derivative of the function $U\to\R,\ \xi\mapsto h(\xi,f(\xi))$ at $x$ is $1$, i.e., $\frac{h(\xi,f(\xi))}\xi$ is close to $1$ and therefore
$h(\xi,f(\xi))<0$ when $\xi\in U$ is negative and sufficiently close to $0$. This contradicts $h(\xi,f(\xi))\ge0$ for all $\xi\in U$.
\end{proof}

\begin{cor}\label{kundk}
Let $p\in\R[X,Y]$ be of degree $k\in\N_0$ such that $Z(p)$ is finite. Then \[2\#Z(p)\le k(k-1).\]
\end{cor}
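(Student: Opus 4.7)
The plan is to apply Theorem \ref{newbezout} to $g := p$ and $h$ an appropriately chosen partial derivative of $p$, exploiting the fact that under the hypothesis of finiteness, every point of $Z(p)$ is isolated and hence a double root by Remark \ref{dubstep}.

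First I would dispose of the trivial cases. If $k = 0$, then $p$ is a non-zero constant (otherwise $Z(p) = \R^2$ would not be finite), so $Z(p) = \emptyset$ and $0 \le 0$ holds. If $Z(p) = \emptyset$ we are again done. So assume $k \ge 1$ and $Z(p) \ne \emptyset$.

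Next I would choose $h$. Since $k \ge 1$, at least one of $\frac{\partial p}{\partial X}$ and $\frac{\partial p}{\partial Y}$ is a non-zero polynomial in $\R[X,Y]$; pick such a one and call it $h$. Then $\deg h \le k - 1$, and $h \ne 0$. By Remark \ref{dubstep}, every point of $Z(p)$ is isolated (because $Z(p)$ is finite), hence a double root of $p$ in the sense of Definition \ref{doubleroot}, so in particular $h$ vanishes on all of $Z(p)$. Therefore
\[
Z(g) \cap Z(h) = Z(p) \cap Z(h) = Z(p),
\]
which is finite, and $h(x) = 0 \ge 0$ for every $x \in Z(g)$. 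This is exactly the hypothesis of Theorem \ref{newbezout}.

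Applying Theorem \ref{newbezout} yields
\[
2\,\#Z(p) \;=\; 2\,\#(Z(g) \cap Z(h)) \;\le\; (\deg g)(\deg h) \;\le\; k(k-1),
\]
which is the desired bound. The only subtlety is ensuring a non-zero choice of $h$ with $Z(p) \subseteq Z(h)$; this I do not expect to cause any real difficulty, since the double-root property supplied by Remark \ref{dubstep} gives two candidate derivatives and at least one of them is non-zero whenever $p$ itself is non-constant. No further machinery is required beyond Theorem \ref{newbezout} and Remark \ref{dubstep}.
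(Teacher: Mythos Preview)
Your proof is correct and follows essentially the same approach as the paper: pick a non-zero partial derivative of $p$, observe via Remark~\ref{dubstep} that it vanishes on $Z(p)$, and apply Theorem~\ref{newbezout} with $g=p$ and $h$ this derivative. Your treatment is in fact slightly more careful than the paper's, since you explicitly dispose of the case $k=0$ and write $\deg h \le k-1$ rather than assuming equality.
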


\begin{proof}
Obviously we have $k=\deg p\ge 1$. Therefore at least one of the partial derivatives of $p$ is not the zero polynomial. Without loss of
generality $q:=\frac{\partial p}{\partial X}\ne0$. By Remark \ref{dubstep} we have $q(x)=0$ for all $x\in Z(p)$. Therefore
$Z(p)=Z(p)\cap Z(q)$ and $2\#Z(p)=2\#(Z(g)\cap Z(q))\le(\deg p)(\deg q)=k(k-1)$ by Theorem \ref{newbezout}.
\end{proof}

\noindent
We come now to the main result of this section.

\begin{thm}\label{curvethm}
Let $d\in\N$, $g\in\R[X,Y]\setminus\{0\}$, $k:=\deg g$, $S:=Z(g)$ and $L\in M_{2d-1}(S)$.
Suppose $f\in\R[X,Y]_{2d}$ satisfies $f_{2d}(x,y)>0$ for all $(x,y)\in S_\infty$.
Then there exists a quadrature rule $w\colon N\to\R_{>0}$ for $L$ with $N\subseteq S$ and
\[\#N\le dk\] that minimizes
\[\sum_{(x,y)\in N}w(x,y)f(x,y)\]
among all quadrature rules for $L$ whose nodes are all contained in $S$.
\end{thm}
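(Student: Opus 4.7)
The plan is to apply the conic programming machinery of Section~\ref{conic} to the primal problem $(P_{L,f,S})$, extract a quadrature rule with nodes in $S$ from an optimal solution, and bound the number of its nodes via the Bézout-type estimate Theorem~\ref{newbezout}.

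Minimality and the existence of a quadrature rule with $N\subseteq S$ come for free from Proposition~\ref{primsol}. Indeed, since $f_{2d}>0$ on $S_\infty$, $(P_{L,f,S})$ possesses an optimal solution $\La$, and every such $\La$ lies in $M_{2d}(S)$, hence admits a quadrature rule $w\colon N\to\R_{>0}$ with $N\subseteq S$. Such $w$ is automatically a quadrature rule for $L=\La|_{\R[\x]_{2d-1}}$. For any competing quadrature rule $w'\colon N'\to\R_{>0}$ for $L$ with $N'\subseteq S$, the linear form $\La'(p):=\sum_{x\in N'}w'(x)p(x)$ lies in $M_{2d}(S)\subseteq P_{2d}(S)^*$ and restricts to $L$ on $\R[\x]_{2d-1}$, so it is feasible for $(P_{L,f,S})$; the optimality of $\La$ then yields
\[
\sum_{(x,y)\in N}w(x,y)f(x,y)=\La(f)\le\La'(f)=\sum_{(x,y)\in N'}w'(x,y)f(x,y),
\]
which is the minimization claim.

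The real content is the bound $\#N\le dk$. The plan is to choose $\La$ to be optimal \emph{and} to lie on the relative boundary of $P_{2d}(S)^*$. In the main case $\#S>\binom{2d+1}{2}$, such a $\La$ is furnished by Theorem~\ref{qt}(a), and Theorem~\ref{qt}(b) then supplies a polynomial $h\in P_{2d}(S)\setminus V_{2d}(S)$ with $\La(h)=0$. Since $h\ge0$ on $S\supseteq N$ and $w>0$, the identity $0=\La(h)=\sum_{x\in N}w(x)h(x)$ forces $h(x)=0$ at every node, so $N\subseteq Z(h)\cap Z(g)$, and Theorem~\ref{newbezout} with $\deg g=k$ and $\deg h\le 2d$ gives
\[
\#N\le\#(Z(h)\cap Z(g))\le\tfrac12(\deg h)(\deg g)\le dk.
\]
In the remaining case $\#S\le\binom{2d+1}{2}$, the set $S$ is finite, so Corollary~\ref{kundk} yields $\#S\le k(k-1)/2$; the bound $\#N\le\#S\le dk$ then follows directly when $k\le 2d+1$, while for $k\ge 2d+1$ Carathéodory applied in $\R[\x]_{2d-1}^*$ gives $\#N\le\binom{2d+1}{2}\le dk$.

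The main obstacle I anticipate is the tacit assumption, in the application of Theorem~\ref{newbezout}, that $Z(h)\cap Z(g)$ be finite. This can fail only when $h$ and $g$ share an $\R$-irreducible factor with positive-dimensional real zero set; since $h\notin V_{2d}(S)$, not every irreducible factor of $g$ can divide $h$. The plan to handle this is to decompose $g$ into its squarefree irreducible factorization $g=g_1\cdots g_m$ and to argue componentwise: apply Theorem~\ref{newbezout} to each $g_i$ not dividing $h$, bounding the nodes lying in $Z(g_i)$ by $d\deg g_i$ and summing to $dk$, while for components $g_i$ dividing $h$ one exploits the freedom to modify $h$ modulo the vanishing ideal $V_{2d}(S)$ (on which $\La$ vanishes) so as to eliminate the shared factor before applying Bézout.
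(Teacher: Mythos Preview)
Your setup of the minimality claim via Proposition~\ref{primsol} is clean and matches the paper's intent. You also correctly identify the crux: Theorem~\ref{newbezout} needs $Z(g)\cap Z(h)$ finite, which can fail when $g$ and $h$ share an irreducible factor.

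The genuine gap is your proposed remedy for that case. You suggest, when $g_i\mid h$, to ``modify $h$ modulo $V_{2d}(S)$'' to eliminate the shared factor. This cannot work when $S_i=Z(g_i)$ is infinite. In that situation $S_i$ is Zariski-dense in the complex curve $\{g_i=0\}$ (in both cases of Remark~\ref{irred}), so every polynomial vanishing on $S_i$ is divisible by $g_i$. Since $V_{2d}(S)\subseteq V_{2d}(S_i)$, every $v\in V_{2d}(S)$ is divisible by $g_i$, and therefore so is $h+v$; the factor cannot be removed. A concrete instance: take $g=X(X^2+Y^2-1)$ and $L$ coming from a measure supported on the $Y$-axis. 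Then $\La$ is supported on the $Y$-axis, $h=X^2$ is a legitimate output of Theorem~\ref{qt}(b), and $V_{2d}(S)$ consists of multiples of $X(X^2+Y^2-1)$, all divisible by $X$. Your $h$ gives no control over the nodes on the $Y$-axis.

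The paper closes this gap by decomposing \emph{before} optimizing rather than after. One first takes an optimal $\La_0$ for $(P_{L,f,S})$ with a quadrature rule $w_0$, splits the node set along the irreducible components $S_i=Z(p_i)$ of $S$, and lets $L_i$ be the restriction of $\La_0$ to the nodes in $S_i$. For each infinite $S_i$ with too many nodes one then \emph{re-optimizes} the component problem $(P_{L_i,f,S_i})$, using Theorem~\ref{qt}(a) on $S_i$ alone to land $\La_i$ on the relative boundary of $P_{2d}(S_i)^*$. The resulting certificate $h_i\in P_{2d}(S_i)\setminus V_{2d}(S_i)$ is now specific to the irreducible curve $S_i$: since $p_i$ is irreducible and $h_i$ does not vanish identically on $S_i$, the intersection $Z(p_i)\cap Z(h_i)$ is automatically finite and Theorem~\ref{newbezout} yields $\#N_i\le d\,k_i$. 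For finite $S_i$ one uses Corollary~\ref{kundk} together with $k_i\le k\le 2d+1$ as you do. Summing gives $\#N\le dk$, and one checks that the reassembled $\La=\sum_i\La_i$ is still optimal for the original $(P_{L,f,S})$ by comparing $\La(f)$ with $\La_0(f)$ termwise.

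A minor side remark: in your large-$k$ fallback, Carath\'eodory applied in $\R[\x]_{2d-1}^*$ yields a quadrature rule for $L$, not for $\La$, so it need not achieve the minimal $f$-value. The paper applies Carath\'eodory in $(\R[X,Y]_{2d-1}+\R f)^*$ instead, giving $\#N\le 2d^2+d+1$ while preserving optimality; this is why the threshold is $k\ge 2d+2$ rather than $2d+1$.
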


\begin{proof}
By Proposition \ref{primsol}, we can choose an optimal solution $\La_0$ of $(P_{L,f,S})$ and a quadrature rule $w_0\colon N_0\to\R_{>0}$ with
$N_0\subseteq S$. Using a Carathéodory argument similar to the one in the discussion after Theorem \ref{bavaria} in the vector space
$(\R[X,Y]_{2d-1}+\R f)^*$, we see that we can assume $\#N_0\le\dim(\R[X,Y]_{2d-1}+\R f)^*=\dim(\R[X,Y]_{2d-1}+\R f)=
(\dim\R[X,Y]_{2d-1})+1=(2d+1)d+1=2d^2+d+1$. If $k\ge 2d+2$, we have thus $\#N_0\le2d^2+d+1\le dk$ and we are done. Therefore
suppose from now on that we are in the nontrivial case where $k\le2d+1$.

Since $\R[X,Y]$ is a factorial ring, we can choose $m\in\N_0$ and irreducible $p_1,\dots,p_m\in\R[X,Y]_{2d}$ such
that \[g=p_1\dotsm p_m.\] Setting $S_i:=Z(p_i)$ and $k_i:=\deg p_i$ for $i\in\{1,\dots,m\}$, it follows that \[S=S_1\cup\ldots\cup S_m.\]
Choose pairwise distinct subsets $N_{0,1},\dots,N_{0,m}$ of $N_0$ such that
$N_{0,i}\subseteq S_i$ for all $i\in\{1,\dots,m\}$ and
\[N_0=N_{0,1}\dotcup\ldots\dotcup N_{0,m}\] and let
$\La_{0,i}\in M_{2d}(S_i)$ for $i\in\{1,\dots,m\}$ be defined by the property that $w|_{N_i}$ is a quadrature rule of $\La_{0,i}$.
We have that $\La_{0,i}$ is a feasible solution of $(P_{L_i,f,S_i})$ where $L_i:=\La_{0,i}|_{\R[\x]_{2d-1}}$.
Then \[\La_0=\La_{0,1}+\dots+\La_{0,m}.\]
Next observe the following important fact: If $i\in\{1,\dots,m\}$ such that $S_i$ is finite, then
\[\#N_{i,0}\le\#S_i\le\frac12k_i(k_i-1)\le k_id=dk_i\]
where the first inequality follows from $N_{i,0}\subseteq S_i$, the second from Corollary \ref{kundk} and the third from
$k_i\le k\le 2d+1$. For $i\in\{1,\dots,m\}$,
\begin{itemize}
\item in case that $\#N_{i,0}\le dk_i$ set  $\La_i:=\La_{i,0}$, $N_i:=N_{i,0}$ and $w_i:=w_{i,0}$,
\item in case that $\#N_{i,0}>dk_i$ choose by Theorem \ref{qt}(a) $\La_i$ as an optimal solution of $(P_{L_i,f,S_i})$ on the relative
boundary of $P_{2d}(S_i)$ (this is possible since $S_i$ is infinite) and choose a quadrature rule $w_i\colon N\to\R_{>0}$ with
$N_i\subseteq S_i$.
\end{itemize}
Now \[\La:=\La_1+\ldots+\La_m\] is obviously
a feasible solution of $(P_{L,f,S})$ and therefore $\La_0(f)\le\La(f)$ by the optimality of $\La_0$. On the other hand
$\La_i(f)\le\La_{i,0}(f)$ by the choice of $\La_i$ for each $i\in\{1,\dots,m\}$. Summing up over $i$ on both sides, this yields
$\La(f)\le\La_0(f)$. Therefore $\La(f)=\La_0(f)$ and $\La$ is an optimal solution of $(P_{L,f,S})$.
Now
\[w\colon N:=N_1\cup\ldots\cup N_m\to\R_{>0},\ (x,y)\mapsto\sum_{\substack{i=1\\(x,y)\in N_i}}^mw_i(x,y)\]
is a quadrature rule for $\La$ and in particular for $L$ with $N\subseteq S$.
It has the claimed optimality property by the optimality
of $\La$. It only remains to shows that $\#N\le dk$.
To this end, it suffices to show that $\#N_i\le dk_i$ for
$i\in\{1,\dots,m\}$. Fix $i\in\{1,\dots,m\}$. By choice of $\La_i$ there is nothing to show if  $\#N_{i,0}\le dk_i$.
So suppose that $\#N_{i,0}>dk_i$. Then
$\La_i$ lies on the relative boundary of $P_{2d}(S_i)^*$ and we get by Theorem \ref{qt}(b) a polynomial
$h\in P_{2d}(S_i)\setminus V_{2d}(S)$ with $\La_i(h)=0$. The irreducible affine $\R$-variety
\[V:=\{x\in\C^n\mid p_i(x)=0\}\] has dimension $1$ \cite[Proposition 4.4(f)]{kun}. Its affine $\R$-subvariety
\[\{(x,y)\in\C^2\mid p_i(x,y)=0,h(x,y)=0\}\ne V\]
has dimension at most $0$ and therefore is finite \cite[Proposition 3.11(f)]{kun}.
In particular, $S_i\cap Z(h)$ is finite and Theorem \ref{newbezout} shows
\[2\#(S_i\cap Z(h))\le k_i(2d).\]
Since $N_i\subseteq S_i\cap Z(h)$, it follows that $\#N_i\le dk_i$ as desired.
\end{proof}

\begin{cor}\label{curvecor}
Let $d\in\N$, $g\in\R[X,Y]\setminus\{0\}$, $k:=\deg g$, $S:=Z(g)$ and $L\in M_{2d-1}(S)$.
Then there exists a quadrature rule for $L$ with at most $dk$ nodes and all nodes contained in $S$.
\end{cor}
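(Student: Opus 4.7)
The plan is to deduce this corollary as an essentially immediate consequence of Theorem \ref{curvethm}, by supplying a suitable choice of the auxiliary polynomial $f \in \R[X,Y]_{2d}$ that satisfies the positivity hypothesis on $S_\infty$. The content of Theorem \ref{curvethm} already delivers a quadrature rule for $L$ with all nodes contained in $S$ and with at most $dk$ nodes; in addition it furnishes an optimality property with respect to $f$, but since the corollary does not claim any optimality, all that remains is to find an admissible $f$.

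The hypothesis we must verify is that $f_{2d}(x,y) > 0$ for every $(x,y) \in S_\infty$. Since $S_\infty \subseteq \R\P^1$, it is enough to choose $f \in \R[X,Y]_{2d}$ whose leading form $f_{2d}$ is strictly positive on all of $\R\P^1$. The natural candidate is $f := (X^2 + Y^2)^d$, for which $f_{2d} = (X^2+Y^2)^d$. For any representative $y = (y_1,y_2) \in S^1$ of a point in $\R\P^1$ we have $y_1^2 + y_2^2 = 1$, so $f_{2d}(y) = 1 > 0$. (Equivalently, one could pick $f := X^{2d} + Y^{2d}$, which has the same positivity property on $\R\P^1$.) In particular the hypothesis holds on $S_\infty$ regardless of which algebraic curve $S = Z(g)$ we started with, and even trivially in the degenerate case $S_\infty = \emptyset$.

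With this choice of $f$ in hand, Theorem \ref{curvethm} applies directly and produces a quadrature rule $w \colon N \to \R_{>0}$ for $L$ with $N \subseteq S$ and $\#N \le dk$, which is exactly the conclusion of the corollary. No step here is an obstacle: the entire work is already absorbed into Theorem \ref{curvethm}, whose proof combined the conic-duality machinery of Section \ref{conic} (via Theorem \ref{qt} and Proposition \ref{primsol}) with the Bézout-type bound of Theorem \ref{newbezout} applied factor by factor to the irreducible decomposition of $g$.
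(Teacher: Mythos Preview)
Your proposal is correct and matches the paper's approach: the corollary is stated in the paper without proof precisely because it follows immediately from Theorem~\ref{curvethm} upon choosing any $f\in\R[X,Y]_{2d}$ with $f_{2d}$ strictly positive on $\R\P^1$, and your choice $f=(X^2+Y^2)^d$ (or $f=X^{2d}+Y^{2d}$, which the paper uses later in Theorem~\ref{planethm}) does the job.
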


\noindent
In the situation of the above corollary, the trivial Carathéodory bound discussed after the Bayer-Teichmann theorem \ref{bavaria} would
yield the same statement with $\#N\le dk$ replaced by $\#N\le\binom{2+(2d-1)}2=d(2d+1)$. It is therefore not interesting for $k>2d$.
However, it yields already for small $k$ very interesting results:
The first example concerns the Gaussian quadrature on the line from Example \ref{uniex}. It will be closely investigated in Section \ref{line}
below.

\begin{ex}[Gauss quadrature on the line]\label{gaussex}
Let $d\in\N$ and choose $g\in\R[X,Y]_1\setminus\{0\}$. Then $S$ is a straight line in $\R^2$.
By Lemma \ref{redu}, Corollary \ref{curvecor} says in this case therefore that every element from $M_{2d-1}(\R)$
possesses a quadrature rule with at most $d$ nodes.
\end{ex}

\noindent
The only other previously known nontrivial example of Corollary \ref{curvecor} which we are aware of is the odd degree case of
Szegő quadrature on a circle discovered by Jones, Njåstad and Thron only in 1989 \cite[Theorem 8.4]{jnt}.

\begin{ex}[Szegő-quadrature on the circle]\label{szegö}
Let $d\in\N$ and set $g:=X^2+Y^2-1$. Then $S$ is the unit circle in $\R^2$. Corollary \ref{curvecor} now says that
every element from $M_{2d-1}(S)$ possesses a quadrature rule with at most $2d$ nodes.
\end{ex}

\noindent
Let $S:=\{(x,y)\in\R^2\mid x^2+y^2=1\}$ denote the unit circle .
The functions \[S\to\R,\ (x,y)\mapsto x^\al y^\be\qquad(\al,\be\in\N_0,\,\al+\be\le 2d-1)\] can be written as
complex linear combination of the functions \[S\to\C,\ (x,y)\mapsto (x+\ii y)^k\qquad(k\in\{-(2d-1),\dots,2d-1\})\]
and vice versa where $\ii:=\sqrt{-1}\in\C$ denotes the imaginary unit. This shows that Example \ref{szegö} above
is really just a ``real formulation'' of the odd degree case of \cite[Theorem 8.4]{jnt}.

%********************************************************************************
% Section: Cubature on the plane
%********************************************************************************

\section{Cubature on the plane}\label{plane}

\noindent
Recall that a polynomial $h\in\R[\x]$ is called \emph{square-free} if there is no irreducible polynomial $p\in\R[\x]$ such that $p^2$ divides
$h$ in $\R[\x]$. We begin with an easy lemma which must certainly well-know but for which we did not find a suitable reference.

\begin{lem}\label{quadratfrei}
Any square-free polynomial in $\R[X,Y]$ has only a finite number of double roots in $\C^2$.
\end{lem}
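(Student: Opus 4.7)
The plan is to reduce to working in $\C[X,Y]$, factor $h$ into distinct complex irreducibles, and then analyze the gradient with the product rule.

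First I would verify that square-freeness over $\R$ implies square-freeness over $\C$. Write $h = p_1 \cdots p_m$ as a product of pairwise non-associate irreducibles in $\R[X,Y]$. By Remark \ref{irred}, each $p_i$ is either irreducible in $\C[X,Y]$ or is associate in $\R[X,Y]$ to $q_i q_i^*$ for some irreducible $q_i \in \C[X,Y]$ not associate to $q_i^*$. I would then argue that no irreducible $q \in \C[X,Y]$ can occur in the $\C$-factorization of two distinct $p_i, p_j$: if it did, so would $q^*$, and therefore $p_i$ and $p_j$ would share a real irreducible factor (either an $\R[X,Y]$-associate of $q$ when $q$ and $q^*$ are $\C$-associate, by the construction in Remark \ref{irred}, or $qq^*$ otherwise), contradicting the non-associateness of the $p_i$ in $\R[X,Y]$. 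It is likewise immediate that within a single $p_i$ the two complex factors $q_i, q_i^*$ are distinct. Thus $h$ admits a factorization
\[h = \pi_1 \cdots \pi_r\]
into pairwise non-associate irreducibles in $\C[X,Y]$.

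Next I would apply the product rule. For any $(x,y) \in \C^2$,
\[\frac{\partial h}{\partial X}(x,y) = \sum_{i=1}^r \left(\prod_{j \ne i} \pi_j(x,y)\right) \frac{\partial \pi_i}{\partial X}(x,y),\]
and similarly for $Y$. If $(x,y)$ is a double root of $h$, then some factor vanishes there, say $\pi_1(x,y) = 0$; every term with $i \ne 1$ then vanishes, so
\[0 = \frac{\partial h}{\partial X}(x,y) = \left(\prod_{j \ne 1} \pi_j(x,y)\right) \frac{\partial \pi_1}{\partial X}(x,y),\]
and likewise for the $Y$-partial derivative. Hence either $\pi_j(x,y) = 0$ for some $j \ne 1$, or $(x,y)$ is a singular point of the irreducible curve $\pi_1 = 0$.

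It remains to check that both alternatives contribute only finitely many points. The finiteness of the intersection of the zero sets of $\pi_i$ and $\pi_j$ for $i \ne j$ follows from Bézout's theorem (or simply from the coprimality of distinct irreducibles in $\C[X,Y]$), and there are only finitely many pairs. The main obstacle is the remaining claim that an irreducible affine plane curve over $\C$ has only finitely many singular points; this is a classical fact from algebraic geometry, since the singular locus is a proper closed subvariety of an irreducible one-dimensional variety and hence zero-dimensional. Combining the two alternatives, the set of double roots of $h$ in $\C^2$ is contained in a finite union of finite sets and is therefore finite.
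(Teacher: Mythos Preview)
Your proof is correct. Both your argument and the paper's hinge on the product rule applied to an irreducible factorization, together with the fact that an irreducible plane curve has only finitely many singular points and that two coprime curves meet in only finitely many points. The organization, however, differs. You first transfer square-freeness from $\R[X,Y]$ to $\C[X,Y]$ via Remark~\ref{irred} and then work with the $\C$-irreducible factors $\pi_i$; your dichotomy (a double root lies either on the intersection of two distinct $\pi_i$ or on the singular locus of one of them) is then immediate from the product rule. The paper instead keeps the $\R$-irreducible factors $p_i$ and, rather than isolating singular points, shows via Hilbert's Nullstellensatz that not both partial derivatives of $h$ can vanish identically on the curve $\{p_i=0\}$, so that the double roots on that curve lie in a proper closed subvariety of a one-dimensional variety. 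Your route avoids the Nullstellensatz at the cost of the preliminary square-freeness transfer; the paper's route avoids that transfer but is slightly less direct and implicitly relies on the conjugation symmetry of the components of $\{p_i=0\}$ when $p_i$ is not $\C$-irreducible.
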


\begin{proof}
Let $h\in\R[X,Y]$ be square-free. Since $\R[X,Y]$ is a factorial ring, we can write
$h=cp_1\dotsm p_m$ for some $c\in\R^\times$, $m\in\N_0$ and irreducible pairwise non-associate $p_1,\dots,p_m\in\R[X,Y]$.
Fix $i\in\{1,\dots,m\}$. It is enough to show that the set of common complex zeros of $p_i$ and both partial derivatives
of $h$ is finite. Since $p_i$ defines the irreducible curve $C:=\{(x,y)\in\C^2\mid p_i(x,y)=0\}$,
it is enough to show that not both partial derivatives of $h$ vanish simultaneously on this curve.

Suppose for the moment that $\frac{\partial h}{\partial X}$ vanishes on $C$. We claim that then $\frac{\partial p_i}{\partial X}=0$.
Indeed, $p_i$ divides a power of $\frac{\partial h}{\partial X}$ in $\R[X,Y]$ by Hilbert's Nullstellensatz \cite[Chapter 1, §3, Proposition 3.7]{kun}
and therefore also $\frac{\partial h}{\partial X}$ itself since $p_i$ is irreducible.
Because $p_i$ now divides the left hand side as well as all but the $i$-th term in the sum on the right hand side of the equation
\[\frac{\partial h}{\partial X}=\sum_{j=1}^m\frac{\partial p_j}{\partial X}\prod_{k\ne j}p_k,\]
it also divides $\frac{\partial p_i}{\partial X}\prod_{k\ne i}p_k$. Because $p_1,\dots,p_m$ are pairwise non-associate,
it follows that $p_i$ divides $\frac{\partial p_i}{\partial X}$ which, for degree reasons, is only possible if
$\frac{\partial p_i}{\partial X}=0$.

If now both partial derivatives of $h$ vanished on $C$, then both partial derivatives of $p_i$ would be the zero polynomial, i.e., $p_i$ would
be a constant polynomial which is not possible by definition of an irreducible polynomial.
\end{proof}

\noindent
The following is a variant of \cite[Section 5.4, Theorem 2]{ful}.

\begin{prop}\label{hestim}
Let $h\in\R[X,Y]$ be square-free of degree $\ell$ and $h(x)\ge0$ for all $x\in\R^n$. Then $2\#Z(h)\le\ell(\ell-1)$.
\end{prop}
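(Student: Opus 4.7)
The plan is to reduce directly to the Bézout-type Theorem~\ref{newbezout}, applied to $g:=h$ and (in the role of the ``$h$'' in that theorem) the partial derivative $\tfrac{\partial h}{\partial X}$. First I would dispose of the trivial case $Z(h)=\emptyset$. Otherwise, since $h\ge0$ on $\R^2$, Remark~\ref{nonn} tells us that every point of $Z(h)$ is a double root of $h$, so in particular $\tfrac{\partial h}{\partial X}$ vanishes on $Z(h)$.

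To invoke Theorem~\ref{newbezout} with the pair $(h,\tfrac{\partial h}{\partial X})$, I need to check three things: (i) $\tfrac{\partial h}{\partial X}\ne 0$, (ii) $Z(h)\cap Z(\tfrac{\partial h}{\partial X})$ is finite, and (iii) $\tfrac{\partial h}{\partial X}(x)\ge0$ for all $x\in Z(h)$. Condition (iii) is immediate, since $\tfrac{\partial h}{\partial X}$ actually vanishes on $Z(h)$. For (i), I would argue by contradiction: if $\tfrac{\partial h}{\partial X}=0$, then $h$ is a univariate polynomial in $Y$, square-free and nonnegative on $\R$; but a real root of a nonnegative univariate polynomial has even multiplicity, which is incompatible with square-freeness, so such an $h$ has no real roots at all, contradicting $Z(h)\ne\emptyset$. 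For (ii), I invoke the square-freeness hypothesis via Lemma~\ref{quadratfrei}, which says that the set of double roots of $h$ in $\C^2$ is finite; since every point of $Z(h)$ is such a double root, $Z(h)$ itself is finite, and therefore so is $Z(h)\cap Z(\tfrac{\partial h}{\partial X})\subseteq Z(h)$.

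With the three hypotheses in hand, Theorem~\ref{newbezout} yields
\[
2\,\#\bigl(Z(h)\cap Z(\tfrac{\partial h}{\partial X})\bigr)\ \le\ (\deg h)\bigl(\deg\tfrac{\partial h}{\partial X}\bigr)\ \le\ \ell(\ell-1).
\]
Since $Z(h)\subseteq Z(\tfrac{\partial h}{\partial X})$ by Remark~\ref{nonn}, the intersection on the left equals $Z(h)$, and the desired bound $2\#Z(h)\le\ell(\ell-1)$ follows.

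The only subtle point is really the joint use of \emph{nonnegativity} (which forces real zeros to be double, giving step (iii) for free and ensuring $Z(h)$ sits inside $Z(\tfrac{\partial h}{\partial X})$) and \emph{square-freeness} (which, via Lemma~\ref{quadratfrei}, rules out the possibility that $Z(h)$ is infinite and also rules out the degenerate case $\tfrac{\partial h}{\partial X}\equiv 0$ with non-empty real zero set). Once these two ingredients are combined, Theorem~\ref{newbezout} does all the remaining work.
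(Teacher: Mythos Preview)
Your proof is correct and follows essentially the same route as the paper: reduce to Theorem~\ref{newbezout} applied to $h$ and a partial derivative, using Remark~\ref{nonn} to get that real zeros are double and Lemma~\ref{quadratfrei} to get finiteness of $Z(h)$. The only cosmetic difference is that the paper simply says ``$\ell>0$, so some partial derivative is nonzero, WLOG $\partial h/\partial X$,'' whereas you fix $\partial h/\partial X$ and argue separately (via the univariate square-free observation) that it cannot vanish when $Z(h)\ne\emptyset$; both work.
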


\begin{proof}
This is clear if $\ell=0$. So suppose $\ell>0$. Then at least one of both partial derivatives of $h$ is
not the zero polynomial, say $g:=\frac{\partial h}{\partial X}\ne0$. By Remark \ref{nonn}, $g(x)=0\ge0$ for all $x\in Z(h)$. If
$Z(h)\cap Z(g)=Z(h)$ is finite, we can conclude by Theorem \ref{newbezout} (with the roles of $g$ and $h$ interchanged).
Therefore it remains only to show that $Z(h)$ is finite. But this follows from Remark~\ref{nonn} together with Lemma \ref{quadratfrei}.
\end{proof}

\noindent
We are now ready to state the main result of this section.

\begin{thm}\label{planethm}
Let $d\in\N$ and $L\in M_{2d-1}(\R^2)$.
Then there exists a cubature rule $w\colon N\to\R_{>0}$ for $L$ with
\[\#N\le\rh(2,2d)\le\frac32d(d-1)+1\] that minimizes
\[\sum_{(x,y)\in N}w(x,y)(x^{2d}+y^{2d})\]
among all cubature rules for $L$.
\end{thm}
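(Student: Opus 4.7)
The plan is to apply Theorem \ref{genthm} with $n=2$ and penalty $f := X^{2d} + Y^{2d}$, combined with the Petrovsky inequality (Theorem \ref{petro}), which yields $\rh(2, 2d) \le \frac32 d(d-1) + 1$. The leading form $f_{2d} = f$ is strictly positive on all of $\R\P^1 = (\R^2)_\infty$, so Theorem \ref{genthm} is a priori applicable; the one technical hypothesis to worry about is $L(p) > 0$ for every $p \in P_{2d-2}(\R^2) \setminus \{0\}$, which need not hold for an arbitrary $L \in M_{2d-1}(\R^2)$.

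I would remove this hypothesis by a perturbation argument. Fix a reference functional $L_0 \in M_{2d-1}(\R^2)$ satisfying $L_0(p) > 0$ on $P_{2d-2}(\R^2) \setminus \{0\}$; the truncated Riesz functional of the standard Gaussian measure works. For each $\varepsilon > 0$, set $L_\varepsilon := L + \varepsilon L_0$. It lies in $M_{2d-1}(\R^2)$ and satisfies the strict positivity hypothesis, so Theorem \ref{genthm} produces a cubature rule $w_\varepsilon \colon N_\varepsilon \to \R_{>0}$ for $L_\varepsilon$ with $\#N_\varepsilon \le \rh(2, 2d)$ minimizing the $f$-moment.

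The core of the argument is then a compactness step as $\varepsilon \to 0$. Let $V$ denote the infimum of $\sum w(x,y)(x^{2d}+y^{2d})$ over all cubature rules for $L$; by Bayer and Teichmann (Theorem \ref{bavaria}) such rules exist with finite support, so $V$ is finite. Fixing once and for all a cubature rule for $L_0$ with $f$-moment $M_0 < \infty$, the analogous infimum for $L_\varepsilon$ is at most $V + \varepsilon M_0$, hence $\sum w_\varepsilon(x,y)(x^{2d}+y^{2d}) \le V + \varepsilon M_0$. This forces the total weight $L_\varepsilon(1) = L(1) + \varepsilon L_0(1)$ to be bounded and every node $(x,y) \in N_\varepsilon$ to satisfy $w_\varepsilon(x,y) \le (V + \varepsilon M_0)/(x^{2d} + y^{2d})$. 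Encoding $w_\varepsilon$ as a $\rh(2, 2d)$-tuple of weight-node pairs in $[0, L(1) + L_0(1)] \times (\R^2 \cup \{\infty\})$ (padded with zero-weight entries if $\#N_\varepsilon < \rh(2, 2d)$), I extract a convergent subsequence; any node escaping to infinity must have weight tending to $0$. Collecting the finite limit points with positive limit weight yields a candidate rule $w \colon N \to \R_{>0}$ with $\#N \le \rh(2, 2d)$.

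The step I expect to be the main obstacle is verifying that $w$ is actually a valid cubature rule for $L$, that is, that escaping nodes contribute nothing to the limit of the quadrature sum. This is where the precise growth rate of $f = X^{2d} + Y^{2d}$ is essential: for every $p \in \R[X,Y]_{2d-1}$ one has $|p(x,y)| \le C(1 + \|(x,y)\|^{2d-1})$, while the bound $w_\varepsilon(x,y)(x^{2d}+y^{2d}) \le V + \varepsilon M_0$ forces $w_\varepsilon(x,y) |p(x,y)| \to 0$ along any escaping node. Passing to the limit in $L_\varepsilon(p) = \sum_{(x,y) \in N_\varepsilon} w_\varepsilon(x,y) p(x,y)$ then gives $L(p) = \sum_{(x,y) \in N} w(x,y) p(x,y)$ for all $p \in \R[X,Y]_{2d-1}$, so $w$ is a cubature rule for $L$. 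Finally, the Fatou-type inequality $\sum_{(x,y) \in N} w(x,y)(x^{2d}+y^{2d}) \le \liminf_{\varepsilon \to 0} \sum w_\varepsilon(x,y)(x^{2d}+y^{2d}) \le V$, combined with $\sum w(x,y)(x^{2d}+y^{2d}) \ge V$ (since $w$ is a cubature rule for $L$), shows that $w$ attains the minimum.
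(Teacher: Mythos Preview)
Your perturbation--compactness argument is correct, and it is genuinely different from the paper's proof. The paper does not perturb $L$; instead it treats the degenerate case $L(p)=0$ for some $p\in P_{2d-2}(\R^2)\setminus\{0\}$ head-on by factoring $p=g^2h$ with $h$ square-free, observing that every node of an optimal rule lies in $Z(g)\cup Z(h)$, and then splitting the rule into a part supported on the curve $Z(g)$ and a part supported on the finite set $Z(h)$. The curve part is re-optimized using Theorem~\ref{curvethm} (which costs at most $d\cdot\deg g$ nodes via the B\'ezout-type Theorem~\ref{newbezout}), while the finite part is bounded by Proposition~\ref{hestim}; a short arithmetic inequality then gives $\#N\le d^2\le\rh(2,2d)$. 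Your route bypasses all of Section~\ref{curve}: you never invoke Theorem~\ref{curvethm} or the B\'ezout machinery, relying only on Theorem~\ref{genthm}, the Petrovsky inequality, and soft compactness (the growth estimate $|p|\lesssim\|(x,y)\|^{2d-1}$ versus $x^{2d}+y^{2d}\gtrsim\|(x,y)\|^{2d}$ kills escaping nodes, and your Fatou step is valid because the discarded terms are nonnegative). What the paper's approach buys in return is structural information in the degenerate case---an explicit decomposition of the rule into a curve-supported piece and an isolated-points piece---and a constructive argument that does not pass through a limit; your approach buys a shorter dependency chain and would generalize more readily to situations where no curve theorem is available.
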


\begin{proof}
If $L(p)>0$ for all $p\in P_{2d-2}(\R^2)\setminus\{0\}$, then we conclude by Theorem~\ref{genthm} and the Petrovsky inequality \ref{petro}.
Therefore suppose now that we have fixed $p\in P_{2d-2}(\R^2)\setminus\{0\}$ such that $L(p)=0$. Now write
\[p=g^2h\] with $g,h\in\R[X,Y]\setminus\{0\}$ such that $h$ is square-free.
We see that $h(x)\ge0$ for all $x\in\R^2\setminus Z(g)$ and by continuity even for all $x\in\R^2$.
Define $k,\ell\in\N_0$ by $k=\deg g$ and $2\ell=\deg h$ (note that the degree of $h$ is even since the degree of $p$ is even) so that
\[k+\ell\le d-1.\]
Using Proposition
\ref{primsol}, we can choose an optimal solution $\La_0$ of $(P_{L,X^{2d}+Y^{2d},\R^2})$ together with a quadrature rule
$w_0\colon N_0\to\R_{>0}$ for $\La_0$. Because of $L(p)=0$, we have $N_0\subseteq Z(p)=Z(g)\cup Z(h)$. 
Now set $N_1:=N_0\cap Z(g)$ and $N_2:=N_0\setminus Z(g)\subseteq Z(h)$. For $i\in\{1,2\}$, we set now $w_i:=w_0|_{N_i}$ and denote by
$L_i\in\R[X,Y]_{2d-1}^*$ the linear form having $w_i$ as quadrature rule.
We have $L_1\in M_{2d-1}(Z(g))$ and $\#N_2\le\#Z(h)\le
\frac{\ell(\ell-1)}2$ by Proposition \ref{hestim}. Now we apply Theorem \ref{curvethm} to see that there exists a quadrature rule
$w'\colon N'\to\R_{>0}$ for $L_1$ with $N'\subseteq Z(g)$ such that $\#N'\le dk$
that minimizes
\[\sum_{(x,y)\in N'}w'(x,y)(x^{2d}+y^{2d})\]
among all quadrature rules for $L_1$ whose nodes are all contained in $Z(g)$. In particular,
\begin{equation}\tag{$*$}\label{mineq}
\sum_{(x,y)\in N'}w'(x,y)(x^{2d}+y^{2d})\le\sum_{(x,y)\in N_1}w_1(x,y)(x^{2d}+y^{2d}).
\end{equation}
Now finally define $w\colon N:=N'\dotcup N_2\to\R_{>0}$ by $w|_{N'}=w'$ and $w|_{N_2}=w_2$. Then $w$ is a quadrature rule
for $L_1+L_2=L$ and \[\#N=\#N'+\#N_2\le dk+\frac{\ell(\ell-1)}2.\]
Adding the same term to both sides of \eqref{mineq}, we obtain
\[
\sum_{(x,y)\in N}w(x,y)(x^{2d}+y^{2d})\le\sum_{(x,y)\in N_0}w_0(x,y)(x^{2d}+y^{2d}).
\]
By the optimality of $\La_0$, $w_0$ minimizes the right hand side in this inequality among all quadrature rules for $L$.
Therefore $w$ minimizes the left hand side (which now turns out to be equal to the right hand side)
of this inequality among all quadrature rules for $L$. It remains only to show that
\[dk+\frac{\ell(\ell-1)}2\le\rh(2,2d).\]
By the trivial part of the Petrovsky inequality \ref{petro}, it suffices to show 
\[2dk+\ell(\ell-1)\le2d^2,\]
or equivalently
\[\ell(\ell-1)\le2d(d-k).\]
Using $d-k\ge\ell+1\ge\ell$, we reduce this to
\[\ell(\ell-1)\le2d\ell\]
which is trivially true.
\end{proof}

\begin{cor}\label{planecor}
For all $d\in\N$ and $L\in M_{2d-1}(\R^2)$, there
exists a cubature rule for $L$ with at most $\rh(2,2d)\le\frac32d(d-1)+1$ many nodes.
\end{cor}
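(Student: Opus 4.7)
The plan is to derive this statement as an immediate consequence of Theorem \ref{planethm}, which was the real workhorse of this section. Theorem \ref{planethm} already produces, for every $L \in M_{2d-1}(\R^2)$, a cubature rule $w\colon N \to \R_{>0}$ for $L$ satisfying the node bound $\#N \le \rho(2,2d)$ together with the additional property that $w$ minimizes the functional $\sum_{(x,y)\in N} w(x,y)(x^{2d}+y^{2d})$ among all cubature rules for $L$. The corollary asks only for the existence of a cubature rule with the stated cardinality; so the proof amounts to invoking Theorem \ref{planethm} with the same $d$ and $L$ and then discarding the minimality clause from the conclusion.

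The upper bound $\rho(2,2d) \le \tfrac{3}{2}d(d-1)+1$ is not something one needs to re-establish here either: it is precisely the nontrivial direction of the Petrovsky inequality recorded as Theorem \ref{petro}, and it has already been incorporated in the statement of Theorem \ref{planethm}. In short, there is no independent obstacle to overcome, and no new computation is required. All of the genuine work — the setup of the conic program $(P_{L,f,\R^2})$, the dichotomy between the case $L(p) > 0$ for all $p \in P_{2d-2}(\R^2)\setminus\{0\}$ (dispatched via Theorem \ref{genthm}) and the degenerate case in which some such $p = g^2 h$ annihilates $L$ (handled by splitting the support along $Z(g)$ and $Z(h)$ and invoking the plane-curve result Theorem \ref{curvethm}), together with the bookkeeping $dk + \tfrac12\ell(\ell-1) \le \rho(2,2d)$ — has already been carried out. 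Thus the proof of the corollary is literally a one-line citation of Theorem \ref{planethm}.
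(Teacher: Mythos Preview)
Your proposal is correct and matches the paper's approach exactly: the corollary is stated without proof in the paper because it is an immediate consequence of Theorem~\ref{planethm}, simply discarding the minimality clause. The bound $\rho(2,2d)\le\frac32d(d-1)+1$ is indeed just the Petrovsky inequality~\ref{petro}, already absorbed into the statement of Theorem~\ref{planethm}.
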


\noindent
We don't have any reason to expect the above upper bound of $\frac32d(d-1)+1$ to be optimal but we 
conclude the section by showing that at least it cannot be improved strictly below $(d-1)^2$. The idea underlying
this is folklore and contained in the following proposition.

\begin{prop}\label{evlu}
Let $d\in\N_0$ and $f\in P_{2d}(\R^n)$ such that that the family $(\ev_x)_{x\in Z(f)}$ is linearly independent in
$\R[\x]_{2d}^*$ (in particular, $Z(f)$ is finite) where
\[\ev_x\colon\R[\x]_{2d}\to\R,\ p\mapsto p(x)\] denotes the point evaluation in $x$ for each $x\in\R^n$.
Then for each $w\colon Z(f)\to\R_{>0}$, the linear form
\[L\colon\R[\x]_{2d}\to\R,\ p\mapsto\sum_{x\in Z(f)}w(x)p(x)\]
has exactly one quadrature rule, namely $w$.
\end{prop}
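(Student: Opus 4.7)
The plan is to exploit two facts in tandem: the pointwise nonnegativity of $f$ (which forces any quadrature for $L$ to have its nodes inside $Z(f)$) and the linear independence hypothesis (which then pins down the weights uniquely).

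\textbf{Step 1: Any quadrature rule for $L$ must be supported in $Z(f)$.} Let $w' : N \to \R_{>0}$ be an arbitrary quadrature rule for $L$. Since $f \in \R[\x]_{2d}$, we may apply the defining identity of both $L$ and the quadrature rule $w'$ to $f$ itself. On the one hand,
\[
L(f) \;=\; \sum_{x \in Z(f)} w(x) f(x) \;=\; 0,
\]
because $f$ vanishes on $Z(f)$ by definition. On the other hand,
\[
L(f) \;=\; \sum_{x \in N} w'(x) f(x),
\]
and every summand is nonnegative, since $f \ge 0$ on $\R^n$ and $w'(x) > 0$ for $x \in N$. Thus each summand is zero, and since $w'(x) > 0$ this forces $f(x) = 0$ for every $x \in N$, i.e.\ $N \subseteq Z(f)$.

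\textbf{Step 2: Uniqueness of the weights via linear independence.} Extend $w'$ by zero to a function $\tilde w' : Z(f) \to \R_{\ge 0}$, setting $\tilde w'(x) := 0$ for $x \in Z(f) \setminus N$. Since $N \subseteq Z(f)$, the two representations of $L$ yield the equality in $\R[\x]_{2d}^*$
\[
\sum_{x \in Z(f)} w(x)\, \ev_x \;=\; L \;=\; \sum_{x \in Z(f)} \tilde w'(x)\, \ev_x,
\]
or equivalently $\sum_{x \in Z(f)} (w(x) - \tilde w'(x))\, \ev_x = 0$. By the hypothesis that $(\ev_x)_{x \in Z(f)}$ is linearly independent in $\R[\x]_{2d}^*$, we conclude $w(x) = \tilde w'(x)$ for every $x \in Z(f)$. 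Since $w(x) > 0$ on all of $Z(f)$, this in turn forces $\tilde w'(x) > 0$, hence $x \in N$. Therefore $N = Z(f)$ and $w' = w$.

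The only step that requires more than symbol-pushing is Step 1, and even that is essentially a one-line sign argument relying on the crucial hypothesis $f \in P_{2d}(\R^n)$. Step 2 is then a direct translation of the linear independence assumption into the uniqueness of the coefficients.
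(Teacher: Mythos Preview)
Your proof is correct and follows essentially the same approach as the paper: first use $L(f)=0$ together with $f\ge0$ and positivity of the weights to force $N\subseteq Z(f)$, then extend the arbitrary rule by zero and invoke linear independence of the evaluations to conclude uniqueness. Your write-up is simply a more detailed version of the paper's terse argument.
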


\begin{proof}
Let $u\colon N\to\R$ be any quadrature rule for $L$. We have to show $u=w$.
Since $L(f)=0$, it follows that $N\subseteq Z(f)$. Extend $u$ to
$u_0\colon Z(f)\to\R$ by setting it to zero outside of $N$. Then
$\sum_{x\in Z(f)}w(x)\ev_x=L=\sum_{x\in Z(f)}u_0(x)\ev_x$ which implies $w=u_0$ by the linear independence of $(\ev_x)_{x\in Z(f)}$.
In hindsight, we get now $u=u_0$ whence $u=w$.
\end{proof}

\begin{lem}\label{lemlu}
Let $d\in\N_0$. The family $(\ev_{(x,y)})_{(x,y)\in\{1,\dots,d\}^2}$ is linearly independent in $\R[\x]_{2d}^*$
where $\ev_{(x,y)}\colon\R[\x]_{2d}\to\R,\ p\mapsto p(x,y)$ denotes the point evaluation in $(x,y)$ for every
$(x,y)\in\{1,\dots,d\}^2$.
\end{lem}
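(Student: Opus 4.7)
The plan is to prove linear independence by a standard separator (Lagrange-type) argument, exploiting the fact that a tensor-product grid of $d \times d$ points can be separated by polynomials of bidegree $(d-1, d-1)$, whose total degree $2d-2$ is comfortably within the allowed bound $2d$.

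I would start by supposing a linear relation
\[
\sum_{(a,b) \in \{1,\ldots,d\}^2} c_{a,b}\, \ev_{(a,b)} = 0 \quad \text{in } \R[X,Y]_{2d}^*,
\]
and aim to conclude $c_{a,b} = 0$ for every $(a,b)$. The key step is, for each fixed grid point $(a_0, b_0) \in \{1,\ldots,d\}^2$, to exhibit a polynomial that singles it out. The natural candidate is the tensor Lagrange separator
\[
p_{a_0, b_0}(X,Y) := \prod_{\substack{i=1 \\ i \neq a_0}}^{d} (X - i) \cdot \prod_{\substack{j=1 \\ j \neq b_0}}^{d} (Y - j),
\]
which has total degree $(d-1) + (d-1) = 2d - 2 \leq 2d$, hence lies in $\R[X,Y]_{2d}$. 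By construction $p_{a_0,b_0}(a,b) = 0$ whenever $(a,b) \in \{1,\ldots,d\}^2 \setminus \{(a_0,b_0)\}$, since then $a \neq a_0$ kills the $X$-factor or $b \neq b_0$ kills the $Y$-factor, while at $(a_0, b_0)$ itself
\[
p_{a_0, b_0}(a_0, b_0) = \prod_{i \neq a_0}(a_0 - i) \cdot \prod_{j \neq b_0}(b_0 - j) \neq 0.
\]

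Applying the assumed linear relation to each $p_{a_0, b_0}$ then forces $c_{a_0, b_0}\, p_{a_0, b_0}(a_0, b_0) = 0$, and since the evaluation is non-zero we obtain $c_{a_0, b_0} = 0$. Ranging over all $(a_0, b_0)$ yields the desired linear independence.

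I do not foresee any real obstacle here: the crux is simply the degree count, namely that the $d \times d$ grid sits inside the zero locus of a product of $2(d-1)$ axis-parallel lines, and $2(d-1) \leq 2d$. Everything else is the elementary one-dimensional Lagrange interpolation argument applied separately in each variable.
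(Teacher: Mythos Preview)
Your proof is correct and is essentially the same idea as the paper's, namely tensor-product Lagrange interpolation on the $d\times d$ grid. The paper phrases it dually---arguing that any function on $\{1,\dots,d\}^2$ can be interpolated by a polynomial of degree at most $2d$ (by reducing modulo the ideal generated by $(X-1)\dotsm(X-d)$ and $(Y-1)\dotsm(Y-d)$)---whereas you write down the explicit separators $p_{a_0,b_0}$ of degree $2d-2$; your version is slightly more direct and even gives the sharper degree bound the paper mentions only parenthetically.
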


\begin{proof}
It suffices obviously to show that any function $\{1,\dots,d\}^2\to\R$ can be interpolated by a polynomial $p\in\R[X,Y]_{2d}$.
Such a function can certainly by interpolated by a polynomial $p\in\R[X,Y]$ of some degree and adding any polynomial from
the ideal generated in $\R[X,Y]$
by $(X-1)\dotsm(X-d)$ and $(Y-1)\dotsm(Y-d)$ will not change the values of this polynomial on the grid
$\{1,\dots,d\}^2$. But modulo this ideal any polynomial is congruent to a linear combination of the monomials $X^iY^j$ with
$0\le i,j\le d$, in particular to a polynomial of degree at most $2d$ (in case $d>0$ one can even do with $0\le i,j\le d-1$ so that
one gets a sharper result but we do not need this).
\end{proof}

\begin{rem}\label{lowb}
Let $d\in\N$ and take \[f:=(X-1)^2\dotsm(X-(d-1))^2+(Y-1)^2\dotsm(Y-(d-1))^2\in P_{2d-2}(\R^n)\] so that
$Z(f)=\{1,\dots,d-1\}^2$ (cf. Remark \ref{roughlowerbound}). Choose $w\colon Z(f)\to\R_{>0}$ and define
\[L\colon\R[X]_{2d-1}\to\R,\ p\mapsto\sum_{x\in Z(f)}w(x)p(x).\]
By Proposition \ref{evlu} and Lemma \ref{lemlu}, $L$ (even $L|_{\R[X]_{2d-2}}$) has exactly one quadrature rule.
But this quadrature rule is $w$ and has
\[(d-1)^2\]
many nodes. Therefore the upper bound in Corollary \ref{planecor} cannot be improved from
$\frac32d(d-1)+1$ to something strictly below $(d-1)^2$.
\end{rem}

\noindent
One could ask about possible improvements of the upper bound $\frac32d(d-1)+1$ in Corollary \ref{planecor}
under the additional hypothesis $L(p)>0$ for all $p\in P_{2d-2}(\R^2)\setminus\{0\}$ from Theorem \ref{genthm}. In this case,
the idea of Remark \ref{lowb} does not work.
The following remark which we owe to di Dio and Schmüdgen \cite{dds} gives another lower bound
under this additional hypothesis.

\begin{rem}\label{dimcount}
$M_{2d-1}(\R^2)$ is a cone with nonempty interior (e.g., by Remark \ref{intdua})
in the vector space $\R[X,Y]_{2d-1}^*$ which has dimension $d(2d+1)$.
For $k\in\N_0$,
consider the semialgebraic set $A\subseteq M_{2d-1}(\R^2)$ consisting of all linear forms on $\R[X,Y]_{2d-1}^*$ that possess
a quadrature formula with at most $k$ nodes. Obviously, $A$ is the image of a polynomial (in particular semialgebraic)
map $\R^{3k}\to\R[X,Y]_{2d-1}^*$
and therefore is a semialgebraic set of dimension at most $3k$ \cite[Theorem~2.8.8]{bcr}. In order that
$A$ can have nonempty interior, we must have $3k\ge d(2d+1)$ by
\cite[Section~2.8]{bcr}, i.e., $k\ge\left\lceil\frac{d(2d+1)}3\right\rceil$.
This shows that the linear forms that possess a quadrature rule but do not possess one with less than
\[\left\lceil\frac{d(2d+1)}3\right\rceil\]
many nodes lie dense in $M_{2d-1}(\R^2)$.
\end{rem}

\noindent
{The problem with Remarks \ref{lowb} and
\ref{dimcount} is that they say little or nothing about the number of nodes for quadrature of
concrete measures one encounters in practice.}
Indeed, the only {nontrivial lower bounds for concrete measures} that seem to be known are
the results of Möller \cite{mol} and {Cools et al. \cite{vc,cs,hs}}. Möller gives for each $d\in\N$ 
a huge family of $L\in M_{2d-1}(\R^2)$
satisfying $L(p)>0$ for all $p\in P_{2d-2}(\R^2)\setminus\{0\}$ such that any quadrature rule for
$L$ needs at least 
\[\frac{d(d+1)}2+\left\lfloor\frac d2\right\rfloor\]
many nodes. For $d\ge5$, Möller's lower bound has been improved in \cite{vc,cs,hs}
for small subfamilies of his family by exactly one thus yielding
\[\frac{(d+1)^2}2.\]
Note that these results became an easy rank argument if one wanted to show only the lower bound of
\[\frac{d(d+1)}2\]
for $d\in\N$ which is usually attributed to Stroud.
Indeed, the additional hypothesis $L(p)>0$ for all $p\in P_{2d-2}(\R^2)\setminus\{0\}$ means that
$(L(\x^{\al+\be}))_{\al,\be\in\N_0^n,|\al|\le d-1,|\be|\le d-1},$ regarded as a symmetric matrix, is positive definite and thus has
full rank
\[\frac{d(d+1)}2=\binom{d-1+2}2=\dim\R[X,Y]_{d-1}.\]
Even if one allowed negative weights in quadrature rules, one needed at least as many nodes as the rank of this matrix indicates
by an easy exercise which we leave to the reader.
Also the just mentioned lower bounds of Möller and of Cools and Schmid continue to work if one allowed in a quadrature arbitrary real
weights instead of just positive ones.

\bigskip\noindent
We conclude this section with the following table that illustrates the quality of our upper bound from Corollary
\ref{planecor}.

{\small
\[
\begin{array}{ccc|cccccccccc}
&&d&1&2&3&4&5&6&7&8&9&10\\
&\begin{matrix}\text{\tiny Degree of}\\\text{\tiny exactness}\end{matrix}&2d-1&1&3&5&7&9&11&13&15&17&19\\
\hline
&&&\\
\text A&\begin{matrix}\text{\tiny Stroud's trivial}\\\text{\tiny lower bound}\end{matrix}&\frac{d(d+1)}2&
1&3&6&10&15&21&28&36&45&55\\
&&\\
\text B&\text{\tiny Möller's lower bound}&\frac{d(d+1)}2+\left\lfloor\frac d2\right\rfloor&1&4&7&12&17&24&31&40&49&60\\
&&\\
\text C&\begin{matrix}\text{\tiny Best known upper}\\\text{\tiny bound for the square}
\end{matrix}
&\blacksquare&1^{\rlap*}&4^{\rlap*}&7^{\rlap*}&12^{\rlap*}&17^{\rlap*}&24^{\rlap*}&33&44&56&68\\
&&\\
\text D&\begin{matrix}\text{\tiny Best known upper}\\\text{\tiny bound for the disk}
\end{matrix}
&\text{\LARGE$\bullet$}&1^{\rlap*}&4^{\rlap*}&7^{\rlap*}&12^{\rlap*}&18^{\rlap{*\tiny O}}&25^{\rlap{\tiny O}}&34^{\rlap{\tiny O}}&44&57&69^{\rlap{\tiny O}}\\
&&\\
\text E&\begin{matrix}\text{\tiny di Dio and Schmüdgen's}\\\text{\tiny lower bound}\\\text{\tiny from Remark \ref{dimcount}}
\\\text{\tiny\cite[Thms. 27 and 53]{dds}}
\end{matrix}
&\left\lceil\frac{d(2d+1)}3\right\rceil&1&4&7&12&19&26&35&46&57&70\\
&&\\
\text F&\begin{matrix}\text{\tiny Our lower bound}\\\text{\tiny from Remark \ref{lowb}}\end{matrix}
&(d-1)^2&0&1&4&9&16&25&36&49&64&81\\
&&\\
\text G&\begin{matrix}\text{\tiny Our upper bound}\\\text{\tiny from Corollary \ref{planecor}}\end{matrix}&\frac32d(d-1)+1&
10&4&10&19&31&46&64&85&109&136\\
&&\\
\text H&&d^2&1&4&9&16&25&36&49&64&81&100\\   
&&\\
\text I&\begin{matrix}\text{\tiny Carathéodory's trivial}\\\text{\tiny upper bound}\end{matrix}&d(2d+1)&
3&10&21&36&55&78&105&136&171&210\\
\end{array}
\]
}

\noindent
The row above the horizontal line contains the prescribed odd degree of exactness $2d-1$.
Rows C and D contain the minimal number of nodes we have found in the literature for the Lebesgue measure
on a square and a disk in $\R^2$. Here a ``$*$'' indicates that this is proven to be
the minimal number of nodes a quadrature rule can have and an
``O'' means that we found a quadrature rule with this number of nodes in the
literature but we did not find one with all points inside the support. The literature we used for this
is \cite{cr,co2,ck,os} and the references therein. Since the square and the disk are centrally symmetric, the Möller lower bound
from \cite{mol} applies to them. It is listed in Row B. Recently, di Dio and Schmüdgen provided lower bounds which we list in Row E.
Our lower bound from Remark
\ref{lowb}, which is much better except
for small values of $d$, is listed in Row F. Our upper bound from Corollary \ref{planecor} is given in Row G. At the moment, it seems conceivable that this upper bound could be improved to $d^2$ which
we have therefore listed in Row H. In fact, it seems to be even an open problem if moreover
the inequality from \ref{petro} can be improved to $\rh(2,2d)\le d^2$ for all $d\in\N$.
Note also that for the Lebesgue measure on a square (as for other products of two measures on $\R$), you get the
Gauss product rule with $d^2$ many nodes.

%********************************************************************************
% Section: New facts about Gaussian quadrature on the line
%********************************************************************************

\section{New facts about Gaussian quadrature on the line}\label{line}

\noindent
Throughout the section, fix
\begin{itemize}
\item $d,m\in\N$,
\item $L\in M_{2d-1}(\R)$,
\item $\xi\in\R$,
\item a locally Lipschitz continuous function $f\colon\R^m\to\R$ such that \[\{x'\in\R^m\mid f(x')\le f(x)\}\] is compact for all $x\in\R^m$
and the generalized gradient $\partial f(x)\subseteq\R^m$
of $f$ at $x$ recalled below satisfies the following properties for every $x\in\R^n$:
\begin{itemize}
\item if $0\in\partial f(x)$, then $x=(\xi,\dots,\xi)$,
\item $\pi_i(\partial f(x))\subseteq\R_{\le0}$ if $x_i<\xi$ and
\item $\pi_i(\partial f(x))\subseteq\R_{\ge0}$ if $x_i>\xi$,
\end{itemize}
\end{itemize}
where $\pi_i:\R^m\to\R$ is the projection on the $i$-th coordinate.
It follows from Rademacher's theorem \cite[3.1.6]{fed} that there is a Lebesgue zero set $A\subseteq\R^m$
such that $f$ is differentiable at all points of $\R^m\setminus A$ and by \cite[Theorem 2.5.1]{cla}, the generalized gradient
$\partial f(x)$ at $x$ equals, independently of the choice of such a set $A$, the convex hull in $\R^m$
of all $\lim_{i\to\infty}\nabla f(x_i)$
such that $(x_i)_{i\in\N}$ is a sequence in $\R^n\setminus A$ with the properties that
$\lim_{i\to\infty}x_i=x$ and that the sequence of gradients
$(\nabla f(x_i))_{i\in\N}$ converges.
In particular, if $U\subset\R^m$ is open such that $f|_U$ is continuously differentiable on $U$, then $\partial f(x)=\{\nabla f(x)\}$
is a singleton for each $x\in U$. In general,
$\partial f(x)$ is for each $x\in\R^m$ a non-empty compact convex subset of $\R^n$ by \cite[Proposition 2.1.2]{cla}.
One might therefore also just think of a continuously differentiable function $f$ and think if its usual gradient.
This includes but also excludes important cases.
For readers that are not familiar with nonsmooth analysis it is instructive to consider \cite[Example 2.5.2]{cla}.

\bigskip\noindent
Typically, $L$ would be defined by $L:=L_{\mu,2d-1}$ for a measure $\mu$ on $\R$ with finite moments up to degree $2d-1$,
$\xi$ would be a point on the real line such that one desires to have the nodes $x_i$ of a quadrature rule for $\mu$ with a
prescribed maximum number of nodes $m$ near to $\xi$ (e.g., because $\mu$ has a lot of mass lies near $\xi$ or because $\xi$ is the mean, a median or a mode of $\mu$). The idea of the
\emph{penalty function} $f$ is that it penalizes for nodes $x_i$ far from $\xi$. The most typical
choices of $f$ satisfying the above axioms are given by
\[f(x)=|x_1-\xi|+\ldots+|x_m-\xi|\quad\text{for $x\in\R^m$}\]
where
$\partial f(x)=\conv\{z\in\{-1,1\}^m\mid z_i=\sgn(x_i-\xi)\text{ if $x_i\ne\xi$}\}$ for $x\in\R^m$,
by
\[f(x)=\max\{|x_1-\xi|,\dots,|x_m-\xi|\}\quad\text{for $x\in\R^m$}\]
where
$\partial f(\xi)=\conv\{e_1,-e_1,e_2,-e_2,\dots,e_m,-e_m\}$ and\\
$\partial f(x)=\conv\{\sgn(x_i-\xi)e_i\mid|x_i-\xi|=f(x)\}$ for $x\in\R^m\setminus\{(\xi,\dots,\xi)\}$\\
\cite[Proposition 2.3.12(b), Proposition 2.3.6(b)]{cla}, or by
\[f(x)=|x_1-\xi|^\al+\ldots+|x_m-\xi|^\al\quad\text{for $x\in\R^m$}\]
where $\al\in\R_{>1}$ is fixed and $\partial f(x)$ is the singleton
\[\partial f(x)=\left\{\al\left(\begin{smallmatrix}(\sgn(x_1-\xi))|x_1-\xi|^{\al-1}\\\vdots\\(\sgn(x_m-\xi))|x_m-\xi|^{\al-1}\end{smallmatrix}\right)\right\}\]
for $x\in\R^m$.

\bigskip\noindent
Now consider the following optimization problem:
\[\label{QOP}
\begin{array}{llll}
(Q_{L,f,m})&\text{minimize}&f(x)\\
&\text{subject to}&a_1,\dots,a_m,x_1,\dots,x_m\in\R\\
&&
\begin{array}{rcl}
\sum\limits_{i=1}^ma_i^2x_i^0&=&L(X^0)\\
&\vdots\\
\sum\limits_{i=1}^ma_i^2x_i^{2d-1}&=&L(X^{2d-1})
\end{array}
\end{array}
\]

\begin{prop}\label{optex}
Suppose that $L$ possesses a quadrature rule with not more than $m$ nodes. Then $(Q_{L,f,m})$ has an optimal solution.
\end{prop}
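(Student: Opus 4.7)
The plan is a direct Weierstrass-style argument: exhibit a feasible point, restrict to a sublevel set of the objective, and use compactness.

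First I would produce a feasible point from the hypothesis. Let $w\colon\{y_1,\dots,y_r\}\to\R_{>0}$ be a quadrature rule for $L$ with $r\le m$ nodes, and set $\tilde a_i:=\sqrt{w(y_i)}$, $\tilde x_i:=y_i$ for $1\le i\le r$ and $\tilde a_i:=0$, $\tilde x_i:=\xi$ for $r<i\le m$. Then for each $k\in\{0,\dots,2d-1\}$,
\[
\sum_{i=1}^m\tilde a_i^2\tilde x_i^k=\sum_{i=1}^rw(y_i)y_i^k=L(X^k),
\]
so $(\tilde a,\tilde x)$ is feasible for $(Q_{L,f,m})$. Put $v_0:=f(\tilde x)\in\R$, which is finite since $f$ is continuous.

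Next I would show that the restricted set $F:=\{(a,x)\in\R^{2m}\mid(a,x)\text{ is feasible and }f(x)\le v_0\}$ is non-empty and compact. Non-emptiness is witnessed by $(\tilde a,\tilde x)$. For compactness, the $x$-component lies in $\{x'\in\R^m\mid f(x')\le v_0\}$, which is compact by the standing assumption on $f$. The $a$-component is bounded via the $k=0$ moment equation: since $L\in M_{2d-1}(\R)$, we have $L=L_{\mu,2d-1}$ for some measure $\mu$ on $\R$, so $\sum_{i=1}^m a_i^2=L(1)=\mu(\R)\ge 0$, giving $|a_i|\le\sqrt{L(1)}$. Hence $F$ is contained in a compact product set in $\R^{2m}$. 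The other constraints are polynomial equalities in $(a,x)$, and $f(x)\le v_0$ is a closed condition by continuity of $f$, so $F$ is closed and therefore compact.

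Applying the extreme value theorem to the continuous function $(a,x)\mapsto f(x)$ on the non-empty compact set $F$ produces a minimizer of $f$ over $F$, which is automatically a minimizer over the full feasible set (since feasible points outside $F$ have $f$-value exceeding $v_0$). I do not expect a main obstacle: the argument uses only the coercivity of $f$ (through compactness of its sublevel sets) and the $k=0$ moment equation to control the weights, while the finer axioms on the generalized gradient $\partial f$ play no role in the existence statement itself and will presumably enter only when Lagrange-type conditions at the optimizer are exploited.
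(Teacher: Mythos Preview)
Your proof is correct and follows essentially the same Weierstrass-type argument as the paper: construct a feasible point by padding a given quadrature rule with zero weights, then use the compactness of sublevel sets of $f$ together with closedness of the feasible region. Your version is in fact slightly more explicit than the paper's, which does not spell out the bound $\sum_i a_i^2=L(1)$ on the $a$-component; you supply this missing detail, and your observation that the gradient axioms on $\partial f$ are not needed here is also correct.
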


\begin{proof}
The feasible region of $(Q_{L,f,m})$ is non-empty by hypothesis: Indeed, choose a quadrature rule $w\colon\{x_1,\dots,x_\ell\}\to\R_{>0}$
for $L$ with $\#\{x_1,\dots,x_\ell\}=\ell\le m$. Set $a_i:=\sqrt{w(x_i)}$ for $i\in\{1,\dots,\ell\}$, $a_i:=0$ for $i\in\{\ell+1,\dots,m\}$ and choose $x_{\ell+1},\dots,x_m\in\R$ arbitrarily. Then $(a,x)\in\R^m\times\R^m$ is feasible for $(Q_{L,f,m})$. From our general assumption that
$\{x'\in\R^m\mid f(x')\le f(x)\}$ is compact, we see immediately that $(Q_{L,f,m})$ has an optimal solution because the objective
function of $(Q_{L,f,m})$ is continuous and the feasible region of $(Q_{L,f,m})$ {is} closed. 
\end{proof}

\begin{lem}\label{lemma1}
Let $(a,x)$ be an optimal solution for $(Q_{L,f,m})$ and $x\ne(\xi,\dots,\xi)$.
Then there is $h\in\R[X]_{2d-1}\setminus\{0\}$
such that for $i\in\{1,\dots,m\}$ with $a_i\ne0$
\begin{itemize}
\item $h(x_i)=0$,
\item $h'(x_i)\le0$ for all $i$ with $x_i<\xi$ and
\item $h'(x_i)\ge0$ for all $i$ with $x_i>\xi$.
\end{itemize}
\end{lem}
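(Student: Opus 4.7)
The plan is to derive $h$ from the multipliers produced by a nonsmooth Lagrange multiplier rule applied to $(Q_{L,f,m})$ at the optimum $(a,x)$, then arrange signs and rule out the degenerate $h=0$ case.

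First I would set $g_k(a,x):=\sum_{i=1}^m a_i^2x_i^k-L(X^k)$ for $k=0,\dots,2d-1$, so that the feasible region of $(Q_{L,f,m})$ is $\{g_0=\dots=g_{2d-1}=0\}$ in $\R^m\times\R^m$. Since the $g_k$ are smooth and the objective $F(a,x):=f(x)$ has Clarke subdifferential $\partial F(a,x)=\{0\}^m\times\partial f(x)$, the Fritz John form of Clarke's Lagrange multiplier theorem (e.g.\ \cite[Theorem~6.1.1]{cla}) supplies $\lambda_0\ge 0$, $\lambda_1,\dots,\lambda_{2d-1}\in\R$ not all zero, and $v\in\partial f(x)$ with
\[
\lambda_0(0,v)+\sum_{k=0}^{2d-1}\lambda_k\nabla g_k(a,x)=0.
\]
Computing $\nabla g_k$ componentwise gives $\partial g_k/\partial a_i=2a_ix_i^k$ and $\partial g_k/\partial x_i=ka_i^2x_i^{k-1}$, so reading off the $a_i$-and $x_i$-components and defining the polynomial
\[
h:=-\sum_{k=0}^{2d-1}\lambda_k X^k\in\R[X]_{2d-1}
\]
translates the stationarity into $a_ih(x_i)=0$ and $\lambda_0v_i=a_i^2h'(x_i)$ for every $i\in\{1,\dots,m\}$.

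Next I would show $h\ne0$ by splitting on $\lambda_0$. If $\lambda_0=0$, then by Fritz John nontriviality $(\lambda_1,\dots,\lambda_{2d-1})\ne0$, hence $h\ne0$, and moreover both $h(x_i)=0$ and $h'(x_i)=0$ hold whenever $a_i\ne0$, which is stronger than what the lemma demands. If $\lambda_0>0$, rescale to $\lambda_0=1$; then $v=a_i^2h'(x_i)\cdot e_i$ componentwise, and $h=0$ would force $v=0\in\partial f(x)$, contradicting the standing assumption on $f$ together with the hypothesis $x\ne(\xi,\dots,\xi)$. Hence $h\ne0$ in all cases.

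Finally I would verify the three listed properties. The identity $a_ih(x_i)=0$ gives $h(x_i)=0$ whenever $a_i\ne0$. For the sign conditions I use the axioms on $\partial f$: when $x_i<\xi$, $\pi_i(\partial f(x))\subseteq\R_{\le0}$, so $v_i\le0$ and $\lambda_0v_i=a_i^2h'(x_i)\le0$, giving $h'(x_i)\le0$ when $a_i\ne0$; the case $x_i>\xi$ is symmetric (the $\lambda_0=0$ branch is trivial since both sides vanish). The main obstacle is the correct invocation of the Fritz John version of Clarke's multiplier rule and the careful sign bookkeeping (the minus sign in the definition of $h$ is what produces the inequalities as stated); the rest is routine.
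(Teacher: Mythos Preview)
Your approach is the same as the paper's: apply Clarke's nonsmooth multiplier rule \cite[Theorem~6.1.1]{cla} at the optimum, assemble $h$ from the constraint multipliers, fix the sign of the objective multiplier, and read off the three conclusions from the $a_i$- and $x_i$-components together with the sign hypotheses on $\partial f$.

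There is, however, a bookkeeping slip that you should repair. You have $2d$ equality constraints $g_0,\dots,g_{2d-1}$ and one objective, so the Fritz John rule yields $2d+1$ scalars, not $2d$; in your displayed stationarity relation the symbol $\lambda_0$ is doing double duty as both the objective multiplier and the multiplier attached to $g_0$. Give the objective multiplier its own name (the paper uses $\lambda$, with $\lambda_0,\dots,\lambda_{2d-1}$ for the constraints) and the argument goes through verbatim: the $a_i$-row gives $a_ih(x_i)=0$, the $x_i$-row gives $\lambda v_i=a_i^2h'(x_i)$ with $\lambda\ge0$, and your $h\ne0$ case split becomes ``if $\lambda=0$ then nontriviality forces some $\lambda_k$ with $k\in\{0,\dots,2d-1\}$ to be nonzero, hence $h\ne0$; if $\lambda>0$ and $h=0$ then $v=0\in\partial f(x)$, contradicting $x\ne(\xi,\dots,\xi)$.''
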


\begin{proof}
By a nonsmooth version of the Lagrange multiplier method \cite[Theorem 6.1.1, Remark 6.1.2(iv)]{cla}, there exist multipliers
\[\la,\la_0,\dots,\la_{2d-1}\in\R\text{, not all zero,}\]
and $z\in\R^n$ with $z\in\partial f(x)$ such that
\[
\la
\begin{pmatrix}
0\\
\vdots\\
0\\
z_1\\
\vdots\\
z_n
\end{pmatrix}
+
\la_0
\begin{pmatrix}
2a_1x_1^0\\
\vdots\\
2a_nx_n^0\\
0\\
\vdots\\
0
\end{pmatrix}
+
\la_1
\begin{pmatrix}
2a_1x_1^1\\
\vdots\\
2a_nx_n^1\\
a_1^2\\
\vdots\\
a_n^2
\end{pmatrix}
+
\ldots
+
\la_{2d-1}
\begin{pmatrix}
2a_1x_1^{2d-1}\\
\vdots\\
2a_nx_n^{2d-1}\\
a_1^2(2d-1)x_1^{2d-2}\\
\vdots\\
a_n^2(2d-1)x_n^{2d-2}
\end{pmatrix}
=0.
\]
Without loss of generality $\la\le0$ (otherwise flip the signs).
Set $h:=\sum_{i=0}^{2d-1}\la_iX^i$. Then $h\ne0$ since otherwise $\la\ne0$ and therefore $0=z\in\partial f(x)$ which contradicts
$x\ne(\xi,\dots,\xi)$ by our general assumptions on $f$. The rest now follows from $\la\le0$ and our general assumptions on $\pi(\partial f(x))$.
\end{proof}

\begin{lem}\label{lemma2}
Suppose $h\in\R[X]_{2d-1}\setminus\{0\}$, $\ell\in\N_0$ and $x_1,\dots,x_\ell\in\R$ are pairwise distinct roots of $h$ such that
\begin{itemize}
\item $h'(x_i)\le0$ for all $i$ with $x_i<\xi$ and
\item $h'(x_i)\ge0$ for all $i$ with $x_i>\xi$.
\end{itemize}
Then $\ell\le d$.
\end{lem}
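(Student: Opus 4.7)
The plan is to prove the degree lower bound $\deg h \ge 2\ell - 2$. Combined with the hypothesis $\deg h \le 2d-1$, this gives $2\ell - 2 \le 2d - 1$, hence $\ell \le d + \tfrac12$; since $\ell \in \N_0$ this forces $\ell \le d$. (For $\ell \in \{0,1\}$ the lower bound is vacuous and the statement is immediate.)

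The key auxiliary object will be $g(X) := (X - \xi)\,h(X)$, a polynomial of degree at most $2d$. For every root $v_j$ in the list one has $g(v_j) = 0$, and since $h(v_j) = 0$ the product rule yields
\[
g'(v_j) \;=\; (v_j - \xi)\, h'(v_j) \;\ge\; 0
\]
by the sign hypothesis: both factors are nonpositive when $v_j < \xi$, both nonnegative when $v_j > \xi$, and the product vanishes when $v_j = \xi$. The inequality is strict exactly when $v_j$ is a simple root of $h$ with $v_j \ne \xi$; I will call such $v_j$ \emph{good}. At every other $v_j$---call them \emph{special}---either $v_j = \xi$ or $v_j$ is a multiple root of $h$, and in each case $g(v_j) = g'(v_j) = 0$, so $v_j$ is a root of $g$ of multiplicity at least $2$.

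Next I would analyse two consecutive good elements $v_j < v_{j'}$ in the sorted list. Since $g'(v_j) > 0$, the polynomial $g$ is positive just to the right of $v_j$; since $g'(v_{j'}) > 0$, it is negative just to the left of $v_{j'}$. The intermediate value theorem therefore forces an extra real zero of $g$ in $(v_j, v_{j'})$ not belonging to the list whenever no special lies strictly between $v_j$ and $v_{j'}$. If specials do lie between, a short case distinction on the parities of their multiplicities in $g$ shows that either their own sign-changing behaviour already reverses the sign of $g$ (in which case their extra multiplicity as roots of $g$ has already been booked) or an additional non-list zero of $g$ is still forced; in either sub-case the total root multiplicity of $g$ booked to the step from $v_j$ to $v_{j'}$ exceeds the two endpoint contributions by at least $1$.

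Summing the contributions---each good list element contributes $1$, each special contributes at least $2$, and each step between two consecutive goods contributes at least one additional unit of multiplicity to $\deg g$---the combinatorial bookkeeping yields $\deg g \ge 2\ell - 1$, so $\deg h \ge 2\ell - 2$, and the lemma follows. The main obstacle is precisely this bookkeeping step: when several specials cluster between two good list elements, one has to follow exactly how the sign of $g$ can travel from $+$ to $-$, ensuring that the surplus multiplicity contributed by the specials, together with any forced non-list zeros, always accumulates the required amount. The individual sign-change checks are elementary, but keeping them coherent over all configurations of goods and specials is where the care is needed.
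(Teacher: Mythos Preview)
Your approach is correct and genuinely different from the paper's. The paper argues by induction on $d$: ordering the $x_i$ and assuming $\ell\ge3$, it observes that either some $x_i$ is a double root of $h$, or $h'(x_1)<0$ and $h'(x_2)<0$, or $h'(x_{\ell-1})>0$ and $h'(x_\ell)>0$; in each case it divides $h$ by a suitable quadratic factor (either $(X-x_i)^2$ or $(X-x_1)(X-x)$ with an intermediate root $x$) and checks that the quotient still satisfies the sign hypotheses for the remaining $\ell-1$ points, so the induction hypothesis applies. Your idea of passing to $g=(X-\xi)h$ is neat because it unifies the two sign conditions into the single statement $g'(x_i)\ge0$, and then a direct root count replaces the induction.

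The bookkeeping you flag as the obstacle has a clean one-line resolution that avoids the case distinctions you outline. Order the good list elements as $u_1<\dots<u_G$. In each open interval $(u_j,u_{j+1})$ the polynomial $g$ passes from positive to negative, so the total multiplicity $m_j$ of its roots there is \emph{odd}; on the other hand the $s_j$ specials in that interval already force $m_j\ge 2s_j$, an even lower bound, whence $m_j\ge 2s_j+1$. Summing, and adding the simple roots at the $u_j$ together with the multiplicity $\ge2$ at each special outside $[u_1,u_G]$, gives
\[
\deg g\ \ge\ G+\sum_{j=1}^{G-1}(2s_j+1)+2S_{\text{out}}=2G-1+2S=2\ell-1,
\]
exactly what you need (the cases $G\le1$ are immediate). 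So your worry about ``several specials clustering between two goods'' dissolves: no parity-by-parity case analysis is required. Compared with the paper's proof, your argument is non-inductive and arguably more conceptual; the paper's version, on the other hand, uses only $h$ itself and makes the mechanism (dividing out quadratic factors) very explicit, which ties in with how the lemma is later used.
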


\begin{proof}
We proceed by induction on $d$. If $d=1$, then $h$ can have at most one root and therefore $\ell\le1=d$. Now let $d\ge2$
and suppose the statement is already proven for $d-1$ instead of $d$. Without loss of generality suppose that $x_1<\ldots<x_\ell$ and
$\ell\ge3$. Using the properties of $h$, it is now easy to see that at least one of the following cases applies:

\medskip\noindent
\emph{Case 1:} One of the $x_i$ is a double root of $h$. Then the $x_j$ with $j\ne i$ are roots of
\[\frac h{(X-x_i)^2}\in\R[X]_{2d-3}\setminus\{0\}\]
whose derivative
\[\frac{(X-x_i)^2h'-2h(X-x_i)}{(X-x_i)^4}\in\R[X]_{2d-4}\]
evaluated at $x_j$ with $j\ne i$ has the same sign as $h'(x_i)$ because $(x_j-x_i)^2>0$.
By induction hypothesis $\ell-1\le d-1$ whence $\ell\le d$.

\medskip\noindent
\emph{Case 2:} $h'(x_1)<0$ and $h'(x_2)<0$. By the intermediate value theorem there must exist another root $x\in\R$ of $h$ with
$x_1<x<x_2$. Now the $x_j$ with $j\ge2$ are roots of
\[\frac h{(X-x_1)(X-x)}\in\R[X]_{2d-3}\setminus\{0\}\]
whose derivative
\[\frac{(X-x_1)(X-x)h'-h(2X-x_1-x)}{(X-x_1)^2(X-x)^2}\in\R[X]_{2d-4}\]
evaluated at $x_j$ with $j\ge2$ has the same sign as $h'(x_j)$ because $x_j-x_1>0$ and $x_j-x>0$.
By induction hypothesis $\ell-1\le d-1$ whence $\ell\le d$.

\medskip\noindent
\emph{Case 3:} $h'(x_{\ell-1})>0$ and $h'(x_\ell)>0$. This case is analogous to the previous one.
\end{proof}

\begin{lem}\label{unilem}
Let $w\colon N\to\R_{>0}$ be a quadrature rule for $L$ and consider
\[\La\colon\R[X]_{2d}\to\R,\ p\mapsto\sum_{x\in N}w(x)p(x).\]
Then the following are equivalent:
\begin{enumerate}[(a)]
\item $\La$ is an optimal solution of $(P_{L,X^{2d},\R})$
\item There exists $h\in P_{2d}(\R)\setminus\{0\}$ such that $\La(h)=0$.
\item $\#N\le d$
\end{enumerate}
\end{lem}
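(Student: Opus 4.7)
The plan is to establish (b)$\Leftrightarrow$(c) directly, then (a)$\Rightarrow$(b) by a boundary argument, and finally (b)$\Rightarrow$(a) via uniqueness of low-node quadrature rules combined with the existence of an optimal solution furnished by Theorem~\ref{qt}.

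For (b)$\Rightarrow$(c): since $h\ge 0$ on $\R$, $w>0$, and $0=\La(h)=\sum_{x\in N}w(x)h(x)$, each $x\in N$ is a zero of $h$, and by Remark~\ref{nonn} a double zero; the nonzero polynomial $h\in\R[X]_{2d}$ thus has at most $d$ distinct real zeros, giving $\#N\le d$. Conversely, (c)$\Rightarrow$(b) by taking $h:=\prod_{x\in N}(X-x)^2\in P_{2d}(\R)\setminus\{0\}$, which has degree $2\#N\le 2d$ and vanishes on $N$.

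For (a)$\Rightarrow$(b): since $V_{2d}(\R)=\{0\}$, Proposition~\ref{hull} shows that $P_{2d}(\R)^*$ is full-dimensional in $\R[X]_{2d}^*$ and hence has nonempty interior. Any optimal $\La$ must lie on its boundary, for otherwise Remark~\ref{intdua} yields $\La(p)>0$ for every $p\in P_{2d}(\R)\setminus\{0\}$, and picking any $\De\in\R[X]_{2d}^*$ with $\De|_{\R[X]_{2d-1}}=0$ and $\De(X^{2d})=-1$, a standard compactness argument on a base of $P_{2d}(\R)$ shows that $\La+\epsilon\De$ remains in $P_{2d}(\R)^*$ for sufficiently small $\epsilon>0$ while strictly decreasing the objective, contradicting optimality. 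The supporting hyperplane theorem, following the template used in the proof of Theorem~\ref{qt}(b) (but now in the full ambient space since $V_{2d}(\R)^*=\R[X]_{2d}^*$), then produces $h\in P_{2d}(\R)\setminus\{0\}$ with $\La(h)=0$.

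For (b)$\Rightarrow$(a), I first observe that $L$ admits at most one quadrature rule with at most $d$ nodes: given two such $w_1,w_2$ on node sets $N_1,N_2$, extending both by zero outside their supports makes $w_1-w_2$ a signed weighting on $N_1\cup N_2$ of cardinality at most $2d=\dim\R[X]_{2d-1}$ that annihilates $\R[X]_{2d-1}$; since point evaluations at distinct real points are linearly independent in $\R[X]_{2d-1}^*$ by Lagrange interpolation, this forces $w_1=w_2$. Now Theorem~\ref{qt} (applicable since $\#\R=\infty>2d$ and $X^{2d}$ is positive at the unique point of $\R_\infty$) produces an optimal $\La^*$ of $(P_{L,X^{2d},\R})$ together with some $h^*\in P_{2d}(\R)\setminus\{0\}$ satisfying $\La^*(h^*)=0$; Proposition~\ref{primsol}(b) supplies a quadrature rule for $\La^*$ with all nodes in $\R$, and (b)$\Rightarrow$(c) applied to $\La^*$ shows it uses at most $d$ nodes. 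Our $\La$ is similarly given by the $\le d$ node rule $w$ via (b)$\Rightarrow$(c), so uniqueness forces the two rules to coincide and hence $\La=\La^*$ is optimal. The main obstacle is keeping the boundary / supporting-hyperplane step in (a)$\Rightarrow$(b) clean; the uniqueness step is elementary via Lagrange interpolation.
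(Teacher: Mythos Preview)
Your proof is correct, and the implications (b)$\Leftrightarrow$(c) coincide with the paper's. The other two implications, however, follow genuinely different routes.

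For (a)$\Rightarrow$(b), the paper splits cases: if some $p\in P_{2d-2}(\R)\setminus\{0\}$ already satisfies $L(p)=0$ it serves as $h$; otherwise Theorem~\ref{ct} (strong duality) produces $h$ with $h_{2d}=X^{2d}$. Your boundary/perturbation argument is more self-contained here, avoiding the case distinction and the appeal to~\ref{ct}. (The ``compactness on a base'' remark is unnecessary, though: once $\La$ lies in the interior of $P_{2d}(\R)^*$, openness alone keeps $\La+\epsilon\De$ inside for small $\epsilon$.)

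For (b)$\Rightarrow$(a), the paper gives a short direct argument that you may find instructive: any nonzero $h\in P_{2d}(\R)$ has even degree and positive leading coefficient, so after multiplying by a suitable $aX^{2k}$ one may assume $h=X^{2d}-q$ with $q\in\R[X]_{2d-1}$; then $q$ is dual feasible, and $\La(X^{2d})=\La(q)=L(q)$ shows the primal and dual objective values coincide, so weak duality forces optimality of $\La$. Your route---uniqueness of quadrature rules with at most $d$ nodes, combined with the existence of an optimal solution from Theorem~\ref{qt}---is correct but more circuitous, and in effect re-proves along the way the uniqueness statement that the paper isolates later as Theorem and Definition~\ref{eindeutig}. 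The paper's weak-duality trick is worth internalizing: it is the cleanest way to certify optimality in $(P_{L,X^{2d},\R})$.
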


\begin{proof}
\underline{(a)$\implies$(b)}\quad Suppose (a) holds. If there exists $p\in P_{2d-2}(\R)\setminus\{0\}$ with
$L(p)=0$ then $\La(p)=L(p)=0$ and we are done. Otherwise Theorem \ref{ct} guarantees the existence
of an $h\in P_{2d}(\R)$ with $h_{2d}=X^{2d}$ and $\La(h)=0$.

\smallskip\underline{(b)$\implies$(a)}\quad Suppose $h\in P_{2d}(\R)\setminus\{0\}$ satisfies $\La(h)=0$.
Then $h$ is of even degree of at most $2d$ and has a positive leading coefficient. By multiplying
$h$ with $aX^{2k}$ for a suitable $a\in\R_{>0}$ and $k\in\N_0$, we may assume that
$h=X^{2d}-q$ for some $q\in\R[X]_{2d-1}$. Now $\La$ is feasible for $(P_{L,X^{2d},\R})$ and
$q$ for its dual $(D_{L,X^{2d},\R})$. The objective values $\La(X^{2d})$ and $L(q)$ of these feasible
solutions are equal since $\La(X^{2d})=\La(q)=L(q)$.
By conic weak duality, (a) follows.

\smallskip\underline{(b)$\implies$(c)}\quad Let $h\in P_{2d}(\R)\setminus\{0\}$ with $\La(h)=0$.
Obviously, $N\subseteq Z(h)$. But we have $\#Z(h)\le d$ since each real root of $h$ is double by
Remark \ref{nonn}.

\smallskip\underline{(c)$\implies$(b)} is clear by considering $h:=\prod_{x\in N}(X-x)^2$.
\end{proof}

\begin{lem}\label{uniuni}
$(P_{L,X^{2d},\R})$ possesses exactly one optimal solution.
\end{lem}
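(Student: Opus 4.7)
The plan is to derive both existence and uniqueness from Lemma \ref{unilem}. For existence, I would apply Proposition \ref{primsol}(a) with $S = \R$ and $f = X^{2d}$: the only point of $S_\infty = \R\P^0$ is represented by $\pm 1 \in S^0$, and $f_{2d}(\pm 1) = 1 > 0$, so the hypothesis is met. Proposition \ref{primsol}(b) then gives that each optimal solution $\La_i$ lies in $M_{2d}(\R)$, i.e., possesses a genuine quadrature rule $w_i \colon N_i \to \R_{>0}$.

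For uniqueness, let $\La_1, \La_2$ be two optimal solutions and form $\La := \tfrac12(\La_1 + \La_2)$. Since the feasible set of $(P_{L,X^{2d},\R})$ is convex and the objective is linear, $\La$ is again feasible and optimal. By Lemma \ref{unilem}(b) there exists $h \in P_{2d}(\R) \setminus \{0\}$ with $\La(h) = 0$. The assignment $x \mapsto \tfrac12(w_1(x) + w_2(x))$, with each $w_i$ extended by $0$ outside $N_i$, defines a quadrature rule for $\La$ supported on $N_1 \cup N_2$. Substituting into $\La(h) = 0$, every term $\tfrac12(w_1(x)+w_2(x))h(x)$ is nonnegative (weights positive, $h \ge 0$ on $\R$) and they sum to zero, so $h$ vanishes on $N_1 \cup N_2$. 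By Remark \ref{nonn} each real root of $h$ is a double root, so $\#Z(h) \le d$ and hence $\#(N_1 \cup N_2) \le d$.

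The finish is a standard interpolation step: the difference $\La_1 - \La_2$ is a linear combination of the point evaluations $\ev_x$ at the $\le d$ distinct points $x \in N_1 \cup N_2$, and it vanishes identically on $\R[X]_{2d-1} \supseteq \R[X]_{d-1}$. Lagrange interpolation at $k \le d$ distinct real nodes shows that $\ev_{x_1}, \dots, \ev_{x_k}$ are linearly independent already on $\R[X]_{d-1}$, so all coefficients must be zero, forcing $\La_1 = \La_2$ on all of $\R[X]_{2d}$. The only delicate point in this plan is the collapse onto a common small support: one must check that even though $\La_1, \La_2$ may have different and separately bounded supports, the certificate $h$ attached to their midpoint $\La$ pins \emph{both} supports inside the single set $Z(h)$ of at most $d$ points. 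This is precisely what the positivity of weights together with Lemma \ref{unilem}(b) deliver, after which uniqueness becomes purely linear algebra.
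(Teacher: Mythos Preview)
Your argument is correct, but it is far more elaborate than needed. The paper's proof of uniqueness is a single observation: in one variable, $\R[X]_{2d}$ is spanned by $\R[X]_{2d-1}$ together with the single monomial $X^{2d}$, so any feasible $\La$ is completely determined by the constraint $\La|_{\R[X]_{2d-1}}=L$ and by its objective value $\La(X^{2d})$. Two optimal solutions share both, hence coincide. Existence is handled exactly as you do, via Proposition~\ref{primsol}(a).

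Your route --- forming the midpoint, invoking Lemma~\ref{unilem} to obtain a certificate $h$, bounding $\#(N_1\cup N_2)\le d$, and finishing with linear independence of point evaluations --- is sound and self-contained, and in fact it is close in spirit to the argument the paper gives later for Theorem~\ref{eindeutig} (uniqueness of the Gaussian quadrature rule). But for the present lemma you are working much harder than necessary: you never use the key structural simplification that the one-variable feasible set is an affine \emph{line} in $\R[X]_{2d}^*$ (parametrized by the single value $\La(X^{2d})$), on which a linear objective can obviously have at most one minimizer. Recognizing this makes the lemma immediate and avoids appealing to Lemma~\ref{unilem}, Proposition~\ref{primsol}(b), and the interpolation argument altogether.
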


\begin{proof}
The existence is immediate from Proposition \ref{primsol}(a). The uniqueness follows from
the fact that we are in the one variable case: Any feasible solution $\La$ of $(P_{L,X^{2d},\R})$ is
of course determined by its objective value $\La(X^{2d})$ and by $\La|_{\R[X]_{2d-1}}=L$.
\end{proof}

\bigskip\noindent
Before we proceed, we have to catch up on introducing Gaussian quadrature.
The existence part of the following theorem is folklore and has been reproven in Examples \ref{uniex} and \ref{gaussex} above.
The uniqueness part should be well-known although we could not find a reference for the statement in its full generality in the literature.
Indeed, typical unnecessary assumptions on a measure $\mu$ on $\R$ with $L=L_{\mu,2d-1}$ often made in the literature are
that the support of $\mu$ is compact, that all moments of $\mu$ exist and are finite and that $\mu$ possesses a density with respect to
the Lebesgue measure. We therefore give a new self-contained proof for this result.

\begin{thmdf}\label{eindeutig}
There is exactly one quadrature rule $w\colon N\to\R_{>0}$ for $L$ with $\#N\le d$.
We call this quadrature rule the \emph{Gaussian quadrature rule} for $L$.
\end{thmdf}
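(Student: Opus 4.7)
The plan is to reduce the existence and uniqueness to the facts about the conic program $(P_{L,X^{2d},\R})$ already proved in Example~\ref{uniex} and Lemmas~\ref{unilem} and~\ref{uniuni}.

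For existence, I would proceed as follows. By Proposition~\ref{primsol}(a), $(P_{L,X^{2d},\R})$ possesses an optimal solution, and by Lemma~\ref{uniuni} this optimal solution is unique; call it $\La^*$. Since $X^{2d}$ has positive leading form (trivially on $\R\P^0=\emptyset$, so the hypothesis of Proposition~\ref{primsol}(b) is vacuously satisfied), Proposition~\ref{primsol}(b) yields a quadrature rule $w\colon N\to\R_{>0}$ for $\La^*$. By the implication (a)$\Rightarrow$(c) of Lemma~\ref{unilem}, $\#N\le d$. Restricting to $\R[X]_{2d-1}$, $w$ is a quadrature rule for $L=\La^*|_{\R[X]_{2d-1}}$ with at most $d$ nodes.

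For uniqueness, suppose $w_i\colon N_i\to\R_{>0}$ ($i=1,2$) are two quadrature rules for $L$ with $\#N_i\le d$. Extend each to a linear form on $\R[X]_{2d}$ by
\[\La_i\colon\R[X]_{2d}\to\R,\quad p\mapsto\sum_{x\in N_i}w_i(x)p(x).\]
Then $\La_i|_{\R[X]_{2d-1}}=L$ and $\La_i\in M_{2d}(\R)\subseteq P_{2d}(\R)^*$, so $\La_i$ is feasible for $(P_{L,X^{2d},\R})$. Since $\La_i$ admits a quadrature rule with $\#N_i\le d$ nodes, the implication (c)$\Rightarrow$(a) of Lemma~\ref{unilem} shows that each $\La_i$ is optimal. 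By Lemma~\ref{uniuni}, $\La_1=\La_2$.

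It remains to deduce $w_1=w_2$ from the equality of the associated linear forms on $\R[X]_{2d}$. Set $N:=N_1\cup N_2$ and extend each $w_i$ by zero to a function $\tilde w_i\colon N\to\R_{\ge0}$. Then $\sum_{x\in N}(\tilde w_1(x)-\tilde w_2(x))p(x)=0$ for all $p\in\R[X]_{2d}$. Because $\#N\le 2d<2d+1=\dim\R[X]_{2d}$, Lagrange interpolation produces, for each $x_0\in N$, a polynomial in $\R[X]_{2d}$ that equals $1$ at $x_0$ and vanishes at every other element of $N$; evaluating the identity above on such polynomials yields $\tilde w_1(x_0)=\tilde w_2(x_0)$. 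Hence $\tilde w_1=\tilde w_2$, from which $N_1=\{x\mid\tilde w_1(x)>0\}=\{x\mid\tilde w_2(x)>0\}=N_2$ and $w_1=w_2$ follow.

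There is no genuine obstacle: the only point needing a small verification is the linear independence of the evaluation functionals at the $\le 2d$ points of $N_1\cup N_2$, which is handled by the Vandermonde/Lagrange interpolation argument just sketched. The rest is a straightforward assembly of results already in place.
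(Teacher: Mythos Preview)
Your argument is correct and follows essentially the same route as the paper, reducing both existence and uniqueness to the unique optimal solution of $(P_{L,X^{2d},\R})$ via Lemmata~\ref{unilem} and~\ref{uniuni}, then finishing with linear independence of point evaluations. One small slip: $\R\P^0$ is not empty but consists of a single point, so the hypothesis of Proposition~\ref{primsol} is satisfied not vacuously but because $X^{2d}$ evaluates to $1>0$ there. The only cosmetic difference from the paper is in the final step: the paper first confines all nodes to the zero set $Z(h)$ (at most $d$ points) of the certificate $h$ from Lemma~\ref{unilem}(b) and interpolates in $\R[X]_d$, whereas you work directly with $N_1\cup N_2$ (at most $2d$ points) and interpolate in $\R[X]_{2d}$.
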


\begin{proof}
Denote the unique optimal solution of $(P_{L,X^{2d},\R})$ that exists due to Lemma \ref{uniuni}
by $\La$. By Lemma \ref{unilem}, we can choose $h\in P_{2d}(\R)\setminus\{0\}$ with
$\La(h)=0$. We have again $\#Z(h)\le d$ since each real root of $h$ is double by
Remark \ref{nonn}. Now it suffices to show that each quadrature rule for $L$ with at most $d$ nodes
has all its nodes contained in $Z(h)$ since the point evaluations $\ev_x\colon\R[X]_d\to\R,\ x\mapsto p(x)$
($x\in Z(h)$) are linearly independent (e.g., by an argument similar to the one in the proof of Lemma \ref{lemlu}). But if $w$ is such a quadrature rule for $L$, then
$w$ is also one for $\La$ by Lemmata \ref{unilem} and \ref{uniuni}.
\end{proof}

\noindent
Now we show that the optimization problem $(Q_{L,f,m})$ leads to the just introduced Gaussian quadrature rule for $L$.

\begin{thm}\label{thmgauss}
Let $(a,x)$ be an optimal solution for $(Q_{L,f,m})$ and set
\[N:=\{x_i\mid i\in\{1,\dots,m\},a_i\ne0\}.\]
Then
\[w\colon N\to\R_{>0},\ x\mapsto\sum_{\substack{i=1\\x_i=x}}^ma_i^2\]
is the Gaussian quadrature rule for $L$.
\end{thm}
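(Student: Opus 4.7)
The plan is to reduce the theorem to a combination of Theorem/Definition~\ref{eindeutig} together with the two preparatory Lemmas \ref{lemma1} and \ref{lemma2} that immediately precede it. The first observation is that $w$ really is a quadrature rule for $L$: by construction every element of $N$ is some $x_i$ with $a_i\ne 0$, so $w(x)>0$ for every $x\in N$, and the feasibility constraints of $(Q_{L,f,m})$ translate directly into
\[
\sum_{x\in N} w(x)\,x^k \;=\; \sum_{i=1}^m a_i^2 x_i^k \;=\; L(X^k)\qquad(k=0,\dots,2d-1).
\]
By Theorem and Definition~\ref{eindeutig}, any quadrature rule for $L$ with at most $d$ nodes must coincide with the Gaussian quadrature rule. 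Hence it suffices to show $\#N\le d$.

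To bound $\#N$, I would do a short case distinction. If $x=(\xi,\dots,\xi)$, then $N\subseteq\{\xi\}$, so $\#N\le 1\le d$ and we are done. Otherwise $x\ne(\xi,\dots,\xi)$, and Lemma~\ref{lemma1} produces a non-zero $h\in\R[X]_{2d-1}$ such that for every $i$ with $a_i\ne 0$ we have $h(x_i)=0$, together with the sign conditions $h'(x_i)\le 0$ when $x_i<\xi$ and $h'(x_i)\ge 0$ when $x_i>\xi$. Now enumerate the elements of $N$ as pairwise distinct real numbers $y_1,\dots,y_\ell$; each $y_j$ is of the form $x_i$ for some $i$ with $a_i\ne 0$, so each $y_j$ is a root of $h$ and satisfies the corresponding sign condition on $h'$. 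Applying Lemma~\ref{lemma2} to $h$ and $y_1,\dots,y_\ell$ yields $\#N=\ell\le d$, as required.

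Combining the two paragraphs, $w$ is a quadrature rule for $L$ with at most $d$ nodes, and the uniqueness clause of Theorem and Definition~\ref{eindeutig} identifies it with the Gaussian quadrature rule for $L$.

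There is no real obstacle: the nontrivial work is already encapsulated in Lemma~\ref{lemma1} (the nonsmooth Lagrange multiplier argument producing $h$) and Lemma~\ref{lemma2} (the inductive bound on the number of roots of a univariate polynomial subject to sign conditions on its derivative). The only small care needed is to pass from the possibly repeated list $(x_1,\dots,x_m)$ to the distinct nodes in $N$ when invoking Lemma~\ref{lemma2}, and to handle the degenerate case $x=(\xi,\dots,\xi)$ separately, since Lemma~\ref{lemma1} is only stated under the hypothesis $x\ne(\xi,\dots,\xi)$.
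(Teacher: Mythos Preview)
Your proof is correct and follows exactly the same route as the paper: verify that $w$ is a quadrature rule for $L$, then invoke Lemmata~\ref{lemma1} and~\ref{lemma2} to conclude $\#N\le d$, and finish with the uniqueness in Theorem and Definition~\ref{eindeutig}. Your version is in fact slightly more careful than the paper's, which simply asserts that the bound ``follows from Lemmata~\ref{lemma1} and~\ref{lemma2}'' without spelling out the degenerate case $x=(\xi,\dots,\xi)$ or the passage from the possibly repeated list $(x_1,\dots,x_m)$ to the pairwise distinct nodes required by Lemma~\ref{lemma2}.
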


\begin{proof}
We observe that $w$ is a quadrature rule for $L$ since
\[\sum_{x\in N}w(x)x^i=\sum_{x\in N}\sum_{\substack{i=1\\x_i=x}}^ma_i^2x^i=\sum_{i=1}^na_i^2x^i=L(X^i)\]
for all $i\in\{0,\dots,2d-1\}$. According to Definition \ref{eindeutig}, it remains to show that $\#N\le d$.
But this follows from Lemmata \ref{lemma1} and \ref{lemma2}.
\end{proof}

\noindent
The main result of this section is now the following characterization of the Gaussian quadrature rule for $L$.
In disguised and slightly less general form, characterizations (b) and (d) appear already in the literature:
Characterization (b) appears in \cite[Theorem 2.2.3(1)]{ds} and much more explicitly in \cite[Theorem 1]{rb}.
Characterization (d) appears with a completely different proof in the first paragraph of the note on Page 497 in \cite{dst}.
To the best of our knowledge, characterization (c) seems to be previously unknown despite the fact that Gaussian quadrature rules have been
investigated for over two hundred years now.
Note also that one could easily add a lot of other equivalent new characterizations by choosing various other
penalty functions $f$.

\begin{thm}\label{finthm}
Let $w\colon N\to\R_{>0}$ be a quadrature rule for $L$ and $\al\in\R_{\ge1}$. Then the following are
equivalent:
\begin{enumerate}[(a)]
\item $w$ is the (unique) Gaussian quadrature rule for $L$.
\item $w$ minimizes $\sum_{x\in N}w(x)(x-\xi)^{2d}$ amongst all quadrature rules for $L$.
\item $N$ minimizes $\sum_{x\in N}|x-\xi|^\al$ amongst all {node sets of} quadrature rules for $L$.
\item $N$ minimizes $\max_{x\in N}|x-\xi|$ amongst all {node sets of} quadrature rules for $L$.
\end{enumerate}
\end{thm}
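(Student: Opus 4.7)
The plan is to deduce (a) $\Leftrightarrow$ (b) from the conic machinery of Section~\ref{conic} and to deduce (a) $\Leftrightarrow$ (c) and (a) $\Leftrightarrow$ (d) from Theorem~\ref{thmgauss} applied to three specific penalty functions.

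For (a) $\Leftrightarrow$ (b) I would work with $(P_{L,(X-\xi)^{2d},\R})$. Writing $(X-\xi)^{2d} = X^{2d} + r$ with $r \in \R[X]_{2d-1}$, every feasible $\Lambda$ satisfies $\Lambda((X-\xi)^{2d}) = \Lambda(X^{2d}) + L(r)$, so $(P_{L,(X-\xi)^{2d},\R})$ and $(P_{L,X^{2d},\R})$ share their optimizers. Any quadrature rule $w\colon N\to\R_{>0}$ for $L$ yields a feasible solution $\Lambda_w := \sum_{x\in N}w(x)\ev_x \in M_{2d}(\R)$ of the latter with objective value $\sum_{x\in N}w(x)(x-\xi)^{2d}$, and by Proposition~\ref{primsol}(b) every feasible solution arises this way. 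Hence (b) says precisely that $\Lambda_w$ is optimal for $(P_{L,X^{2d},\R})$, which by Lemma~\ref{unilem} is equivalent to $\#N \leq d$, and by Theorem and Definition~\ref{eindeutig} to $w$ being the Gaussian rule.

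For (a) $\Leftrightarrow$ (c) and (a) $\Leftrightarrow$ (d) I would apply Theorem~\ref{thmgauss} to $f_\alpha(y) := \sum_{i=1}^m|y_i-\xi|^\alpha$ and $g(y) := \max_{1\leq i\leq m}|y_i-\xi|$, respectively, which are exhibited in the discussion preceding $(Q_{L,f,m})$ as satisfying the standing hypotheses of this section. Any quadrature rule $u\colon M\to\R_{>0}$ for $L$ with $\#M \leq m$ lifts to a feasible point of $(Q_{L,f,m})$ by setting $a_i := \sqrt{u(y_i)}$ on an enumeration of $M$ and padding the remaining indices with $(a_j,y_j) := (0,\xi)$; these ghost entries leave the moment constraints untouched and contribute $0$ to both penalty functions, so the lift has value $\sum_{y\in M}|y-\xi|^\alpha$ (resp.\ $\max_{y\in M}|y-\xi|$). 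Let $w_G$ denote the Gaussian rule with node set $N_G$. For any quadrature rule $w\colon N\to\R_{>0}$ for $L$ and $m := \max(\#N_G,\#N,d)$, both $w_G$ and $w$ lift feasibly; Proposition~\ref{optex} supplies an optimum of $(Q_{L,f_\alpha,m})$, whose induced quadrature rule (in the sense of Theorem~\ref{thmgauss}) must be $w_G$. Since $f_\alpha$ can be further decreased by relocating ghost coordinates to $\xi$, the optimum actually places them there, giving optimal value $\sum_{y\in N_G}|y-\xi|^\alpha$ and hence $\sum_{y\in N}|y-\xi|^\alpha \geq \sum_{y\in N_G}|y-\xi|^\alpha$; this is (a) $\Rightarrow$ (c). Conversely, if $w$ attains the minimum in (c), its lift (with $m := \max(\#N,d)$) is itself optimal for $(Q_{L,f_\alpha,m})$, so Theorem~\ref{thmgauss} identifies its induced rule with $w_G$; since $w$ has only positive weights, the induced rule equals $w$, whence $w = w_G$. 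The verbatim argument with $g$ in place of $f_\alpha$ yields (a) $\Leftrightarrow$ (d).

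The chief obstacle is purely bookkeeping: reconciling the fixed-$m$ framework of Theorem~\ref{thmgauss} with the unrestricted quantification over quadrature rules in Theorem~\ref{finthm}. Padding with zero-weight nodes anchored at $\xi$ bridges the gap cleanly, since such nodes neither perturb the moment constraints of $(Q_{L,f,m})$ nor contribute to any of the penalty expressions in (b), (c), or (d).
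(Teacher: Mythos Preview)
Your argument is correct and aligns with the paper's own proof: (a)$\Leftrightarrow$(b) via the conic problem $(P_{L,X^{2d},\R})$, and (a)$\Leftrightarrow$(c), (a)$\Leftrightarrow$(d) by lifting quadrature rules into $(Q_{L,f,m})$ and invoking Theorem~\ref{thmgauss}. Your route to (a)$\Leftrightarrow$(b) through Lemma~\ref{unilem} is in fact slightly tidier than the paper's, which appeals to Theorem~\ref{genthm} and therefore must first dispose separately of the degenerate case where some nonzero $p\in P_{2d-2}(\R)$ satisfies $L(p)=0$; and your padding-at-$\xi$ bookkeeping for (c) and (d) makes explicit what the paper compresses into the phrase ``this follows easily from Proposition~\ref{optex} and Theorem~\ref{thmgauss}''.

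One slip to fix: Proposition~\ref{primsol}(b) does \emph{not} say that every feasible $\La$ for $(P_{L,X^{2d},\R})$ arises as $\La_w$ for a quadrature rule $w$---this is false, since $P_{2d}(\R)^*$ strictly contains $M_{2d}(\R)$ (a generalized quadrature rule with a node at infinity gives a feasible $\La$ not of the form $\La_w$). What Proposition~\ref{primsol}(b) actually says, and what suffices for you, is that every \emph{optimal} $\La$ arises this way. Together with the trivial fact that every $\La_w$ is feasible, this already gives that the optimal value of $(P_{L,X^{2d},\R})$ equals the infimum of $\La_w(X^{2d})$ over quadrature rules $w$, and hence that (b) is equivalent to optimality of $\La_w$; so your logic survives once ``feasible'' is replaced by ``optimal''.
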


\begin{proof} Treat first the easy case where there exists $p\in P_{2d-2}(\R)\setminus\{0\}$ such that $L(p)=0$.
Then by Remark \ref{nonn}, each root of $p$ is a double root. Therefore $\#Z(p)\le d-1$. Since each quadrature rule for $L$ has
all its nodes contained in $\#Z(p)$, Theorem and Definition \ref{eindeutig} imply that there is only one quadrature rule for $L$ namely
the Gaussian one. The conditions (a)--(d) are therefore trivially all satisfied.

From now on we therefore assume that there is no $p\in P_{2d-2}(\R)\setminus\{0\}$ such that $L(p)=0$. Moreover, we replace
condition (b) by the condition
\begin{enumerate}[(a)]
\item[(b')] $w$ minimizes $\sum_{x\in N}w(x)x^{2d}$ amongst all quadrature rules for $L$
\end{enumerate}
which is equivalent since
\[\sum_{x\in N}w(x)((x-\xi)^{2d}-x^{2d})=\sum_{i=0}^{2d-1}\binom{2d}i(-\xi)^{2d-i}L(X^i)\]
depends only on $L$.

By Theorem and Definition \ref{eindeutig}, it suffices now to show that there exists a quadrature rule $w\colon N\to\R_{>0}$
minimizing the respective penalty function from (b'), (c) and (d) and that for each such one has $\#N\le d$.
For (b') this follows from Theorem \ref{genthm} together with Example \ref{rho1} (cf. also Examples \ref{uniex} and \ref{gaussex}).
For (c) and (d) this follows easily from Proposition \ref{optex} and Theorem \ref{thmgauss} choosing for $m$ the number of nodes
of any quadrature rule for $L$ and and making the choices of $f$ proposed at the beginning of the section.
\end{proof}

\begin{rem}\label{flattened}
Note that this section provides also an interesting new elementary proof for the existence of the Gaussian quadrature rule for $L$ starting from
the existence of an arbitrary quadrature rule with $m$ nodes: Take as penalty function for example
$f\colon\R^m\to\R,\ x\mapsto\sum_{i=1}^mx_i^2$. Then in Lemma 1 the usual Lagrange multiplier rule can be applied instead of
nonsmooth analysis. Moreover, since $f$ satisfies (in the ``nonsmooth notation'') even $\pi(\partial f(x))\subseteq\R_{>0}$
(instead of just $\pi(\partial f(x))\subseteq\R_{\ge0}$) for all $x\in\R^n$ with $x_i>0$, and the corresponding
symmetric condition, also Case 1 in the proof of Lemma \ref{lemma2}
is no longer needed. Indeed, the proof of Lemma 2 becomes then clear already from drawing a picture. We leave it as an
exercise to the reader to formulate the proof of Theorem \ref{thmgauss} in a short and elementary way accessible to undergraduate students
for this choice of $f$.
\end{rem}

%********************************************************************************
% Section: Relation to previous works
%********************************************************************************

\section{Relation to previous works}\label{previous}

\noindent
While we were working on this article, we realized that the basic idea behind the optimization problem $(P_{L,f,S})$ from Page \pageref{POP} is already present in the literature, namely in the book \cite{ds} of Dette and Studden as well as in the recent work
\cite{rb} of Ryu and Boyd. In this section, we would like to clarify the relation of our work to the work of these authors.

\bigskip\noindent
In the theory of canonical moments for measures on the real line initiated by Skibinsky \cite{ski} and developed and exposed by Dette
und Studden in \cite{ds}, optimization problems very similar to $(P_{L,X^{2d},\R})$ and variations of it with
\begin{itemize}
\item odd instead of even degree objective moment and
\item maximization instead of minimization
\end{itemize}
play a central role, see \cite[Preface]{ds}. The equivalence of (a) and (b) in
Theorem~\ref{finthm} can be easily deduced from \cite[Theorem 2.2.3(1)]{ds},
at least if $L$ comes from a measure whose support is
contained in a compact interval. In sharp contrast to our work, the techniques of Dette and Studden are, however,
limited to the one-dimensional case and use ideas from the classical theory of orthogonal polynomials.

\bigskip\noindent
Ryu and Boyd consider optimization problems very similar to $(P_{L,f,S})$. Whereas we minimize over the \emph{finite-dimensional}
cone $P_{2d}(S)^*$, they minimize, however, over the \emph{infinite-dimensional} cone of measures whose support lies in $S$.

\bigskip\noindent
Obviously, our framework would be closer to the setup of Ryu and Boyd if we had chosen to minimize over the cone $M_{2d}(S)$
which is the appropriate finite-dimensional projection of the cone Ryu and Boyd work with. It is easy to see that this would also have
been a possible way to go since $P_{2d}(S)^*$ is the closure of $M_{2d}(S)$ by Proposition \ref{momclo} and under very mild
assumptions each optimal solution of $(P_{L,f,S})$ lies in $M_{2d}(S)$ anyway by Proposition \ref{primsol}.
That we chose to formulate $(P_{L,f,S})$ with the cone $P_{2d}(S)^*$ instead of
$M_{2d}(S)$ has at least two advantages: First, we can then directly apply the duality theory of conic programming
(for closed cones which happily are not required to have interior in Renegar's book \cite{ren}) to $(P_{L,f,S})$. Second,
one can directly implement $(P_{L,f,S})$ in certain (but rather rare) cases as a semidefinite program \cite[4.6.2]{bv}
using so-called moment matrices
\cite[Subsection 4.1]{la1,la2}. These cases
include
\begin{itemize}
\item $S=\R$,
\item $d=1$, and
\item $S=\R^2$ and $d=2$
\end{itemize}
since in each of these cases $P_{2d}(S)$ consists of sums of squares of polynomials by Hilbert's 1888 theorem (see \cite{hil} or
\cite[Theorem 3.4]{la1,la2}) the most trivial part of which we have proven in Proposition \ref{sos2}. A related but different approach to computing quadrature rules with
semidefinite programming has recently been found in \cite{abm}.

\bigskip\noindent
The first real difference between our work and the work of Ryu and Boyd is, however, that we minimize over a \emph{finite-dimensional cone}
(which is $P_{2d}(S)^*$ but could also have been the finite-dimensional projection $M_{2d}(S)$ of their cone). Whereas we extend
$M_{2d}(S)$ to $P_{2d}(S)^*$ in order to be able to apply the duality theory of conic programming, they have to
extend the cone of Borel measures to the cone of \emph{Borel charges} \cite[Section 10]{ab} (which are finitely additive but not necessarily countably
additive) in order to use infinite-dimensional
Lagrange duality. {These}
arguments require rather deep facts from infinite-dimensional analysis like \cite[Theorems 4 and 5]{rb} whereas
we use elementary facts from finite-dimensional convex analysis instead.

\bigskip\noindent
The main difference between our work and \cite{rb} is again that their \emph{theoretical} results are
essentially again restricted to the equivalence of (a) and (b) in Theorem \ref{finthm} and in particular do not concern the multivariate
case. Ryu and Boyd reveal, however, the
advantages and the practical relevance of the optimization theoretic viewpoint and they conduct numerical experiments in
the multi-dimensional case by finding linear programs that approximate their optimization problem. Translated into
our framework, what they do is that they try to exchange $P_{2d}(S)^*$ by $P_{2d}(G)^*$ where $G$ is a fine grid inside $S$
(which is in practice only possible for low-dimensional compact $S$).
Since $G$ is finite, $P_{2d}(G)^*$ is obviously a polyhedron and $(P_{L,f,G})$ is a linear program. It turns out that this yields in
practice quadrature rules whose nodes are still numerous but usually seem to form only a few clusters inside the grid $G$.
Ryu and Boyd explain how these clusters can in practice
often be merged to a single node no longer necessarily
contained in the grid $G$ without changing the prescribed moments using sequential quadratic programming. However, they do not give a
theoretical upper bound on the number of nodes obtained. Indeed, in the example given in \cite[Subsection 5.2, Figure 2]{rb} they obtain
a quadrature rule of degree $5$ (i.e., $d=3$ so that $2d-1=5$) with $12$ nodes for a measure on $\R^2$ (i.e., $n=2$)  although we
know by our Theorem \ref{planethm} that $\frac32d(d-1)+1=10$ nodes would suffice.
Since in this example there are three pairs of very close
nodes, one could, however, speculate if $9$ nodes would perhaps be sufficient but were not found by the heuristic of Ryu and Boyd.

\bigskip\noindent
Ryu and Boyd begin to discuss how the method relates to the idea of obtaining sparse solutions by $\ell_1$-norm minimization
underlying methods like compressed sensing. This interesting question should now also be viewed in the light of Theorem \ref{finthm}.

\section*{Note added in proof}
\noindent
{
While this paper was in review, di Dio and Schmüdgen released their related preprint \cite{dds}. In \cite[Theorem 57]{dds},
they show in our language that each linear form in
$P_{2d}(\R^2)^*\supseteq M_{2d}(\R^2)$ possesses a \emph{generalized} quadrature rule with at most
$\frac32d(d+1)+1$ many nodes. It is interesting to compare this to our Corollary~\ref{planecor}. Whereas we treated the odd degree
case and we put a lot of energy in avoiding nodes at infinity, di Dio and Schmüdgen treat the even degree case and allow nodes at
infinity. In the odd degree case discussed in this work, they proved however new interesting lower bounds \cite[Theorem 27 and 53]{dds}.
We included this in our discussion in Section \ref{plane} shortly before this article went into print.
}

\section*{Acknowledgments}
\noindent
Important ideas towards this work have been worked out at the authors' stay at the
Issac Newton Institute during the Programme on Polynomial Optimisation in 2013,
the second author's visit at the Aalto Science Institute in 2014 and
the first author's postdoctoral fellowship at the Fields Institute during the
Thematic Program on Computer Algebra in 2015.
The authors thank all these institutions for their generous hospitality and support.
Finally, the authors would like to thank Greg Blekherman,
Erwan Brugallé, Jean Lasserre, Antonio Lerario, Mihai Putinar, {Johannes Rau}, Bruce Reznick, Konrad Schmüdgen and Ronan Quarez
for interesting discussions on this topic.

\end{document}